\newtheorem{theorem}             {Theorem}  [section]
\newtheorem{definition} [theorem] {Definition}
\newtheorem{lemma}      [theorem]{Lemma}
\newtheorem{corollary}  [theorem]{Corollary}
\newtheorem{proposition}[theorem]{Proposition}
\newtheorem{remark} [theorem] {Remark}
\numberwithin{equation}{section} \everymath{\displaystyle}
\newcommand{\sgn}{{\rm sgn}}
\newcommand{\intL}{{\rm L}}
\newcommand{\Nr}{{\rm Nr}}
\newcommand{\Tr}{{\rm Tr}}
\newcommand{\Ram}{{\rm Ram}}
\newcommand{\gp}[1]{\mathbf{#1}}
\newcommand{\GL}{{\rm GL}}
\newcommand{\PGL}{{\rm PGL}}
\newcommand{\SL}{{\rm SL}}
\newcommand{\SO}{{\rm SO}}
\newcommand{\SU}{{\rm SU}}
\newcommand{\ag}[1]{\mathbb{#1}}
\newcommand{\N}{\mathbb{N}}
\newcommand{\Z}{\mathbb{Z}}
\newcommand{\B}{\mathbb{H}}
\newcommand{\Mat}{{\rm M}}
\newcommand{\IntP}[1]{\left\lfloor #1 \right\rfloor}
\newcommand{\IntCP}[1]{\left\lceil #1 \right\rceil}
\newcommand{\Casimir}{\mathcal{C}}
\newcommand{\Q}{\mathbb{Q}}
\newcommand{\R}{\mathbb{R}}
\newcommand{\C}{\mathbb{C}}
\newcommand{\E}{\mathbf{E}}
\newcommand{\F}{\mathbf{F}}
\newcommand{\bL}{\mathbf{L}}
\newcommand{\A}{\mathbb{A}}
\newcommand{\VO}{\mathfrak{O}}
\newcommand{\vO}{\mathcal{O}}
\newcommand{\vo}{\mathfrak{o}}
\newcommand{\vP}{\mathcal{P}}
\newcommand{\vp}{\mathfrak{p}}
\newcommand{\oJ}{\mathfrak{J}}
\newcommand{\oM}{\mathfrak{M}}
\newcommand{\Dis}{{\rm D}}
\newcommand{\Proj}{{\rm P}}
\newcommand{\norm}[1][\cdot]{\lvert #1 \rvert}
\newcommand{\extnorm}[1]{\left\lvert #1 \right\rvert}
\newcommand{\Norm}[1][\cdot]{\lVert #1 \rVert}
\newcommand{\extNorm}[1]{\left\lVert #1 \right\rVert}
\newcommand{\Pairing}[2]{\langle #1, #2 \rangle}
\newcommand{\extPairing}[2]{\left\langle #1, #2 \right\rangle}
\newcommand{\Legendre}[2]{\left( \frac{#1}{#2} \right)}
\newcommand{\Bas}{\mathcal{B}}
\newcommand{\Hom}{{\rm Hom}}
\newcommand{\Res}{{\rm Res}}
\newcommand{\Ind}{{\rm Ind}}
\newcommand{\cInd}{{\rm c-Ind}}
\newcommand{\Intw}{\mathcal{M}}
\newcommand{\IntwR}{\mathcal{R}}
\newcommand{\Cond}{\mathbf{C}}
\newcommand{\cond}{\mathfrak{c}}
\newcommand{\cusp}{{\rm cusp}}
\newcommand{\Eis}{{\rm Eis}}
\newcommand{\fin}{{\rm fin}}
\newcommand{\eis}{{\rm E}}
\newcommand{\JL}{\mathbf{JL}}
\newcommand{\Vol}{{\rm Vol}}
\newcommand{\rmnum}[1]{\romannumeral #1}
\newcommand{\Rmnum}[1]{\expandafter\@slowromancap\romannumeral #1@}
\newcommand{\fa}{\mathbf{a}}
\newcommand{\fb}{\mathbf{b}}
\title{Subconvex bounds for compact toric integrals}
\author{Han Wu}
\thanks{Research partially supported by DFG-SNF-grant 00021L\_153647}
\begin{document}

\begin{abstract}
	We generalize our method for subconvex bounds for $\GL_2 \times \GL_1$ to the setting of the Waldspurger's formula for compact torical integrals. We address the two major difficulties: one is the lack of split places with small norm, the other is the test vector problem. The final bound is valid with arbitrary high probability and is better than the known bounds for a non-empty interval.
\end{abstract}

	\maketitle
	
	\tableofcontents

\section{Introduction}

	\subsection{Period Formulas for Tori and Subconvexity}
	
	Besides its support for the Lindel\"of hypothesis/Riemann hypothesis, the subconvexity of $L$-functions gives the most effective results for many interesting problems of equidistribution from related domains of the analytic number theory. Such a relation usually shows up when we consider the Weyl sums related to the relevant equidistribution problem. For example, in a series of papers \cite{Zh01,Zh01Bis,Zh04}, among other things, Zhang provided very general formulas relating central values of (a special type of) Rankin-Selberg $L$-functions to twisted Weyl sums corresponding to the equidistribution problem of Heegner points. Consequently, the relevant equidistribution result becomes a consequence of the subconvexity obtained in \cite{MV10}. Another example is provided by \cite{Sar01}, where the author relates the Quantum Unique Ergodicity conjecture to the subconvexity of (a special type of) triple product $L$-functions. In all these examples, one relates the equidistribution problem to the subconvexity problem by computing the relevant Weyl sums using appropriate \emph{period formulas}. Before \cite{Ve10, MV10}, the subconvexity results were mostly established based on approaches using classical relative trace formulas (Petersson-Kuznetsov formulas).
	
	In \cite{Ve10,MV10}, Michel \& Venkatesh initiated an approach to the subconvexity problem based on period formulas, which replaces the relative trace formulas by the Plancherel formulas. The equidistribution of certain subvarieties, such as slopes/cones approaching the low lying horocycles or Hecke points, is one of the main ingredients, which controls the part orthogonal to the one-dimensional contribution of the relevant period bounds. With some improvements on the regroupment of the Plancherel formulas, we made two special cases treated in \cite{MV10}, the cases of $\GL_2 \times \GL_1$ and $\GL_1$ $L$-functions, explicit in \cite{Wu14, Wu2}. The explicitly treated cases in \cite{Wu14,Wu2} use \emph{standard} period formulas, in the sense that they are (adelic generalization of) the standard integral representation of the relevant $L$-functions due to Hecke-Jacquet-Langlands theory, which
\begin{itemize}
	\item[(1)] (conjecturally) represent the most general families of automorphic $L$-functions;
	\item[(2)] yield all the standard analytic properties, such as the analytic continuation and functional equation, of the relevant $L$-functions.
\end{itemize}
Concretely, if $\varphi \in \pi \subset \intL^2(\GL_2,\omega)$ is a vector in a cuspidal representation $\pi$ of $\GL_2$ over a number field $\F$ with central character $\omega$, and if $\chi$ is a (unitary) Hecke character of $\F$. Then the Hecke-Jacquet-Langlands theory is roughly an equality for any $s \in \C$
\begin{equation}
	\int_{\F^{\times} \backslash \A^{\times}} \varphi(\begin{pmatrix} y & \\ & 1 \end{pmatrix}) \chi(y) \norm[y]_{\A}^{s-1/2} d^{\times}y = L(s,\pi \otimes \chi) \cdot \text{Local Factors},
\label{HJLRough}
\end{equation}
	where $\A$ is the ring of adeles of $\F$, $\A^{\times}$ is the group of ideles and $\norm_{\A}$ is the adelic norm. The ``local factors'' is a finite product of terms indexed by the places $v$ of $\F$ at which $\varphi$ or $\chi$ is ramified. In particular, if $s=1/2$, then (\ref{HJLRough}) can be viewed as an integral of $\varphi$ along the split diagonal torus against a character of that torus. If $\gp{T} < \GL_2$ is a torus determined by an $\F$-algebra embedding of a quadratic field extension $\E$ to the matrix algebra $\Mat_2(\F)$, and if $\Omega$ is a (unitary) Hecke character of $\E$ (hence $\gp{T}(\A)$) which coincides with $\omega^{-1}$ along the center $\gp{Z}$ of $\GL_2$, then one has a \emph{non-standard} period formula due to Waldspurger
\begin{equation}
	\extnorm{ \int_{\gp{Z}(\A) \backslash \gp{T}(\A)} \varphi(t) \Omega(t) d^{\times}t }^2 = L(1/2, \pi \times \pi(\Omega)) \cdot \text{Local Factors},
\label{WaFRough}
\end{equation}
where $\pi(\Omega)$ is the automorphic representation of $\GL_2$ over $\F$ related to $\Omega$ via theta correspondence and $L(s,\pi \times \pi(\Omega))$ is the Rankin-Selberg $L$-function. We used (\ref{HJLRough}) to obtain a subconvex bound of $L(1/2,\pi \otimes \chi)$ with respect to the conductor of $\chi$ in \cite{Wu14}.
	
	Since the Waldspurger's formula (\ref{WaFRough}) is formally very similar to the Hecke-Jacquet-Langlands formula (\ref{HJLRough}), many people \footnote{We thank Prof. Farrell Brumley and Prof. Tonghai Yang for generously mentioning such a possibility to us.} are curious about the possibility of generalizing our former result \cite{Wu14} with Waldspurger's formula. In fact, the use of the Waldspurger's formula was already discussed in \cite[\S 7]{Ve10}. But it was still in the direction of applying the subconvexity to the equidistribution results. The possibility of obtaining the subconvexity results directly from the Waldspurger's formula was not pursued in the subsequent \cite{MV10}. In fact, Venkatesh observed in \cite[\S 7]{Ve10} one of the two major difficulties for such a possibility: there is no unconditional results for the abundance of split places in a quadratic field extension with small norm with respect to the discriminant! The other major difficulty is the choice of a test vector in the Waldspurger's formula. In fact, if we take the new vector, we get a trivial equality most of the time. Instead, people look for a translate of the new vector by suitable (adelic) elements of the group of invertible elements in the relevant quaternion algebra which does make the formula vanishing. Much work has been done in this direction, since it (together with the analogous problem of test vector for the Gross-Zagier formula) has an intimate relation with the famous Birch-Swinnerton-Dyer conjecture. However, the best known results in this direction \cite{FMP, CST14} are still partial answer.

	\subsection{Statement of Main Result}
	
	In this paper, we pursue the possibility of obtaining an explicit subconvexity result directly from the Waldspurger's formula, which was left untreated in \cite{MV10} extended from \cite{Ve10}. In particular, we address the two major difficulties introduced in the previous subsection.
	
	Precisely, the family of $L$-functions we consider in this paper is as follows. Let $\pi$ be a fixed cuspidal representation of $\GL_2(\A)$ over a number field $\F$ with central character $\omega$. Let $(\E, \Omega)$ run over pairs consisting of
\begin{itemize}
	\item a quadratic field extension $\E/\F$,
	\item a Hecke character $\Omega$ of the idele class group $\E^{\times} \backslash \A_{\E}^{\times}$ satsifying $\Omega \mid_{\A^{\times}} \cdot \omega = 1$.
\end{itemize}
	We also demand that the following condition on the global epsilon factor is satisfied
	$$ (\varepsilon(1/2, \pi_{\E} \otimes \Omega) \cdot \Omega(-1) =) \varepsilon(1/2, \pi_{\E} \otimes \Omega) = 1, $$
where $\pi_{\E}$ is the base change of $\pi$ to $\GL_2(\A_{\E})$. We call such $(\E,\Omega)$ a $\pi$-\emph{admissible pair}. Write $\Dis(\E)$ for the discriminant of $\E$. We will introduce two conductors $\Cond^{\sharp}(\Omega,\pi)$ and $\Cond^{\flat}(\Omega,\pi)$ whose precise definitions will be given in Definition \ref{CondWaF}.
\begin{definition}
	We say that $(\E, \Omega)$ satisfies the \emph{$(\delta, \Delta)$-hypothesis} if 
	$$ \exists \quad E \asymp \Dis(\E)^{\delta} \Cond^{\flat}(\Omega,\pi)^{\Delta} \quad \text{such that} \quad \extnorm{ \left\{ \vp \text{ prime ideal of } \F \ \middle|\ \Nr(\vp) \leq E, \vp \text{ is split in } \E \right\} } \gg E/\log E. $$
\end{definition}
\begin{theorem}
	Let $(\E,\Omega)$ run over $\pi$-\emph{admissible pairs} in the above sense. Assume that $(\E,\Omega)$ satisfies the $(\delta/2(2+B), (1-2\theta)/4(2+B))$-hypothesis, where $\theta$ is any constant towards the Ramanujan-Petersson conjecture and the constants $\delta, B$ are given in Theorem \ref{AvgBound}. Then we have a subconvex bound
	$$ L(1/2, \pi_{\E} \otimes \Omega) \ll_{\F,\pi,\epsilon} (\Dis(\E)\Cond(\Omega))^{\epsilon} \cdot \Dis(\E)^{\frac{1}{2} - \frac{\delta}{2(2+B)}} \cdot \Cond^{\sharp}(\Omega,\pi)^{\frac{1}{2}} \Cond(\Omega)^{-\frac{1-2\theta}{4(2+B)}} $$
for any $\epsilon > 0$. The implied constants are polynomial in $\Dis(\F)$ and $\Cond(\pi)$.
\label{MainThm}
\end{theorem}
\begin{remark}
\noindent (1) The $(\delta,\Delta)$-hypothesis is a major difficulty we mentioned in the previous subsection. It seems to be out of reach to obtain it unconditionally using the current technology. However, it holds with arbitrary high probability. This will be given in Corollary \ref{SpPAbd}.

\noindent (2) The constants $\delta,B$ are \emph{not} hypothetic. Following our refinement \cite{Wu3} of \cite{Wu14}, we can take $\delta = (1-2\theta)/8$, while $B$ still need to be optimized when the current paper is being written. We feel that better constants can be obtained using the approach of relative trace formulae in the direction of \cite{FW09}.

\noindent (3) If $\pi(\Omega)$ denotes the automorphic representation of $\GL_2(\A)$ associated with $\Omega$ via the theta correspondence, it is easy to verify, using \cite[Theorem 4.7 (iii)]{JL70} for example, that
	$$ \Cond(\pi(\Omega)) \asymp_{\F} \Dis(\E) \Cond(\Omega). $$
	Hence Theorem \ref{MainThm} is a special case of the subconvexity for $\GL_2 \times \GL_2$ treated in \cite{MV10}, which is ``unbalanced'' for the two parts of the conductor. In fact, the dependence on $\Dis(\E)$ is worse than the one provided by \cite{MV10} (we are working on an explicit version of \cite{MV10} which will soon be available), while the dependence on $\Cond(\Omega)$ is far better, which is the main interest of this result.
	
\noindent (4) With the technical partition of places (\ref{PlacesPartition}), we recall
	$$ \Cond^{\sharp}(\Omega,\pi) \asymp_{\pi} \Cond(\Omega) \cdot \sideset{}{_{v \in S_{\infty,i,c}}} \prod \Cond(\Omega_v)^{2\theta}. $$
	The appearance of $\Cond(\Omega_v)^{2\theta}$ for $v \in S_{\infty,i,c}$ is due to the lack of estimation of the maximal absolute value of certain Legendre type functions (see Remark \ref{LackLegEst}), which should be removable. 
\end{remark}

	We explain our innovation to deal with the other major difficulty of test vector problem. Locally, instead of looking among translates of the new vector, we look for translates of some fixed subspace stable by the action of the maximal compact subgroup. This idea came up when we were trying to understand one of the approaches given in \cite{FMP}. It can be viewed as an intermediate step towards a solution of the original test vector problem, which already suffices for our purpose. In fact, we are more concerned with the size of the local factors in the relevant period formula and not interested in the precise location of the test vector. On the one hand, fixed subspaces are more convenient because in dealing with translates of the new vector one always end up with proving the non-vanishing of some Kloosterman-like exponential sums. While such non-vanishing is usually not obvious, the non-vanishing of the sum of the square in modulus of a complete family of such exponential sums is. Geometrically, this means that the projection of the Waldspurger test vector onto some translate of a fixed subspace has size more stable than its projection onto the line of one single vector in that translate of the subspace. On the other hand, finding the precise location in the translate of our fixed subspace might only reduce the polynomial dependence on $\Cond(\pi)$ in the final subconvex bound. However, even this intermediate step manifest a strong arithmetic nature of the problem (see (\ref{ArithNaturProj})). We hope that a more conceptual understanding of our approach in the style of \cite{GP91} can bring in new understanding on the explicit Gross-Zagier formula.
	
	Finally, we compare our result with known results. In view of the identity
	$$ L(s, \pi \times \pi(\Omega)) = L(s, \pi_{\E} \otimes \Omega), $$
where $\pi_{\E}$ is the base change of $\pi$ to $\GL_2(\A_{\E})$, two results in the literature give estimations of the same quantity. One is an explicit version \cite{Wu3} of \cite{Wu14}, which gives
\begin{equation}
	L(1/2, \pi \otimes \chi) \ll_{\epsilon,\pi_{\infty}} (\Cond(\pi)\Cond(\chi))^{\epsilon} \Dis(\F)^{B'}\Cond(\pi_{\fin})^A \Cond(\chi)^{\frac{1}{2}-\frac{1-2\theta}{8}},
\label{ExpGL2GL1}
\end{equation}
	where $\pi$ is an automorphic representation of $\GL_2$ over $\F$ and $\chi$ is a Hecke character. The expected value of $A$ satisfies
	$$ 1/4 \leq A \leq 1/2. $$
	In fact, $A = 1/4$ is the convex bound which seems to be impossible to break using the method of \cite{Wu14} while $A=1/2$ is established for the case $\F=\Q$ in \cite{BH08}. Assuming the non-realistic $B'=0$, it yields
	$$ L(1/2, \pi_{\E} \otimes \Omega) \ll_{\F,\epsilon, \pi} \Dis(\E)^{2A} \Cond(\Omega)^{\frac{1}{2}-\frac{1-2\theta}{8}}. $$
	The other is an explicit version of \cite{MV10} for the Rankin-Selberg $\GL_2 \times \GL_2$. In the original work, no explicit bound is given. But our ongoing explicit bound for $\GL_2$, which follows the method of \cite{MV10}, shows that the bound can not be better than
	$$ L(1/2, \pi_1 \times \pi_2) \ll_{\F,\epsilon, \pi_1} \Cond(\pi_2)^{\frac{1}{2} - \frac{1-2\theta}{24+32A}}, $$
	where $\pi_1,\pi_2$ are cuspidal representations of $\GL_2$ with fixed central characters, $A$ is the same constant in (\ref{ExpGL2GL1}). It yields
	$$ L(1/2, \pi \times \pi(\Omega)) \ll_{\F,\epsilon,\pi} (\Dis(\E) \Cond(\Omega))^{\frac{1}{2}-\frac{1-2\theta}{24+32A}}. $$
	Our result Theorem \ref{MainThm} depends on a pair of constants $(\delta,B)$ given in Theorem \ref{AvgBound}. Inserting (\ref{ExpGL2GL1}) for \emph{quadratic characters} $\chi$ into Theorem \ref{AvgBound}, we deduce that at worst we can take
	$$ (\delta,B) = ((1-2\theta)/8,A). $$
	Ignoring $\theta$, i.e., taking $\theta=0$, we see that our result is better than both \cite{Wu3,Wu14} and \cite{MV10} if
	$$ \Dis(\E)^{\frac{1-2A}{2(4+7A)}} < \Cond(\Omega) < \Dis(\E)^{\frac{1+8(A+2)(4A-1)}{2A}}, $$
which is a non-empty interval. In fact, this is predictable from a coarse examination of the methods. The amplifier in this paper has an efficiency between the ones for \cite{Wu14} and \cite{MV10} with respect to both $\Dis(\E)$ and $\Cond(\Omega)$. It would be interesting to ask if a method combining the advantages of the three methods exists.

	\subsection{Notations and Conventions}
	
	$\N$ is the set of natural numbers including $0$.
	
	By default, $\F$ resp. $\E$ resp. $\B$ stands for a number field resp. a quadratic field extension of $\F$ resp. a quaternion algebra defined over $\F$, with the ring of integers $\vo$ resp. $\vO$ resp. a maximal $\vo$-order $\VO$. The ring of adeles of $\F$ resp. $\E$ is denoted by $\A=\A_{\F}$ resp. $\A_{\E}$, the group of ideles $\A^{\times}$ resp. $\A_{\E}^{\times}$. $v$ resp. $w$ denotes a place of $\F$ resp. $\E$ and $\norm_v$ resp. $\norm_w$ denotes the absolute valuation on $\F_v$ resp. $\E_w$. At a complex place, $\norm$ is the usual absolute value on $\C$. $\vp$ resp. $\vP$ denotes a finite place/prime ideal of $\F$/$\vo$ resp. $\E$/$\vO$. We will sometimes write $q=q_{\vp}$ for $\Nr(\vp)$, and $\varpi_{\vp}$ a uniformizer of $\F_{\vp}$. If $\vp$ is not split in $\E$, we write $\varpi_{\E,\vp}$ for a uniformizer of $\E_{\vp}$. $\eta=\eta_{\E/\F}$ denotes the quadratic Hecke character associated with $\E/\F$ by the class field theory. In several sections/subsections the subscript for localization will be omitted, in which case a sentence of clarification is given in the beginning of these sections/subsections.
	
	$A^{\times}$ denotes the group of invertible elements of the algebra $A$ where $A$ can be $\vo,\vo_{\vp}, \vO, \vO_{\vp}, \vO_{\vP}$.
	
	For $\chi_v$ a character of $\F_v^{\times}$, its (analytic) conductor $\Cond(\chi_v)$ is defined as follows:
	
\noindent (1) If $\F_v = \R$, then there is $\sigma \in \R, m \in \{ 0,1 \}$ such that $\chi_v(t)=|t|^{i\sigma} {\rm sgn} (t)^m, t \in \R^{\times}$. Define 
	$$ \Cond(\chi_v) = 2+ |i\sigma + m|/2. $$
	
\noindent (2) If $\F_v = \C$, then there is $\sigma \in \R, m \in \Z$ such that $\chi_v(\rho e^{i \theta})=\rho^{i\sigma} e^{im\theta}, \rho > 0, \theta \in [0,2\pi)$. Define
	$$ \Cond(\chi_v) = (2+ |i\sigma + |m||/2)^2. $$
	
\noindent (3) If $v=\vp<\infty$, we define the \emph{logarithmic conductor} resp. conductor by
	$$ \cond(\chi_{\vp}) = \min \{ n \in \N : \chi_{\vp} \mid_{(1+\vp^n) \cap \vo_{\vp}^{\times}} = 1 \}, \quad \text{resp.} \quad \Cond(\chi_{\vp})=q_{\vp}^{\cond(\chi_{\vp})}. $$
	
\noindent These definitions also make sense for $\Omega_w$ a character of $\E_w^{\times}$. Define
	$$ \Cond(\Omega_v) = \sideset{}{_{w \mid v}} \prod \Cond(\Omega_v). $$
	
\begin{lemma}
	At $v=\vp < \infty$, if $\Omega_{\vp}$ coincides with $\omega_{\vp}$ on the diagonal embedding $\F_{\vp}^{\times} \hookrightarrow \E_{\vp}^{\times}$, then
	$$ \Cond(\Omega_{\vp}) \asymp_{\omega_{\vp}} \left\{ \begin{matrix} \min_{\vP \mid \vp}(\Cond(\Omega_{\vP}))^2 & \text{if } \vp \text{ is split in } \E \\ \Nr(\vp)^{2 \IntP{\cond(\Omega_{\vP}) / e_{\vp}}} & \text{if } \vP \text{ is the unique prime lying above } \vp \end{matrix} \right. , $$
	where $e_{\vp} = e(\E_{\vp} / \F_{\vp})$ is the ramification index.
\label{CondComp}
\end{lemma}
\begin{proof}
	Only the case when $\vp$ is totally ramified needs to be explained, other cases being easy. One way is to use the classification of orders that we will recall in \S \ref{ClEmb}. In particular, there exists a uniformizer $\varpi_{\E,\vp}$ such that
	$$ \vO_{\vP} = \vo_{\vp} + \varpi_{\E,\vp} \vo_{\vp}. $$
	If the conductor $\cond(\omega_{\vp})=c$ and $\Omega_{\vP}$ is trivial on $1 + \varpi_{\E,\vp}^{2r+1} \vO_{\vP}$ for some $r \geq c$, then from
	$$ 1+\varpi_{\vp}^r \vO_{\vP} = 1+ \varpi_{\vp}^r \vo_{\vp} + \varpi_{\vp}^r \varpi_{\E,\vp} \vo_{\vp} = (1+ \varpi_{\vp}^r \vo_{\vp})(1+\varpi_{\vp}^r \varpi_{\E,\vp} \vo_{\vp}) \subset (1+ \varpi_{\vp}^r \vo_{\vp})(1+\varpi_{\E,\vp}^{2r+1} \vO_{\vP}) $$
	we deduce that $\Omega_{\vP}$ is trivial on $1+\varpi_{\E,\vp}^{2r} \vO_{\vP}$. Thus either $\cond(\Omega_{\vP}) < 2\cond(\omega_{\vp})$ or $2 \mid \cond(\Omega_{\vP})$.
\end{proof}
	
	If $\gp{H}$ is an algebraic group defined over $\F$, then $\gp{H}(\F_v)$ is the group of $\F_v$-points, sometimes written as $\gp{H}_v$ if no confusion occurs. $\gp{G}_{\B}$ denotes the $\F$-group of invertible elements in $\B$ and we usually omit the subscript $\B$. We write $[\gp{G}] := \gp{G}(\F) \gp{Z}(\A) \backslash \gp{G}(\A)$, where $\gp{Z}$ is the center of $\gp{G}$.
	
	$\A, \A_{\E}$ are given the standard measures \`a la Tate with respect to the standard additive character $\psi_0$ of the ring of adeles of the field $\Q$ of rationals and the trace map. $\A^{\times}$ resp. $\A_{\E}^{\times}$ are given the Tamagawa measure with convergence factors $\zeta_v(1)$ resp. $\zeta_w(1)$, where $\zeta_v(s)$ resp. $\zeta_w(s)$ is the local zeta function of $\F$ resp. $\E$. $\gp{G}(\A)$ is given the Tamagawa measure with convergence factors $\zeta_v(1)$. In particular, the total mass of $[\gp{G}]$ is $2$.
	
	In $\GL_2$, for local or global variables $x \in \F_v$ or $\A$, $y \in \F_v^{\times}$ or $\A^{\times}$, we write
	$$ n(x) = \begin{pmatrix} 1 & x \\ & 1 \end{pmatrix}, \quad n_-(x) = \begin{pmatrix} 1 & \\ x & 1 \end{pmatrix}, \quad a(y) = \begin{pmatrix} y & \\ & 1 \end{pmatrix}, \quad a_-(y) = \begin{pmatrix} 1 & \\ & y \end{pmatrix}. $$
	$\gp{B} < \GL_2$ denotes the subgroup of upper triangular matrices. $\gp{K} = \sideset{}{_v} \prod \gp{K}_v$ is the standard maximal compact subgroup of $\GL_2(\A)$, i.e.
	$$ \gp{K}_v = \left\{ \begin{matrix} \SO_2(\R) & \text{if } \F_v = \R \\ \SU_2(\C) & \text{if } \F_v = \C \\ \GL_2(\vo_{\vp}) & \text{if } v=\vp < \infty \end{matrix} \right. . $$
	At $v=\vp < \infty$, we define some subgroups of $\gp{K}_{\vp}$ for $n \in \N$
	$$ \gp{K}_0(\vp^n) := \left\{ \begin{pmatrix} a & b \\ c & d \end{pmatrix} \in \gp{K}_{\vp} \middle| c \in \vp^n \right\}, \quad \gp{K}(\vp^n) := \left\{ \kappa \in \gp{K} \middle| \kappa \equiv \begin{pmatrix} 1 & \\ & 1 \end{pmatrix} \pmod{\vp^n} \right\} $$
	with the convention $\gp{K}_0(\vp^0)= \gp{K}(\vp^0) = \gp{K}_{\vp}$.
	
	$L(\cdot)$ denotes $L$-functions without factors at infinity. $\Lambda(\cdot)$ denotes the complete $L$-functions. In particular, $\zeta_{\F}(s)$ is the Dedekind zeta-function of $\F$ and $\Lambda_{\F}(s)$ is the complete Dedekind zeta-function.
	
	Other notations will be introduced later in the text.

	\subsection{Acknowledgement}
	
	The preparation of this paper scatters during the stay of the author as postdoctoral researcher in FIM at ETHZ, at MPIM in Bonn and in TAN at EPFL. The author would like to thank all three institutes for the hospitality. The author is greatly in debt of his tutor Professor Emmanuel Kowalski and his colleague Professor Paul Nelson at ETHZ for their encouragements, generously sharing ideas and references during the whole preparation of this paper.

\section{Miscellaneous Preliminaries}

	\subsection{Waldspurger Formula: Subspace Version}
	
	Let $\pi = \otimes_v' \pi_v$ be a cuspidal representation of $\GL_2(\ag{A})$ with central character $\omega$. Let $\Omega$ be a Hecke character of $\ag{A}_{\E}^{\times}$, whose restriction to $\ag{A}^{\times}$ coincides with $\omega^{-1}$. Let $\B$ be a quaternion algebra over $\F$ containing $\E$. We write for $\gp{G}=\gp{G}_{\B}$ the $\F$-group of the invertible elements in $\B$.
\begin{definition}
	Let $v$ be a place of $\F$. We say that $\B_v$ belongs to $(\pi_v,\E_v,\Omega_v)$, where $\E_v = \F_v \otimes_{\F} \E$ and $\Omega_v = \sideset{}{_{w \mid v}} \prod \Omega_w$ is the local component of $\Omega$ at $\E_v^{\times}$, if
\begin{itemize}
	\item[(1)] $\E_v$ is $\F_v$-embeddable to $\B_v$, in which case we denote by $\gp{T}_v$ the image of $\E_v^{\times}$ in $\gp{G}_v$ for a(any) such $\F_v$-embedding and regard $\Omega_v$ as a character of $\gp{T}_v$;
	\item[(2)] The Jacquet-Langlands lifting $\pi_v' = \JL(\pi_v,\B_v)$ of $\pi_v$ to $\gp{G}_v$ exists and $\Hom_{\gp{T}_v}(\pi_v', \Omega_v^{-1}) \neq \{ 0 \}$.
\end{itemize}
	We say that $\B$ belongs to $(\pi,\E,\Omega)$ if locally at every place $v$ of $\F$, $\B_v$ does.
\label{QABelong}
\end{definition}

\noindent Let $\pi_{\E} = \otimes_v' \pi_{\E,v}$ be the base change of $\pi$, automorphic representation of $\GL_2(\ag{A}_{\E})$. We know from a theorem of Tunnell \cite{Tu83} and Saito \cite{Sai93} that the local epsilon-factors satisfy:
\begin{itemize}
	\item $ \varepsilon_v(1/2, \pi_{\E,v} \otimes \Omega_v) \cdot \Omega_v(-1) \in \{ \pm 1 \} $.
	\item $ \varepsilon_v(1/2, \pi_{\E,v} \otimes \Omega_v) \cdot \Omega_v(-1) = 1 $ if and only if $\Hom_{\gp{T}_v}(\pi_v, \Omega_v^{-1}) \neq \{ 0 \}$, where $\gp{T}_v$ is the image of $\E_v^{\times}$ for any $\F_v$-embedding of $\E_v$ into $\Mat_2(\F_v)$;  $ \varepsilon_v(1/2, \pi_{\E,v} \otimes \Omega_v) \cdot \Omega_v(-1) = -1 $ if and only if $\Hom_{\gp{T}_v}(\pi_v', \Omega_v^{-1}) \neq \{ 0 \}$, where $\B_v$ the unique division quaternion $\F_v$-algebra and where $\gp{T}_v$ is the image of $\E_v^{\times}$ for any $\F_v$-embedding of $\E_v$ into $\B_v$.
\end{itemize}
Consequently, a $\F$-quaternion algebra $\B$ belonging to $(\pi,\E,\Omega)$ exists if and only if the global epsilon factor satisfies
\begin{equation}
	\varepsilon(1/2, \pi_{\E} \otimes \Omega) \cdot \Omega(-1) = 1,
\label{GlobalAdmCond}
\end{equation}
under which condition such $\B$ is unique and the global Jacquet-Langlands lifting $\JL(\pi)=\JL(\pi; \B)$ of $\pi$ to $\gp{G}(\A)$ exists. Moreover, if we choose an $\F$-embedding $\iota: \E \to \B$, regard the image of $\E^{\times}$ as an $\F$-subgroup $\gp{T}$ of $\gp{G}$ containing the center $\gp{Z}$ and normalizes the Haar measure $dt = \sideset{}{_v} \prod dt_v$ on $\gp{Z} \backslash \gp{T}$ as the quotient of the measures \`a la Tate on $\A_{\E}^{\times}$ and $\A_{\F}^{\times}$, so that for $\eta=\eta_{\E/\F}$ the quadratic Hecke character associated with the quadratic extension $\E/\F$ we have
\begin{equation}
	{\rm Vol}([\gp{T}], dt) = 2\Lambda(1,\eta), \quad [\gp{T}]:=\gp{T}(\F)\gp{Z}(\A)\backslash \gp{T}(\A),
\label{TamNumT}
\end{equation}
Waldspurger \cite[Proposition 7]{Wa85} (extended in \cite[\S 1.4 \& \S 2]{YZZ13} \footnote{The normalization of measures in \cite[\S 1.4.2]{YZZ13} is not convenient for our purpose. We follow the one given in \cite{Wa85}.}) proved the following formula:
\begin{theorem}
	For any smooth function $\varphi = \otimes_v' \varphi_v \in \pi'=\JL(\pi)=\JL(\pi;\B)$ on the automorphic quotient $[\gp{G}] := \gp{G}(\F) \gp{Z}(\ag{A}) \backslash \gp{G}(\ag{A})$, which is (abstractly) decomposable, we have
$$
	\frac{\left| \ell(\varphi; \Omega, \iota)  \right|^2}{\langle \varphi, \varphi \rangle_{[\gp{G}]}} =  \frac{\Lambda(1/2, \pi_{\E} \otimes \Omega)}{2 \Lambda(1, \pi, {\rm Ad})} \cdot \prod_{v \in V_{\F}} \frac{L(1,\eta_v)L(1,\pi_v, {\rm Ad})}{\zeta_v(2)L(1/2, \pi_{\E,v} \otimes \Omega_v)} \alpha_v(\varphi_v; \Omega_v, \iota_v),
$$
	where the notations and conventions are
\begin{itemize}
	\item $ \ell(\varphi; \Omega, \iota) = \int_{[\gp{T}]} \varphi(t) \Omega(t) dt $;
	\item $ \alpha_v(\varphi_v; \Omega_v, \iota_v) = \int_{\gp{Z}(\F_v) \backslash \gp{T}(\F_v)} \frac{\langle \pi_v'(t).\varphi_v, \varphi_v \rangle_v}{\langle \varphi_v, \varphi_v \rangle_v} \Omega_v(t) dt $ is independent of the choice of a local pairing;
	\item the measure on $[\gp{G}]$ defining the norm is the Tamagawa measure which gives the whole space the total mass $2$.
\end{itemize}
\label{WaF}
\end{theorem}
	
	We will use Theorem \ref{WaF} with some subspace of $\JL(\pi)$ instead of individual vectors. If $\sigma_v$ is a finite dimensional subspace of $\pi_v'$ and if $\Bas_v$ is an orthogonal basis of $\sigma_v$, we define
	$$ \alpha_v(\sigma_v; \Omega_v, \iota_v) = \sideset{}{_{e \in \Bas_v}} \sum \alpha_v(e; \Omega_v, \iota_v), $$
	$$ \tilde{\alpha}_{\vp}(\sigma_{\vp}; \Omega_{\vp}, \iota_{\vp}) := \frac{L(1,\eta_{\vp})L(1,\pi_{\vp}, {\rm Ad})}{\zeta_{\vp}(2)L(1/2, \pi_{\E,\vp} \otimes \Omega_{\vp})} \alpha_{\vp}(\sigma_{\vp}; \Omega_{\vp}, \iota_{\vp}), \text{ for } \vp < \infty, $$
where we have of course used the abus of notations
	$$ L(1/2, \pi_{\E,\vp} \otimes \Omega_{\vp}) := \sideset{}{_{\vP \mid \vp}}\prod L(1/2, \pi_{\E,\vP} \otimes \Omega_{\vP}). $$
If $\sigma = \otimes_v' \sigma_v$ is a finite dimensional subspace of $\JL(\pi)$ with an(any) orthogonal basis $\Bas$, we define
	$$ \alpha(\sigma; \Omega, \iota) = \sideset{}{_{e \in \Bas}} \sum \frac{\left| \ell(e; \Omega, \iota)  \right|^2}{\langle e, e\rangle_{[\gp{G}]}}. $$
	They are independent of the choice of the basis. Then we have
\begin{align}
	\alpha(\sigma; \Omega, \iota) &= \frac{L(1/2, \pi_{\E} \otimes \Omega)}{2 L(1, \pi, {\rm Ad})} \cdot \sideset{}{_{v \mid \infty}} \prod \frac{L(1,\eta_v)}{\zeta_v(2)} \label{WaFSV} \\
	&\quad \cdot \sideset{}{_{v \mid \infty}} \prod \alpha_v(\sigma_v; \Omega_v, \iota_v) \cdot \sideset{}{_{\vp < \infty}} \prod \tilde{\alpha}_{\vp}(\sigma_{\vp}; \Omega_{\vp}, \iota_{\vp}). \nonumber
\end{align}
	Note that the component
	$$ \sideset{}{_{v \mid \infty}} \prod \frac{L(1,\eta_v)}{\zeta_v(2)} $$
can be bounded from above and below by constants depending only on $\F$.
\begin{proposition}
	If $\sigma_v$ resp. $\sigma_{\vp}$ is stable by the action of a compact subgroup $\gp{K}_v'$ resp. $\gp{K}_{\vp}'$, then $\alpha_v(\cdot)$ resp. $\tilde{\alpha}_{\vp}(\cdot)$ depends only on the conjugacy class of $\iota_v$ resp. $\iota_{\vp}$ by $\gp{K}_v'$ resp. $\gp{K}_{\vp}'$.
\end{proposition}
\begin{proof}
	If we conjugate $\iota_v$ by $\kappa^{-1} \iota_v \kappa$ for some $\kappa \in \gp{K}_v'$, then we simply replace the basis $\Bas_v$ by $\kappa \Bas_v$, which is another orthogonal basis. The result follows from the independence of choice of basis.
\end{proof}

	As subspaces, we are particularly interested in two types of sub $\gp{K}_v$-representations of $\pi_v$:
\begin{definition}
\noindent (1) $\sigma_v=\sigma_0=\sigma_0(\pi_v)$ is the subspace generated by the new vector $v_0$ and the action of $\gp{K}_v$. We denote the orthogonal projector from $\pi_v$ to $\sigma_0$ by $\Proj_{*,v}$.

\noindent (2) $v = \vp < \infty$, $\sigma_{\vp}=\pi_{\vp}^{\gp{K}(\vp^c)}=[\pi_{\vp};c]$ is the subspace of vectors invariant by $\gp{K}(\vp^c)$. We denote the orthogonal projector from $\pi_{\vp}$ to $[\pi_{\vp};c]$ by $\Proj_{c,\vp}$. If $c=\cond(\pi_{\vp})$, we write $[\pi_{\vp}]=[\pi_{\vp};c]$.
\label{InvSpDef}
\end{definition}

	There is a simple fact which is worth recording.
\begin{lemma}
	For any vector $\varphi_v \in \pi_v'$, $g_v \in \gp{G}_v'$ and any embedding $\iota_v: \E_v \to \B_v$, we have
	$$ \alpha(\pi_v'(g_v).\varphi_v; \Omega_v, g_v\iota_v g_v^{-1}) = \alpha(\varphi_v; \Omega_v, \iota_v). $$
\label{FixEmb}
\end{lemma}

	\subsection{Local Waldspurger Functional}
	
	We omit the subscript $v$ since we work locally in this subsection and assume $\Hom_{\gp{T}}(\pi', \Omega^{-1}) \neq 0$, hence it is of dimension $1$. Recall that $\gp{G}$ is $\B^{\times}$ and $\gp{T}$ is the image of $\E^{\times}$ for an embedding $\iota$.
	
	First consider the case $\E$ is non-split, hence a field. In this case, $\gp{Z} \backslash \gp{T} \simeq \F^{\times} \backslash \E^{\times}$ is compact. Hence $\Res_{\gp{T}}^{\gp{G}} \pi'$ is a direct sum of characters of $\gp{T}$. Any $\ell \in \Hom_{\gp{T}}({\rm Res}_{\gp{T}}^{\gp{G}} \pi', \Omega^{-1})$ can be written as
	$$ \ell(v) = \frac{\langle v, \hat{v} \rangle}{\langle \hat{v}, \hat{v} \rangle} \ell(\hat{v}), \quad \forall v \in \pi'^{\infty} $$	
where $0 \neq \hat{v}=\hat{v}_{\iota,\Omega} \in \pi'$ is the unique up to scalar vector satisfying
\begin{equation}
	\pi'(t).\hat{v} = \Omega^{-1}(t)\hat{v}, \quad \forall t \in \gp{T}.
\label{Omega-1Vec}
\end{equation}
We fix an $\ell \neq 0$, i.e., $\ell(\hat{v}) \neq 0$. The operator
	$$ \Proj: \pi' \to \pi', \Proj(v) = \frac{1}{\Vol(\gp{Z} \backslash \gp{T})} \int_{\gp{Z} \backslash \gp{T}} \Omega(t) \pi'(t).v dt $$
satisfies $\Proj^2 = \Proj, \Proj^*=\Proj$ with image equal to the $\Omega^{-1}$-isotypic subspace of $\Res_{\gp{T}}^{\gp{G}} \pi'$. Hence it is the orthogonal projector onto the $\Omega^{-1}$-isotypical subspace $\C \hat{v}$. We deduce
	$$ \Proj(v) = \frac{\langle v, \hat{v} \rangle}{\langle \hat{v}, \hat{v} \rangle}  \hat{v}, \quad \forall v \in \pi'. $$
Hence we have
\begin{equation}
	\alpha(v; \Omega, \iota) = \int_{\gp{Z} \backslash \gp{T}} \frac{\langle \pi'(t).v, v \rangle}{\langle v, v \rangle} \Omega(t) dt = \Vol(\gp{Z} \backslash \gp{T}) \frac{\langle \Proj(v), v \rangle}{\langle v, v \rangle} = \Vol(\gp{Z} \backslash \gp{T}) \frac{\lVert \hat{v} \rVert^2}{\lVert v \rVert^2} \cdot \frac{|\ell(v)|^2}{|\ell(\hat{v})|^2}.
\label{NSPFRIndividual}
\end{equation}
If $\sigma$ is any finite dimensional subspace of $\pi$ with orthogonal projector $\Proj_{\sigma}$, we deduce that
\begin{equation}
	\alpha(\sigma; \Omega, \iota) = \Vol(\gp{Z} \backslash \gp{T}) \frac{\lVert \Proj_{\sigma}(\hat{v}) \rVert^2}{\lVert \hat{v} \rVert^2}.
\label{NSPFRAverage}
\end{equation}
\begin{definition}
	If $\iota=\iota_r$ resp. $\iota_r'$ (which will be classified in \S \ref{ClassEmb}), we write the corresponding $T=T_r, \hat{v}_r=\hat{v}_{r,\Omega}=\hat{v}_{\iota_r,\Omega}$ resp. $T=T_r', \hat{v}_r'=\hat{v}_{r,\Omega}'=\hat{v}_{\iota_r,\Omega}'$. If we are given a model of $\pi$ as a space of functions, the function corresponding to $\hat{v}_r$ resp. $\hat{v}_r'$ is denoted by $\hat{f}_r$ resp. $\hat{f}_r'$.
\label{conventionHat_r}
\end{definition}

	Next, turn to the case where $\E$ splits. In this case $\E \simeq \F \times \F$, $\B$ must split. There exist $g_0 \in \GL_2(\F)$ and characters $\chi_1,\chi_2$ of $\F^{\times}$ with $\Cond(\chi_1) \leq \Cond(\chi_2)$ such that
\begin{equation}
	\iota(t_1,t_2) = g_0^{-1} \begin{pmatrix} t_1 & \\ & t_2 \end{pmatrix} g_0, \quad \forall t_1,t_2\in F,
\label{SplitEmb}
\end{equation}
and
$$ \Omega(t_1,t_2)=\chi_1(t_1)\chi_2(t_2), \quad \forall t_1,t_2\in \F^{\times}. $$	
A Waldspurger functional $\ell \in \Hom_{\gp{T}}(\pi', \Omega^{-1})$ can be defined via a(any) Whittaker model $\mathcal{W}(\pi', \bar{\psi})$ of $\pi'$ as
\begin{equation}
	\ell(W) = \int_{\F^{\times}} W(a(y)g_0) \chi_1(y) d^{\times}y.
\label{SplitWaldFunct}
\end{equation}
For any $v \in \pi'$, if $W_v$ is the corresponding function in the Whittaker model, then it is easy to show
\begin{equation}
	\alpha(v; \Omega, \iota) = \int_{\gp{Z} \backslash \gp{T}} \frac{\langle \pi'(t).v, v \rangle}{\langle v, v \rangle} \Omega(t) dt = \frac{|\ell(W_v)|^2}{\lVert W_v \rVert^2}
\label{SPFRIndividual}
\end{equation}
where $\lVert W_v \rVert^2 = \int_{\F^{\times}} |W_v(a(y))|^2 d^{\times}y$.

	\subsection{Classes of Local Embeddings $\E_v \to \Mat_2(\F_v)$}
	\label{ClassEmb}
	
	We classify the embeddings of $\E_v \to \Mat_2(\F_v)$ up to conjugation by $\gp{K}_v$, the standard maximal compact subgroup of $\GL_2(\F_v)$. We omit the subscript $v$ since we work locally in this subsection.
	
		\subsubsection{$v \mid \infty$, $\E_v$ split.}
		
	Fix a splitting $s: \E \simeq \F \times \F$, and define
	$$ \iota_0: \E \to \Mat_2(\F), \iota_0(s^{-1}(t_1,t_2)) = \begin{pmatrix} t_1 & 0 \\ 0 & t_2 \end{pmatrix}, $$
	Then for $r \in \F$, we define $\iota_r: \E \to \Mat_2(\F)$ such that
\begin{equation}
	\iota_r(x) = n(-r)\iota_0(x)n(r), \quad \forall x \in \E.
\label{ArchCarEmbS}
\end{equation}
\begin{proposition}
	Every $\F$-embedding $\iota: \E \to \Mat_2(\F)$ is conjugate by $\gp{K}$ to a unique $\iota_r$ for
\begin{itemize}
	\item[(1)] $r \in \R$ if $\F = \R$;
	\item[(2)] $r \geq 0$ if $\F = \C$.
\end{itemize}
\end{proposition}
\begin{proof}
	Every $\iota$ is conjugate to $\iota_0$ by some element in $\GL_2(\F)$. The result follows from the decompositions
	$$ \GL_2(\R) = \bigsqcup_{r \in \R} \iota_0(\E^{\times}) \begin{pmatrix} 1 & r \\ 0 & 1 \end{pmatrix} \SO_2(\R), \quad \GL_2(\C) = \bigsqcup_{r \geq 0} \iota_0(\E^{\times}) \begin{pmatrix} 1 & r \\ 0 & 1 \end{pmatrix} \SU_2(\C). $$
\end{proof}

		\subsubsection{$v \mid \infty$, $\E_v$ non-split.}
		
	Necessarily $\F = \R, \E \simeq \C$. Fix an isomorphism $s: \E \simeq \C$ and define
	$$ \iota_0(s^{-1}(i)) = \begin{pmatrix} 0 & 1 \\ -1 & 0 \end{pmatrix}. $$
	For any $r \neq 0$, we define
\begin{equation}
	\iota_r = \begin{pmatrix} 1 & 0 \\ 0 & r^{-1} \end{pmatrix} \iota_0 \begin{pmatrix} 1 & 0 \\ 0 & r \end{pmatrix} = a(r) \iota_0 a(r^{-1}).
\label{ArchCarEmbNS}
\end{equation}
\begin{proposition}
	Every $\F$-embedding $\iota: \E \to \Mat_2(\F)$ is conjugate by $\gp{K}$ to a unique $\iota_r$ for $r \neq 0$.
\end{proposition}

		\subsubsection{$v=\vp < \infty$, $\E_{\vp}$ split.}
		
	Fix an isomorphism $s=s_{\vp}: \E \simeq \F \times \F$. Any $\vo$-order of $\E$ can be written as $\vO = s^{-1}(\vo(1,1)+\vo \tau)$ for some $\tau$ satisfying a split separable monic polynomial in $\vo[X]$. After replacing $\tau$ by $u+v\tau$ for suitable $u \in \vo, v \in \vo^{\times}$, we can assume $\tau$ satisfies $\tau(\tau-\varpi^r)=0$ for some $r \in \N$, thus $\tau=(\varpi^r,0)=\varpi^r \beta$ with
\begin{equation}
	\beta = (1,0).
\label{BetaSplit}
\end{equation}	
Hence for any $\vo$-order of $\E$ there is a unique $r \in \N$, called the \textbf{(logarithmic) conductor}, such that it can be written as
	$$ \vO_r = s^{-1}(\vo(1,1)+\varpi^r \vo(1,0)). $$
Define $\iota_0$ to be the embedding
	$$ \iota_0(s^{-1}(t_1,t_2)) = \begin{pmatrix} t_1 & \\ & t_2 \end{pmatrix}, \quad \forall (t_1,t_2) \in \F \times \F = \E. $$
The usual Iwasawa decomposition gives
\begin{equation}
	\GL_2(\F) = \bigsqcup_{r=0}^{\infty} \iota_0(\E^{\times}) \begin{pmatrix} 1 & \varpi^{-r} \\ 0 & 1 \end{pmatrix} \GL_2(\vo).
\label{Iwa}
\end{equation}
For any integer $r \geq 1$, we define an embedding $\iota_r$ by
\begin{equation}
	\iota_r(t) = n(\varpi^{-r}) \iota_0(t) n(-\varpi^{-r}), \forall t \in \F \times \F = \E.
\label{CarEmbS}
\end{equation}
Then we have
	$$ \iota_r(\E) \cap \Mat_2(\vo) = \iota_r(\vO_r). $$
\begin{proposition}
	For any embedding $\iota: \E \to \B =\Mat_2(\F)$, there is a unique integer $r=:\cond(\iota) \in \N$, called the \textbf{(logarithmic) conductor} of $\iota$ determined by
	$$ \iota(\E) \cap \Mat_2(\vo) = \iota(\vO_r), $$
such that $\iota$ is conjugate by $\gp{K}$ to $\iota_r$.
\label{LogCondEmbd}
\end{proposition}	
\begin{proof}
	By the Skolem-Noether theorem \cite[THEOREME \Rmnum{2}.2.1]{Vi80}, $\iota$ is conjugate to $\iota_0$ by some element of $\GL_2(\F)$. The assertion then follows readily from (\ref{Iwa}).
\end{proof}
		
		\subsubsection{$v=\vp < \infty$, $\E_{\vp}$ non-split.}
		\label{ClEmb}
		
	$v$ extends to a unique valuation $v_{\E}$ on $\E$ with uniformiser $\varpi_{\E}$. By Definition on \cite[p.44]{Vi80}, there is $\beta \in \vO - \vo$ such that
\begin{equation}
	\vO = \vo + \beta \vo,
\label{Beta}
\end{equation}
and any other order of $\E$ can be written as $\vO_r = \vo + \varpi^r \beta \vo$ for a unique $r \in \N$ called the \textbf{(logarithmic) conductor} of the order. $\beta$ is a root of an irreducible polynomial $X^2-\fb X+\fa$ in $\vo[X]$. For any $x \in \E$, let $\iota(x) \in \Mat_2(\F)$ be the matrix such that
	$$ x(1, \beta) = (1, \beta)\iota(x). $$
Note that the choice of $\beta$ is not unique. All (other) possible choices are $\beta'=u+v\beta$ for $u \in \vo$ and $v \in \vo^{\times}$. Hence
	$$ \iota'(\beta') = \begin{pmatrix} & -\fa' \\ 1 & \fb' \end{pmatrix} \left( = \begin{pmatrix} 1 & u \\ & v \end{pmatrix}^{-1} \iota(\beta') \begin{pmatrix} 1 & u \\ & v \end{pmatrix} \right), \quad \fb'=2u+v\fb, \fa'=u^2+uv\fb+v^2\fa. $$
If $\E/\F$ is unramified, then $X^2-\fb X+\fa \pmod{\vp}$ is irreducible, hence $v(\fa)=0=v_{\E}(\beta)$. If $\E/\F$ is ramified, then $X^2-\fb X+\fa \pmod{\vp}$ is a square. If $v(\fa)>0$ then $v_{\E}(\beta)>0$. But if $v_{\E}(\beta) \geq 2$ then $\beta/\varpi \in \vO$, which contradicts (\ref{Beta}). Hence $v(\fa)=v_{\E}(\beta) \in \{0,1\}$. But if $v(\fa')=0$, then we can find $u_0 \in \vo^{\times}$ such that
\begin{equation}
	X^2 - \fb' X + \fa' \equiv (X-u_0)^2 \pmod{\vp}.
\label{u0}
\end{equation}
	By the above remark on the choice of $\beta$, we can replace $\beta'$ with $\beta = \beta'-u_0$ so that $v(\fa) = 1$, hence $\beta = \varpi_{\E}$ is an uniformizer. Reciprocally, we can pass from $\beta$ with $v(\fa)=0$ to $\beta'$ with $v(\fa') = 1$. In any case, we choose $\beta$ and define
	$$ \iota_0(\beta) = \begin{pmatrix} & -\fa \\ 1 & \fb \end{pmatrix}, \quad v(\fa) = e(\E/\F)-1. $$
	For any $r \in \N$, we define an embedding $\iota_r$ by
\begin{equation}
	\iota_r(t) = a(\varpi^{-r}) \iota_0(t) a(\varpi^{r}), \quad \forall t \in \E.
\label{CarEmbNS}
\end{equation}
	In particular, we have
	$$ \iota_r(\beta) = \begin{pmatrix} 0 & -\fa \varpi^{-r} \\ \varpi^{r} & \fb \end{pmatrix}, $$
and if we define the Eichler orders
	$$ \oM = \Mat_2(\vo), \quad \oJ = \begin{pmatrix} \vo & \vo \\ \vp & \vo \end{pmatrix}, $$
then we have
	$$ \iota_r(\E) \cap \oM = \begin{matrix} \iota_r(\vO_r) \\ \iota_r(\vO_{\max(r-1,0)}) \end{matrix}, \quad \begin{matrix} \text{if } \E \text{ is unramified,} \\ \text{if } \E \text{ is ramified.} \end{matrix}. $$
In other words, $\vO_r$ resp. $\vO_{\max(r-1,0)}$ is optimally embedded in $\oM$ via $\iota_r$ in the sense of Section \Rmnum{2}.3 \cite{Vi80}.
\begin{proposition}
	Proposition \ref{LogCondEmbd} is still valid if we replace the equation by
	$$ \iota(\E) \cap \oM = \begin{matrix} \iota(\vO_r) \\ \iota(\vO_{r-1}) \end{matrix}, \quad \begin{matrix} \text{if } \E \text{ is unramified,} \\ \text{if } \E \text{ is ramified.} \end{matrix} $$
	In particular, the conductor of an embedding is $\geq e(\E/\F)-1$.
\end{proposition}
\begin{proof}
	We claim that
\begin{equation}
	\GL_2(\F) = \bigsqcup_{r=0}^{\infty} \iota_0(\E^{\times}) \begin{pmatrix} 1 & \\ & \varpi^r \end{pmatrix} \GL_2(\vo) = \bigsqcup_{r=e(\E/\F)-1}^{\infty} \iota_0(\E^{\times}) \begin{pmatrix} \varpi^r & \\ & 1 \end{pmatrix} \GL_2(\vo).
\label{IwaforE}
\end{equation}
	In fact, since any $\vo$-lattice in $\E$ is principal for some order $\vO_r$, the first equality follows. For any integer $r \geq 0$, we have
	$$ \begin{pmatrix} 1 & \\ & \varpi^{r+v(\fa)} \end{pmatrix} \iota_0(1+\varpi^{-r-v(\fa)}\beta) \begin{pmatrix} 1 & \\ & \varpi^r \end{pmatrix} = \begin{pmatrix} 1 & -\fa \varpi^{-v(\fa)} \\ 1 & \varpi^{2r+v(\fa)}+ \varpi^r \fb \end{pmatrix} \in \GL_2(\vo) $$
	$$ \Rightarrow \begin{pmatrix} 1 & \\ & \varpi^{r} \end{pmatrix} \in \iota_0(\E^{\times}) \begin{pmatrix} \varpi^{r+v(\fa)} & \\ & 1 \end{pmatrix} \GL_2(\vo), $$
implying the second equality. Now that every $\F$-embedding $\iota$ is conjugate to $\iota_0$ by some element in $\GL_2(\F)$ (Skolem-Noether), the assertion follows from (\ref{IwaforE}).
\end{proof}
\begin{remark}
	It would also be useful to keep the (other) choice (in the case $\E/\F$ is ramified), namely
	$$ \iota_0'(\beta') = \begin{pmatrix} \fb' & 1 \\ -\fa' & \end{pmatrix}, \quad v(\fa') = 0; \quad \iota_r'(\beta') := a(\varpi^r) \iota_0'(\beta') a(\varpi^{-r}) = \begin{pmatrix} \fb' & \varpi^r \\ -\fa' \varpi^{-r} & \end{pmatrix}. $$
	(\ref{IwaforE}) is still valid for $\iota_0'$ if we replace $e(\E/\F)-1$ ($=1$) with $0$. The same proof works. $\iota_r'$ is an optimal embedding of $\vO_r$ into $\oM$.
\label{AltEmb}
\end{remark}

	\subsection{Some Technical Computation}
	
	We omit the subscript $v=\vp$ in this subsection. In the previous subsection, we have defined an order $\vO_r$ of $\E$ for each $r \in \Z_{\geq 0}$. For our purpose, the following sets are interesting:
\begin{equation}
	\begin{matrix} 
		\vO_r^{(n)} = \left\{ t \in \vO_r - \varpi \vO_r : \Nr_{\F}^{\E}(t) \in \varpi^n \vo^{\times} \right\}, \quad \vO_r^{(\leq n)} = \bigsqcup_{m=0}^n \vO_r^{(m)}, \quad n \in \Z_{\geq 0}; \\
		\vO_{(r)}^{(n)} = \left\{ t \in \vO_r - \vO_{r+1} : \Nr_{\F}^{\E}(t) \in \varpi^n \vo^{\times} \right\}.
	\end{matrix}
\label{ArithQuadSet}
\end{equation}
\begin{remark}
	If $\E/\F$ is non split, then since $\varpi \vO_r \subset \vO_{r+1}$ and
	$$ \vO_{r+1} - \varpi \vO_r = \vo^{\times} + \vp^{r+1} \beta = \vO_{r+1}^{\times}, $$
	$\vO_r^{(n)}$ and $\vO_{(r)}^{(n)}$ differ only at $n=0$ and we have
	$$ \vO_r^{(0)} = \vO_{(r)}^{(0)} \sqcup \vO_{r+1}^{\times} = \vO_r^{\times}. $$
\end{remark}
\begin{lemma}
	The sets $\vO_r^{(n)}$ can be characterized as follows:
	\begin{itemize}
		\item[(1)] If $\E/\F$ is split, then for $n \in \Z_{\geq 0}$
		$$ \vO_r^{(n)} = \left\{ \begin{matrix} \vo^{\times}(1+\vp^r,1) & \text{if } n=0 \\ \emptyset & \text{if } 2 \nmid n, n < 2r \\ \varpi^{n/2}\vo^{\times}(1+\varpi^{r-n/2}\vo^{\times},1) & \text{if } 2 \mid n, 0 < n \leq 2r \\ (\varpi^r\vo^{\times},\varpi^{n-r}\vo^{\times}) \cup (\varpi^{n-r}\vo^{\times},\varpi^r\vo^{\times}) & \text{if }n > 2r \end{matrix} \right. . $$
		\item[(2)] If $\E/\F$ is an unramified field extension, then for $n \in \Z_{\geq 0}$
		$$ \vO_{(r)}^{(n)} = \left\{ \begin{matrix} \emptyset & \text{if } 2 \nmid n \text{ or } n > 2r \\ \varpi^{n/2}(\vO_{r-n/2}^{\times} - \vO_{r-n/2+1}^{\times}) & \text{if }2 \mid n, 0 \leq n \leq 2r \end{matrix} \right. . $$
		\item[(3)] If $\E/\F$ is a ramified field extension, then for $n \in \Z_{\geq 0}$
		$$ \vO_{(r)}^{(n)} = \left\{ \begin{matrix} \emptyset & \text{if } n > 2r+1 \text{ or } 0 \leq n \leq 2r,2 \nmid n \\ \varpi^{n/2}(\vO_{r-n/2}^{\times} - \vO_{r-n/2+1}^{\times}) & \text{if }2 \mid n, 0 \leq n \leq 2r \\ \varpi_{\E}^{2r+1} \vO^{\times} & \text{if } n=2r+1. \end{matrix} \right. $$
	\end{itemize}
\label{LIDecomp}
\end{lemma}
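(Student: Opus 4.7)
The plan is to exploit the explicit parametrization of $\vO_r$ developed in Section 2.5. In the non-split cases one has $\vO_r = \vo + \varpi^r \beta \vo$ with $\beta^2 - \fb \beta + \fa = 0$, where $v(\fa) = 0$ in the unramified case and $v(\fa) = 1$ in the ramified case. In the split case, $\beta = (1,0)$ so $\vO_r = \{(a+\varpi^r b, a) : a, b \in \vo\}$. In both settings, every $t \in \vO_r$ is uniquely written as $t = a + \varpi^r b \beta$ with $a, b \in \vo$, and an inspection of $\varpi \vO_r = \varpi \vo + \varpi^{r+1} \vo \beta$ yields the criterion
$$ t \in \vO_r - \varpi \vO_r \iff a \in \vo^\times \text{ or } b \in \vo^\times. $$
The norm reads $\Nr(t) = a(a + \varpi^r b)$ in the split case, respectively $\Nr(t) = a^2 + \varpi^r ab \fb + \varpi^{2r} b^2 \fa$ in the non-split cases.

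The proof then splits into Case~A with $a \in \vo^\times$ and Case~B with $a \in \varpi \vo$ (write $k := v(a) \geq 1$) and $b \in \vo^\times$. Case~A immediately gives $\Nr(t) \in \vo^\times$ and $t \in \vO_r^\times$, producing the $n=0$ entry of each part. For Case~B one compares the three terms of the norm: in the split case one finds $n = 2k$ for $k < r$, $n = 2r$ for $k = r$, and $n = k + r$ for $k > r$, while interchanging the roles of the two coordinates (admissible whenever $v(t_1) \neq v(t_2)$) produces the symmetric pieces of part~(1) in the range $n > 2r$; in the non-split cases the terms have valuations $2k$, $r+k$, $2r + v(\fa)$, yielding $n = 2k$ for $k < r$, $n = 2r$ for $k = r$, and, for $k > r$, $n = 2r$ in the unramified setting and $n = 2r + 1$ in the ramified setting. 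The absence of odd $n$ in the unramified case is consistent with $v_\F \circ \Nr = 2 v_\E$.

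To match the compact descriptions in the lemma, I translate each subcase back into the orders $\vO_m$. For $0 < n = 2j \leq 2r$, Case~B with $k = j$ writes $t = \varpi^j(a' + \varpi^{r-j} b \beta)$ with $a', b \in \vo^\times$; the identity $\vO_m^\times = \{a + \varpi^m c \beta : a \in \vo^\times, c \in \vo\}$ for $m \geq 1$, obtained by computing $\vO_m / \varpi \vO_m \simeq (\vo/\varpi)[\epsilon]/\epsilon^2$ with $\epsilon = \varpi^m \beta$, shows the factor in parentheses belongs to $\vO_{r-j}^\times \setminus \vO_{r-j+1}^\times$ (the boundary case $j = r$ being handled by a separate direct check using the DVR structure of $\vO$). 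For $n = 2r+1$ in the ramified case, Case~B with $k > r$ combined with $v_\E(t) = v_\F(\Nr(t)) = 2r+1$ forces $t \in \varpi_\E^{2r+1}\vO^\times$; the reverse inclusion follows from $\beta^2 = \fb\beta - \fa$ together with $v(\fa) = 1$, verifying that every element of $\vO$ of $v_\E$-valuation $2r+1$ indeed lies in $\vO_r$.

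The main obstacle is the ramified boundary analysis around $k \geq r$, which requires $v(\fb) \geq 1$ to control the cross term $\fb \varpi^{r+k} ab$. This inequality comes from $v_\E(\fb) = v_\E(\beta + \bar\beta) \geq 1$ together with the fact that the $v_\E$-valuation of any element of $\vo$ is even in the ramified setting; the two force $v_\F(\fb) \geq 1$ uniformly in the residue characteristic. Once this is in place, the remaining verifications reduce to a routine finite bookkeeping matching the three formulae term by term.
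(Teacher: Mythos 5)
Your overall strategy — parametrize $\vO_r = \vo + \varpi^r\beta\vo$, compute the norm $\Nr(a+\varpi^r b\beta)=a^2+\fb\varpi^r ab+\fa\varpi^{2r}b^2$, and compare valuations of the three terms — is the natural one, and indeed the one the paper has in mind when it dismisses the lemma as ``a simple exercise in field theory.'' The identification $\vO_m^\times=\{a+\varpi^m c\beta: a\in\vo^\times, c\in\vo\}$ for $m\ge1$ and the use of $v_\F\circ\Nr = 2v_\E$ (unramified) resp. $=v_\E$ (ramified) to control parity are both correct and useful.

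However, there is a genuine discrepancy with the source that affects the logic of your argument. You assert that Section 2.5 normalizes $\beta$ so that $v(\fa)=1$ in the ramified case; the paper does the opposite — after equation (2.4) it explicitly observes that $v(\fa)=v_\E(\beta)\in\{0,1\}$ and \emph{fixes a $\beta$ with $v(\fa)=0$}. With that choice $\beta$ is a unit of $\vO$, and, since the residue extension is trivial in the ramified case, $\bar\beta\equiv\beta\pmod{\varpi_\E}$ and so $\fb=\beta+\bar\beta\equiv 2\beta$; for odd residue characteristic this is a \emph{unit}, so your claimed inequality $v_\F(\fb)\ge1$ fails. Consequently, at $k=r$ all three terms $a^2$, $\fb\varpi^r ab$, $\fa\varpi^{2r}b^2$ have the same $\F$-valuation $2r$, and whether the sum sits in $\varpi^{2r}\vo^\times$ or $\varpi^{2r+1}\vo^\times$ is decided not by term-by-term valuation comparison but by whether $a_0+b\beta$ (with $a=\varpi^r a_0$) is a unit of $\vO$, i.e.\ by a reduction-mod-$\varpi_\E$ criterion. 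In particular the value $n=2r+1$ in the ramified case arises from $k=r$, not from $k>r$ as you state; the $k>r$ range contributes only to $n=2r$ under the paper's normalization. Your argument \emph{is} internally consistent if one switches entirely to $v(\fa)=1$ (there $v_\E(\fb)\ge1$ does hold by your parity argument, and then $k>r$ is indeed the source of $n=2r+1$); since the sets $\vO_r^{(n)}$ and the groups $\vO_m^\times$ are intrinsic, that choice is legitimate — but then the opening sentence, which attributes the $v(\fa)=1$ normalization to Section 2.5, must be corrected.

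Separately, the claim ``$n=2r$ for $k=r$'' in the split case is not quite right: there $\Nr(t)=\varpi^{2r}a_0(a_0+b)$ and $a_0+b$ may land in $\vp$, pushing $n$ above $2r$. This is the same boundary phenomenon as in the non-split ramified case, and it needs an analogous separate argument (in the split case it is also where one must be careful matching the description $\varpi^r\vo^\times(1+\vo^\times,1)$ against the $n>2r$ formulas; the two descriptions as literally written overlap). None of these defeat the method, but as written the proposal has these concrete gaps rather than being a complete proof.
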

\begin{proof}
	This is a simple exercise in field theory. We omit the proof.
\end{proof}
\begin{corollary}
	$\vO_r^{\times}$ acts on $\vO_r^{(n)}$ resp. $\vO_{(r-1)}^{(n)}$ (if $r \geq 1$) by multiplication with finite orbits. The cardinality of $\vO_r^{\times} \backslash \vO_r^{(n)}$ resp. $\vO_r^{\times} \backslash \vO_{(r-1)}^{(n)}$ is:
	\begin{itemize}
		\item[(1)] If $\E/\F$ is an unramified field extension, then for $n \in \Z_{\geq 0}$
		$$ |\vO_r^{\times} \backslash \vO_r^{(n)}| = \left\{ \begin{matrix} 0 & \text{if } 2 \nmid n \text{ or } n > 2r \\ 1 & \text{if } n=0 \\ q^r & \text{if }n=2r \\ q^{n/2}(1-q^{-1}) & \text{if }2 \mid n, 0 < n < 2r \end{matrix} \right. ; $$
		$$ \extnorm{ \vO_r^{\times} \backslash \vO_{(r-1)}^{(n)} } = \left\{ \begin{matrix} 0 & \text{if } 2 \nmid n \text{ or } n > 2(r-1) \\ q^r & \text{if } n = 2(r-1) \\ q^{n/2}(q-1) & \text{if } 2 \mid n, 0 \leq n < 2(r-1) \end{matrix} \right. . $$
		\item[(2)] If $\E/\F$ is a ramified field extension, then for $n \in \Z_{\geq 0}$
		$$ |\vO_r^{\times} \backslash \vO_r^{(n)}| = \left\{ \begin{matrix} 0 & \text{if } n > 2r+1 \text{ or } 0 \leq n \leq 2r,2 \nmid n \\ 1 & \text{if } n=0 \\ q^{n/2}(1-q^{-1}) & \text{if }2 \mid n, 0 < n \leq 2r \\ q^r & \text{if } n=2r+1 \end{matrix} \right. ; $$
		$$ \extnorm{ \vO_r^{\times} \backslash \vO_{(r-1)}^{(n)} } = \left\{ \begin{matrix} 0 & \text{if } n \geq 2r  \text{ or } 0 \leq n \leq 2(r-1), 2 \nmid n \\ q^r & \text{if } n = 2r-1 \\ q^{n/2}(q-1) & \text{if } 2 \mid n, 0 \leq n \leq 2(r-1) \end{matrix} \right. . $$
	\end{itemize}
\label{LIQuotient}
\end{corollary}
\begin{proof}
	We only need to calculate $|\vO_{r+n}^{\times} \backslash \vO_r^{\times}|$ for $r\in \Z_{\geq 0}, n \geq 1$ and insert it to the lemma. Let
	$$ U_{\F}^{(r)} = 1+\varpi^r\vo, r>0; \quad U_{\F}^{(0)}=\vo^{\times}. $$
	$$ U_{\E}^{(r)} = 1+\varpi^r\vO, r>0; \quad U_{\E}^{(0)}=\vO^{\times}. $$
Since $\vO_r^{\times}=\vo^{\times}U_{\E}^{(r)}, \vo^{\times} \cap U_{\E}^{(r)}=U_{\F}^{(r)}$, we get
	$$ |U_{\E}^{(r)} \backslash \vO_r^{\times}| = |U_{\F}^{(r)} \backslash \vo^{\times}| = \left\{ \begin{matrix} q^r(1-q^{-1}) & \text{if }r>0; \\ 1 & \text{if }r=0. \end{matrix} \right. $$
It is easy to see
	$$ |U_{\E}^{(r+n)} \backslash U_{\E}^{(r)}| = \left\{ \begin{matrix} q^{2n} & \text{if }r>0; \\ q^{2n}(1-q^{-2}) & \text{if }r=0, \E/\F \text{ unramified;} \\ q^{2n}(1-q^{-1}) & \text{if }r=0, \E/\F \text{ ramified.} \end{matrix} \right. $$
We deduce that
\begin{align*}
	& |\vO_{r+n}^{\times} \backslash \vO_r^{\times}| = \frac{|U_{\E}^{(r)} \backslash \vO_r^{\times}| \cdot |U_{\E}^{(r+n)} \backslash U_{\E}^{(r)}|}{|U_{\E}^{(r+n)} \backslash \vO_{r+n}^{\times}|} \\
	& = \left\{ \begin{matrix} q^n & \text{if }r>0 \text{ or }\E/\F \text{ ramified;} \\ q^n(1+q^{-1}) & \text{if }r=0, \E/\F \text{ unramified.} \end{matrix} \right.
\end{align*}
\end{proof}

\begin{lemma}
	Let $c \geq 1, r \geq 0$ be integers. Then if $r>0$, we have independently of $c$
\begin{equation}
	\gp{K}_0(\vp^c)\iota_r'(\vO_r^{\times}) = \bigsqcup_{u \in \vo/\vp^c} \gp{K}_0(\vp^c) \begin{pmatrix} 1 & \\ u & 1 \end{pmatrix} = \left\{ \begin{pmatrix} c_1 & c_2 \\ c_3 & c_4 \end{pmatrix} \in \gp{K} \mid c_4 \in \vo^{\times} \right\};
\label{SppS1}
\end{equation}
	while if $r=0$, we have independently of $c$
\begin{equation}
	\gp{K}_0(\vp^c)\iota_0'(\vO^{\times}) = \left\{ \begin{matrix} \gp{K} & \text{if } \E/\F \text{ is unramified;} \\ \gp{K} - \bigsqcup_{u \in u_0 + \vp/\vp^c} \gp{K}_0(\vp^c) \begin{pmatrix} 1 & \\ u & 1 \end{pmatrix} & \text{if } \E/\F \text{ is ramified.} \end{matrix} \right.
\label{DecompK}
\end{equation}
\end{lemma}
\begin{proof}
	Recall the coset decomposition
	$$ \gp{K}= \bigsqcup_{u \in \vo/\vp^c} \gp{K}_0(\vp^c) \begin{pmatrix} 1 & \\ u & 1 \end{pmatrix} \bigsqcup \bigsqcup_{v \in \vp/\vp^c} \gp{K}_0(\vp^c) \begin{pmatrix} 1 & \\ v & 1 \end{pmatrix} w. $$
	If $r > 0$, note that
\begin{equation}
	\vO_r^{\times} = \vo^{\times} (1+\varpi^r \vO) = \vo^{\times}+\varpi^r\vo \beta'.
\label{vOr*}
\end{equation}
	For $x \in \vo^{\times}, y \in \vo$ we have
	$$ \iota_r'(x+\varpi^r \beta' y) \in \gp{K}_0(\vp^c) \begin{pmatrix} 1 & \\ -\fa' y/x & 1 \end{pmatrix}, $$
	which implies (\ref{SppS1}). If $r=0$, we have
\begin{equation}
	\vO^{\times} = \left\{ \begin{matrix} \{ x+y\beta' \mid \min(v(x),v(y)) = 0 \} & \text{if } \E/\F \text{ is unramified;} \\ \{ x+y\beta' \mid \min(v(x),v(y)) = 0, -x/y \neq \beta' \pmod{\vP} \} & \text{if } \E/\F \text{ is ramified.} \end{matrix} \right.
\label{vO0*}
\end{equation}
Since $\beta' \neq 0 \pmod{\vP}$ by our choice and
	$$ \gp{K}_0(\vp^c)\begin{pmatrix} 1 & \\ v & 1 \end{pmatrix} w = \gp{K}_0(\vp^c)\iota_0'(x+y\beta'), \quad \text{ if } x\in \vo, y\in \vo^{\times}, x/(\fa' y) = v \pmod{\vp^c}, $$
	$$ \gp{K}_0(\vp^c) \begin{pmatrix} 1 & \\ u & 1 \end{pmatrix} = \gp{K}_0(\vp^c)\iota_0'(x+y\beta'), \quad \text{ if } x\in \vo^{\times}, y\in \vo, -\fa' y/x = u \pmod{\vp^c}, $$
	we obtain (\ref{DecompK}).
\end{proof}

	\subsection{Inner Product for Special Representations}
	
	We record a relation between the inner product on special representations for $\GL_2$ over a non-archimedean local field $\F$ and those on complementary series representations.
\begin{definition}
	Let $\pi = \pi_{1/2} = \mathrm{St}_{\chi}$ be a special representation with $\pi_{\sigma} = \chi \otimes \pi(\norm^{\sigma}, \norm^{-\sigma}), 0 < \sigma < 1/2$ its continuous deformation into a family of complementary series. We call a section $f(\sigma) \in \pi_{\sigma}$ \emph{admissible} if its $\gp{K}$-isotypic parts does not contain a $\chi \circ \det$ component, i.e., if
	$$ \int_{\gp{K}} f(\sigma, \kappa) \chi(\det \kappa)^{-1} d\kappa = 0. $$
\label{AdmS}
\end{definition}
\begin{lemma}
	Let notations be as in Definition \ref{AdmS}. Then there exists a function $c(\sigma)$ such that for $f_1, f_2 \in \pi$ realized as functions in the induced model and for $f_1(\sigma) \in \pi_{\sigma}$ resp. $f_2(\sigma) \in \pi_{\sigma}$ any continuous admissible section with $f_1(1/2) = f_1$ resp. $f_2(1/2) = f_2$, we have
	$$ \Pairing{f_1}{f_2} = \lim_{\sigma \to 1/2-} c(\sigma) \Pairing{f_1(\sigma)}{f_2(\sigma)}_{\sigma} $$
where the pairing indexed by $\sigma$ is the standard one on $\pi_{\sigma}$ defined by
	$$ \Pairing{f_1(\sigma)}{f_2(\sigma)}_{\sigma} := \int_{\gp{K}} f_1(\sigma, \kappa) \overline{ \IntwR f_2 (\sigma, \kappa)} d\kappa, $$
with the normalized intertwining operator $\IntwR: \pi_{\sigma} \to \pi_{-\sigma}$ stabilizing the function $\gp{K} \to \C, \kappa \to \chi(\det \kappa)$.
\label{InnPSt}
\end{lemma}
\begin{proof}
	If $f_1(\sigma), f_2(\sigma)$ are flat sections, this is just \cite[\S 1.20 (300)]{Go70}. It is even possible to choose $c(\sigma) = (1-q^{\sigma-1/2})^{-1}$ \cite[\S 1.20 (304)]{Go70}. To prove the general case, we first notice that we can assume $\chi=1$ ($\pi=\mathrm{St}$) by replacing $f_j(\sigma)$ with $f_j(\sigma) \cdot (\chi \circ \det)^{-1}$ if necessary. If $f \in \pi(\norm^{1/2}, \norm^{-1/2})$ is a function in the induced model, the condition for $f \in \pi$ \cite[\S 1.20 (298)]{Go70} is equivalent to that the $\gp{K}$-isotypic components of $f$ does not contain the trivial representation, because if the measures are suitably normalized we have
	$$ \int_{\gp{K}} f(\kappa) d\kappa = \int_{\gp{B} \backslash \GL_2} f(g) dg = \int_{\F} f(w^{-1} n(x)) dx. $$
	So is the admissibility of sections. Let $1 \neq \tau \in \widehat{\gp{K}}$ run over non trivial irreducible representations of $\gp{K}$ and $e \in \Bas(\tau)$ run over an orthonormal basis of $\tau$. We can decompose
	$$ f_j(\sigma) = \sum_{\tau \neq 1} \sum_e f_j^{(\tau,e)}(\sigma), \quad j=1,2 $$
	by Peter-Weyl theorem, where the summation is finite and
	$$ f_j^{(\tau,e)}(\sigma, g) = \int_{\gp{K}} f_j(\sigma, g\kappa) d_{\tau} \Pairing{e}{\kappa.e} d\kappa, \quad d_{\tau} := \dim \tau. $$
	But multiplicity one holds for the branching law of $\pi_{\sigma}$ restricted to $\gp{K}$. Hence if we write $f_{j,\sigma}$ for the flat section of $f_j$ then we get
	$$ f_j^{(\tau,e)}(\sigma) = \left\{ \begin{matrix} c_j^{(\tau,e)}(\sigma) \cdot f_{j,\sigma}^{(\tau,e)} & \text{if } f_{j,1/2}^{(\tau,e)} \neq 0 \\ \to 0 \text{ as } \sigma \to 1/2- & \text{ if } f_{j,1/2}^{(\tau,e)} = 0, \end{matrix} \right. $$
	for some continuous functions $c_j^{(\tau,e)}(\sigma)$ with $c_j^{(\tau,e)}(1/2)=1$. Consequently, as $\sigma \to 1/2-$
	$$ c(\sigma) \Pairing{f_1(\sigma)}{f_2(\sigma)}_{\sigma} = \sideset{}{_{\tau \neq 1,e}^{'}} \sum c(\sigma) m(\sigma,\tau) c_1^{(\tau,e)}(\sigma) \overline{c_2^{(\tau,e)}(\sigma)} \int_{\gp{K}} f_{1,\sigma}^{(\tau,e)}(\kappa) \overline{f_{2,-\sigma}^{(\tau,e)}(\kappa)} d\kappa + o(1), $$
where the summation is over those $\tau,e$ such that $f_{j,1/2}^{(\tau,e)} \neq 0, j=1,2$, and where $m(\sigma,\tau)$ is the ``eigenvalue'' of $\IntwR$ on flat sections of elements in $\tau$ determined by
	$$ \IntwR f_{\sigma} = m(\sigma,\tau) f_{-\sigma}, \quad f \text{ is } \tau-\text{isotypic}. $$
	$m(\sigma,\tau)$ are computed in \cite[Lemma 3.18 (4)]{Wu5} or \cite[Corollary 4.12]{Wu2}, which all have a zero at $\sigma=1/2$ of order $1$ since $\tau \neq 1$. Hence $c(\sigma) \Pairing{f_1(\sigma)}{f_2(\sigma)}_{\sigma}$ has the same limit (as $\sigma \to 1/2-$) with
	$$ c(\sigma) \Pairing{f_{1,\sigma}}{f_{2,\sigma}}_{\sigma} = \sideset{}{_{\tau \neq 1,e}^{'}} \sum c(\sigma) m(\sigma,\tau) \int_{\gp{K}} f_{1,\sigma}^{(\tau,e)}(\kappa) \overline{f_{2,-\sigma}^{(\tau,e)}(\kappa)} d\kappa. $$
\end{proof}

	\subsection{Classification of Supercuspidal Representations}
	
	We recall briefly the necessary information on the classification of supercuspidal representations $\pi$ of $\GL_2$ at a finite place $v=\vp < \infty$. For more details, we refer to \cite[Section 5.1-5.4]{FMP} or \cite[Chapter 4]{BuH06}. We shall omit the subscript $\vp$, and fix an additive character $\tilde{\psi}$ trivial on $\vp$ but not on $\vo$, according to the convention of people working on local Langlands. One should not confuse it with $\psi$ that we choose in the main argument.
	
		\subsubsection{Type 0 minimal supercuspidal}
		
	There is a representation $\rho$ of $\gp{J}=\gp{Z}\gp{K}$ inflated from a cuspidal representation $\tilde{\rho}$ of $\GL_2(\vo/\vp)$, i.e., $\rho$ is trivial on $\gp{K}(\vp)$ and factors through $\tilde{\rho}$. We have
	$$ \pi \simeq \cInd_{\gp{J}}^{\GL_2} \rho. $$	
Consequently, we have
	$$ \rho \mid_{\gp{Z}} = \omega_{\pi}, \quad \cond(\omega_{\pi}) \leq 1. $$

	The character table of $\tilde{\rho}$ is given in \cite[Section 6.4.1]{BuH06} or \cite[Proposition 5.1]{FMP}. We do not need the full information about the table but its restriction to $n_-(\vo)$. We have
	$$ \Tr \tilde{\rho}(n_-(x)) = \left\{ \begin{matrix} -1 & \text{if }x \notin \vp, \\ q-1 & \text{if }x \in \vp, \end{matrix} \right. $$
from which we deduce
\begin{equation}
	\rho \mid_{n_-(\vo)} \simeq \bigoplus_{u \in \vo^{\times}/(1+\vp)} \tilde{\psi}_u, \quad \text{ with } \tilde{\psi}_u(x) := \tilde{\psi}(ux).
\label{BLrhoN-}
\end{equation}
In other words, there is an orthonormal basis $\{ e_u \}_{u \in \vo^{\times}/(1+\vp)}$ of $\rho$ such that
	$$ \rho(n_-(x)).e_u = \tilde{\psi}(ux)e_u. $$
	
		\subsubsection{Type 1 minimal supercuspidal}
		\label{T1MS}
		
	There exist $\fa_0 \in \vo^{\times}, \fa_1 \in \vo$ such that
	$$ \alpha = \begin{pmatrix} 0 & 1 \\ \fa_0 & \fa_1 \end{pmatrix} $$
generates an unramified field extension $\bL/\F$ in $\Mat_2(\F)$ with $\bL=\F[\alpha]$. Its ring of integers is equal to $\vO_{\bL} = \vo[\alpha]$. There is an integer $m \geq 0$ and a character $\lambda$ of the group $\gp{J}=\bL^{\times} \gp{K}(\vp^{m+1})$ with
	$$ \lambda \mid_{\gp{K}(\vp^{m+1})}: \gp{K}(\vp^{m+1}) \to \C^{\times}, \quad x \mapsto \tilde{\psi}(\varpi^{-2m-1} \Tr(\alpha(x-1))). $$
Note that $\lambda$ is trivial on $\gp{K}(\vp^{2m+2})$ but not trivial on $\gp{K}(\vp^{2m+1})$. We have
	$$ \pi \simeq \cInd_{\gp{J}}^{\GL_2} \lambda. $$
Consequently, we must have
	$$ \lambda \mid_{\gp{Z}} = \omega_{\pi}, \quad \cond(\omega_{\pi}) \leq 2m+2. $$
The integer $2m+1$ is called the level/depth of $\pi$. The conductor $\cond(\pi)=4m+4$.
	
	Since $\vO_{\bL}$ is optimally embedded in $\Mat_2(\vo)$ and $\bL/\F$ is unramified, we have
	$$ \bL^{\times} = \gp{Z} \vO_{\bL}^{\times}, \quad \gp{J}^0 := \gp{J} \cap \gp{K} = \vO_{\bL}^{\times} \gp{K}(\vp^{m+1}), \quad \gp{J}=\gp{Z} \gp{J}^0. $$
	
		\subsubsection{Type 2 minimal supercuspidal}
		\label{T2MS}
		
	We have $\fa_0, \fa_1, \alpha, \bL, \vO_{\bL}$ the same as in the Type 1 case. There is a character $\chi$ of $\bL^{\times}$ and an integer $m>0$ such that
	$$ \chi \mid_{1+\varpi^{m+1}\vO_{\bL}}: 1+\varpi^{m+1}\vO_{\bL} \to \C^{\times}, \quad  x \mapsto \tilde{\psi}(\varpi^{-2m}\Tr(\alpha(x-1))). $$
It determines a character $\lambda$ of $\gp{H}^1 = (1+\varpi \vO_{\bL}) \gp{K}(\vp^{m+1})$ by
	$$ \lambda: \gp{H}^1 \to \C^{\times}, ux \mapsto \chi(u) \tilde{\psi}(\varpi^{-2m}\Tr(\alpha(x-1))), \quad \forall u \in 1+\varpi\vO_{\bL}, x \in \gp{K}(\vp^{m+1}). $$
Set $\gp{A}^n := a(1+\vp^n), n \in \N$. $\lambda$ can be extended to a character
	$$ \tilde{\lambda}: \gp{A}^m \gp{H}^1 \to \C^{\times}, yx \mapsto \lambda(x), \quad \forall y \in \gp{A}^m, x \in \gp{H}^1. $$
Let $\gp{J}^1 = (1+\varpi \vO_{\bL}) \gp{K}(\vp^{m}), \gp{J}=\bL^{\times} \gp{K}(\vp^{m})$. Define
	$$ \eta = \Ind_{\gp{A}^m \gp{H}^1}^{\gp{J}^1} \tilde{\lambda}, $$
then $\eta$ is an irreducible representation of $\gp{J}^1$. It has the property that (c.f. \cite[Lemma 15.6]{BuH06})
\begin{equation}
	\eta \mid_{\gp{H}^1} \simeq \lambda^{\oplus q}.
\label{BLetaH^1}
\end{equation}
There is an irreducible representation $\rho$ of $\gp{J}$ such that
	$$ \pi \simeq \cInd_{\gp{J}}^{\GL_2} \rho, \quad \rho \mid_{\gp{J}^1} \simeq \eta. $$
Consequently, we must have
	$$ \rho \mid_{\gp{Z}} = \omega_{\pi}, \quad \cond(\omega_{\pi}) \leq 2m+1. $$
The integer $2m$ is called the level/depth of $\pi$. The conductor $\cond(\pi)=4m+2$.

	We still write $\gp{J}^0= \gp{J} \cap \gp{K} = \vO_{\bL}^{\times} \gp{K}(\vp^m)$.
	
	We need some more information about $\eta$. Since we have
	$$ \gp{J}^1 = \gp{A}^m \gp{H}^1 n(\vp^m), \quad \gp{A}^m \gp{H}^1 \cap n(\vp^m) = n(\vp^{m+1}), $$
we see that $\eta \mid_{n(\vp^m)}$ can be identified as
	$$ \eta \mid_{n(\vp^m)} \simeq \Ind_{n(\vp^{m+1})}^{n(\vp^m)} \lambda. $$
For $x\in \vp^{m+1}$, we have
	$$ \lambda(n(x)) = \tilde{\psi}(\varpi^{-2m} \fa_0 x). $$
Hence, identifying $n(\vp^m)$ with $\vp^m$, we see that $\eta$ is $\Ind_{\vp^{m+1}}^{\vp^m} \tilde{\psi}(\varpi^{-2m} \fa_0 \cdot)$. We deduce
\begin{equation}
	\eta \mid_{n(\vp^m)} \simeq \bigoplus_{u \in 1+\vp^m / 1+\vp^{m+1}} \tilde{\psi}(\varpi^{-2m} \fa_0 u\cdot).
\label{BLeta}
\end{equation}
In other words, there is an orthonormal basis $\{ e_u \}_{u \in 1+\vp^m / 1+\vp^{m+1}}$ of $\eta$ such that
	$$ \eta(n(x)).e_u = \tilde{\psi}(\varpi^{-2m} \fa_0 ux)e_u, \quad \forall x \in \vp^m. $$
	
		\subsubsection{Type 3 minimal supercuspidal}
		\label{T3MS}
		
There is $\fa_0 \in \varpi\vo^{\times}, \fa_1 \in \vp$ and
	$$ \alpha = \begin{pmatrix} 0 & 1 \\ \fa_0 & \fa_1 \end{pmatrix} $$
generates a (totally) ramified field extension $\bL / \F$ in $\Mat_2(\F)$ with $\bL = \F[\alpha]$, and the ring of integers $\vO_{\bL}=\vo[\alpha]$. Write the following Eichler order and its prime element as
	$$ \oJ = \begin{pmatrix} \vo & \vo \\ \vp & \vo \end{pmatrix}, \quad \Pi = \begin{pmatrix} 0 & 1 \\ \varpi & 0 \end{pmatrix}. $$
Let $U_{\oJ}^0=U_{\oJ}=\oJ^{\times}=\gp{K}_0(\vp)$ and $U_{\oJ}^n = 1+\Pi^n \oJ, n \geq 1$. There is an integer $m\geq 0$ and a character $\lambda$ of the group $\gp{J}=\bL^{\times}U_{\oJ}^{m+1}$ with
	$$ \lambda \mid_{U_{\oJ}^{m+1}}: U_{\oJ}^{m+1} \to \C^{\times}, \quad x \mapsto \psi(\varpi^{-m-1} \Tr(\alpha(x-1))). $$
Note that $\lambda$ is trivial on $\gp{K}(\vp^{m+2})$ but not trivial on $\gp{K}(\vp^{m+1})$. We have
	$$ \pi \simeq \cInd_J^G \lambda. $$
Consequently, we must have
	$$ \lambda \mid_{\gp{Z}} = \omega_{\pi}, \quad \cond(\omega_{\pi}) \leq m+2. $$
The half integer $(2m+1)/2$ is called the level/depth of $\pi$. The conductor $\cond(\pi)=2m+3$.

	$\vO_{\bL}$ is optimally embedded in $\oJ$ hence in $\Mat_2(\vo)$ (since $\alpha \in \oJ$). $\alpha$ is a prime element of $\vO_{\bL}$. The normalizer of $\oJ$ in $\GL_2$ is
	$$ \gp{K}_{\oJ} = \langle \Pi \rangle \ltimes \gp{K}_0(\vp). $$
We have
	$$ \bL^{\times}=\alpha^{\Z}\vO_{\bL}^{\times}, \quad \gp{J}^0=\gp{J} \cap \gp{K}_0(\vp) = \gp{J} \cap \oJ^{\times} = \vO_{\bL}^{\times}U_{\oJ}^{m+1}, \quad \gp{J}=\alpha^{\Z} \gp{J}^0. $$
	
		\subsubsection{Non minimal supercuspidal}
There is a minimal supercuspidal $\vartheta$ and a character $\chi$ of $\F^{\times}$, such that
	$$ \pi \simeq \vartheta \otimes (\chi \circ \det ), \quad \cond(\pi)=2\cond(\chi) > \cond(\vartheta). $$

	\subsection{Variants of Cartan Decomposition}
	
	Let $\F = \F_{\vp}$ at a finite place $\vp$. The following two orders
	$$ \oM = \Mat_2(\vo), \quad \oJ = \begin{pmatrix} \vo & \vo \\ \vp & \vo \end{pmatrix} $$
of $\Mat_2(\F)$ as well as their normalizer subgroup in $\GL_2(\F)$
	$$ \gp{K}_{\oM} = \gp{Z} \gp{K} = \gp{Z} \GL_2(\vo), \quad \gp{K}_{\oJ} = \Pi^{\Z} \gp{K}_0(\vp) $$
play important roles in the classification of supercuspidal representations. They also give some variants of Cartan decomposition which are useful for our purpose. We need the following mirabolic subgroups
	$$ \gp{B}_1(\vo) = \left\{ \begin{pmatrix} z & z' \\ 0 & 1 \end{pmatrix}: z \in \vo^{\times}, z' \in \vo \right\}; \quad \gp{B}_2(\vo) = \left\{ \begin{pmatrix} 1 & z' \\ 0 & z \end{pmatrix}: z \in \vo^{\times}, z' \in \vo \right\}; $$
	$$ \gp{B}_3(\vo) = \left\{ \begin{pmatrix} z & 0 \\ z' & 1 \end{pmatrix}: z \in \vo^{\times}, z' \in \vo \right\}; \quad \gp{B}_4(\vo) = \left\{ \begin{pmatrix} 1 & 0 \\ z' & z \end{pmatrix}: z \in \vo^{\times}, z' \in \vo \right\}. $$
\begin{proposition}
	Write $\GL_2 = \GL_2(\F)$ for simplicity. Denote by $a_-(y) := w a(y) w^{-1}$ and by $A^T$ the transpose of $A$.
\begin{itemize}
	\item[(1)] We have
\begin{align*}
	\GL_2 &= \sideset{}{_{l \geq 0}} \bigsqcup \gp{K}_{\oM} a_-(\varpi^l) \gp{K}_{\oM} = \sideset{}{_{l \geq 1}} \bigsqcup \gp{K}_{\oJ} a_-(\varpi^l) \gp{K}_{\oJ}^T \\
	&= \sideset{}{_{l \geq 1}} \bigsqcup \gp{K}_{\oJ} a_-(\varpi^l) \gp{K}_{\oM} = \sideset{}{_{l \geq 1}} \bigsqcup \gp{K}_{\oM} a_-(\varpi^l) \gp{K}_{\oJ}^T.
\end{align*}
	Moreover, for any $l \geq 1$, we have
	$$ \gp{K}_{\oM} a_-(\varpi^l) \gp{K}_{\oJ}^T \subset \gp{K}_{\oM} a_-(\varpi^l) \gp{K}_{\oM} \bigsqcup \gp{K}_{\oM} a_-(\varpi^{l-1}) \gp{K}_{\oM}, $$
	$$ \gp{K}_{\oJ} a_-(\varpi^l) \gp{K}_{\oJ}^T \subset \gp{K}_{\oJ} a_-(\varpi^l) \gp{K}_{\oM} \bigsqcup \begin{matrix} \gp{K}_{\oJ} a_-(\varpi^{l-1}) \gp{K}_{\oM} & \text{if } l \geq 2 \\ \emptyset & \text{if } l=1. \end{matrix} $$
	\item[(2)] If $\bL = \F[\alpha]$ is an unramifed quadratic extension of $\F$ contained in $\Mat_2(\F)$, defined as in \S \ref{T1MS} \& \ref{T2MS}, then $\bL^{\times} < \gp{K}_{\oM}$ and for any $l \geq 0$ we have
	$$ \gp{K}_{\oM} a_-(\varpi^l) \gp{K}_{\oM} = \bL^{\times} a_-(\varpi^l) \gp{K}_{\oM}, \quad \gp{K}_{\oM} a_-(\varpi^l) \gp{K}_{\oJ}^T = \bL^{\times} a_-(\varpi^l) \gp{K}_{\oJ}^T. $$
	\item[(3)] If $\bL = \F[\alpha]$ is a ramifed quadratic extension of $\F$ contained in $\Mat_2(\F)$, defined as in \S \ref{T3MS}, then $\bL^{\times} < \gp{K}_{\oJ}$ and for any $l \geq 1$ we have
	$$ \gp{K}_{\oJ} a_-(\varpi^l) \gp{K}_{\oM} = \bL^{\times} a_-(\varpi^l) \gp{K}_{\oM}, \quad \gp{K}_{\oJ} a_-(\varpi^l) \gp{K}_{\oJ}^T = \bL^{\times} a_-(\varpi^l) \gp{K}_{\oJ}^T. $$
\end{itemize}
\label{VarCartan}
\end{proposition}
\begin{proof}
\noindent (1) The double coset decomposition with respect to $\gp{K}_{\oM}$ follows directly from the usual Cartan decomposition. From the Iwahori decomposition
	$$ \gp{K} = \gp{K}_0(\vp) \bigsqcup \gp{K}_0(\vp) w \gp{B}_1(\vo) $$
	and the obvious inclusion
\begin{equation}
	a_-(\varpi^{-l}) \gp{B}_1(\vo) a_-(\varpi^l) \subset \gp{K}_0(\vp)^T \subset \gp{K}, \quad l \geq 1
\label{B1ConjInc}
\end{equation}
	we deduce the following decomposition valid for any integer $l \geq 1$
	$$ \gp{K} a_-(\varpi^l) \gp{K} = \gp{K}_0(\vp) a_-(\varpi^l) \gp{K} \bigsqcup \gp{K}_0(\vp) w a_-(\varpi^l) \gp{K}. $$
	Together with the identities valid for any integer $l \geq 1$ (since $\Pi = a_-(\varpi)w$)
	$$ \Pi^{-1} \gp{K}_0(\vp) a_-(\varpi^l) \gp{K} = \gp{K}_0(\vp) \Pi^{-1} a_-(\varpi^l) \gp{K} = \gp{K}_0(\vp) w a_-(\varpi^{l-1}) \gp{K}, $$
	$$ \Pi \gp{K}_0(\vp) w a_-(\varpi^{l-1}) \gp{K} = \gp{K}_0(\vp) \Pi w a_-(\varpi^{l-1}) \gp{K} = \gp{K}_0(\vp) a_-(\varpi^l) \gp{K}, $$
	we deduce the third decomposition with respect to $\gp{K}_{\oJ}$ and $\gp{K}_{\oM}$. The fourth with respect to $\gp{K}_{\oM}$ and $\gp{K}_{\oJ}$ follows by ``transposing'' the third one. It also implies the first relation in the ``moreover'' part. Note that the proof of the third decomposition also implies for any integer $l \geq 1$
	$$ \gp{K} a_-(\varpi^l) \gp{K}_{\oJ}^T = \gp{K}_0(\vp) a_-(\varpi^l) \gp{K}_{\oJ}^T \bigsqcup \gp{K}_0(\vp) w a_-(\varpi^l) \gp{K}_{\oJ}^T, $$
	$$ \Pi^{-1} \gp{K}_0(\vp) a_-(\varpi^l) \gp{K}_{\oJ}^T = \gp{K}_0(\vp) w a_-(\varpi^{l-1}) \gp{K}_{\oJ}^T, \quad \Pi \gp{K}_0(\vp) w a_-(\varpi^l) \gp{K}_{\oJ}^T = \gp{K}_0(\vp) a_-(\varpi^{l+1}) \gp{K}_{\oJ}^T. $$
	Together with the identity
	$$ \Pi^{-1} \gp{K}_0(\vp) a_-(\varpi) \gp{K}_{\oJ}^T = \gp{K}_0(\vp) w \gp{K}_0(\vp)^T = \gp{K}_0(\vp) a_-(\varpi) (\Pi^T)^{-1} \gp{K}_{\oJ}^T = \gp{K}_0(\vp) a_-(\varpi) \gp{K}_{\oJ}^T $$
	we deduce the second decomposition from the fourth one. Note that we can also ``transpose'' the above argument to get the second decomposition from the third one. In particular, we obtain the second relation in the ``moreover'' part.
	
\noindent (2) $\vO_{\bL}$ is optimally embedded in $\Mat_2(\vo)$, implying $\bL^{\times} = \gp{Z} \vO_{\bL}^{\times} < \gp{K}_{\oM}$. By (\ref{B1ConjInc}), both equalities follow easily from
\begin{equation}
	\gp{K}=\vO_{\bL}^{\times} \gp{B}_i(\vo), \quad \vO_{\bL}^{\times} \cap \gp{B}_i(\vo) = \{ 1 \}, \quad i = 1,2,3,4.
\label{KdecompLB1}
\end{equation}
	For any $\begin{pmatrix} a & b \\ c & d \end{pmatrix} \in \gp{K}$, we have $\min(v(a),v(c))=0$, hence $\fa_0 a + c \alpha \in \vO_{\bL}^{\times}$. Thus
\begin{align*}
	& (\fa_0 a + c \alpha)^{-1}\begin{pmatrix} a & b \\ c & d \end{pmatrix} = \begin{pmatrix} \fa_0 a & c \\ \fa_0 c & \fa_0 a + \fa_1 c \end{pmatrix}^{-1} \begin{pmatrix} a & b \\ c & d \end{pmatrix} \in \vo^{\times} \begin{pmatrix} \fa_0 a + \fa_1 c  & -c \\ -\fa_0 c & \fa_0 a \end{pmatrix} \begin{pmatrix} a & b \\ c & d \end{pmatrix} \\
	&\subseteq \vo^{\times} \begin{pmatrix} \frac{\Nr(a+\fa_0^{-1}c\alpha)}{ad-bc} & \frac{ab+\fa_0^{-1}\fa_1 bc - \fa_0^{-1} cd}{ad-bc} \\ 0 & 1 \end{pmatrix} \text{ or } \vo^{\times} \begin{pmatrix} 1 & \frac{ab+\fa_0^{-1}\fa_1 bc - \fa_0^{-1} cd}{\Nr(a+\fa_0^{-1}c\alpha)} \\ 0 & \frac{ad-bc}{\Nr(a+\fa_0^{-1}c\alpha)} \end{pmatrix},
\end{align*}
where $\Nr$ is the norm map for $\bL/\F$. The other cases follow similar argument by noting $\min(v(b),v(d))=0$ and replacing $\fa_0 a + c \alpha$ with $d-b\fa_1 + b \alpha$.

\noindent (3) It is easy to see
	$$ \Pi^{-1} \alpha = \begin{pmatrix} \varpi^{-1} \fa_0 & \varpi^{-1} \fa_1 \\ 0 & 1 \end{pmatrix} \in \gp{K}_0(\vp) \quad \Rightarrow \quad \gp{K}_{\oJ} = \alpha^{\Z} \gp{K}_0(\vp). $$
	$\vO_{\bL}$ is optimally embedded in $\oJ$, implying $\bL^{\times} = \alpha^{\Z} \vO_{\bL}^{\times} \subset \alpha^{\Z} \gp{K}_0(\vp) = \gp{K}_{\oJ}$. By (\ref{B1ConjInc}), both equalities follow easily from
\begin{equation}
	\gp{K}_0(\vp)=\vO_{\bL}^{\times} \gp{B}_i(\vo), \quad \vO_{\bL}^{\times} \cap \gp{B}_i(\vo) = \{ 1 \}, \quad i = 1,2.
\label{KdecompLB3}
\end{equation}
	The proof of (\ref{KdecompLB1}) can be moved here, replacing $\fa_0 a + c \alpha$ with $\varpi^{-1}(\fa_0 a + c \alpha) \in \vO_{\bL}^{\times}$, since $v(a)=0, v(c) \geq 1 = v(\fa_0)$ in this case.
\end{proof}
\begin{remark}
	The following descriptions of some double cosets will be useful.
	$$ \gp{K} a_-(\varpi^l) \gp{K} = \left\{ A \in \oM - \varpi \oM \ |\ \det A \in \varpi^l \vo^{\times} \right\}, \quad l \geq 0; $$
	$$ \gp{K}_0(\vp) a_-(\varpi^l) \gp{K} = \gp{K} a_-(\varpi^l) \gp{K} \cap \Pi \gp{K} a_-(\varpi^{l-1}) \gp{K}, \quad l \geq 1. $$
	The first one is classical and follows from the usual Cartan decomposition. The second one follows easily from the above proof of Proposition \ref{VarCartan} (1).
\label{DCDes}
\end{remark}

	Recall our classification in \S \ref{ClEmb} of embeddings of $\E \hookrightarrow \Mat_2(\F)$, in particular the fact that $\iota_0(\E^{\times}) < \gp{K}_{\oM}$ if $\E$ is unramifed and $\iota_0(\E^{\times}) < \gp{K}_{\oJ}$ if $\E$ is ramifed (proved in the same way for $\bL^{\times}$). We propose to refine the decomposition in Propositions \ref{VarCartan} taking into account the multiplication by $\iota_0(\E^{\times})$.
\begin{lemma}
	Let $l \geq e(\E/\F) \in \{ 1, 2 \}$ be an integer. Let $\bL$ be as in the classification of supercuspidal representations and $\gp{J}, \gp{J}^0$ associated with it. We have
	$$ \begin{matrix} \gp{K}_{\oM} \\ \gp{K}_{\oJ} \end{matrix} a_-(\varpi^l) \begin{matrix} \gp{K}_{\oM} \\ \gp{K}_{\oJ}^T \end{matrix} = \sideset{}{_{u \in \vo^{\times}}} \bigsqcup \bL^{\times} a_-(\varpi^l u) \iota_0(\E^{\times}) = \sideset{}{_{z}} \bigsqcup \gp{J} a_-(\varpi^l z) \iota_0(\E^{\times}), $$
	where we choose the left resp. right group on the LHS as $\gp{K}_{\oM}$ or $\gp{K}_{\oJ}$ resp. $\gp{K}_{\oJ}^T$ to contain $\bL^{\times}$ resp. $\iota_0(\E^{\times})$, and $z$ runs over a system of representatives of $\vo^{\times}/(1+\vp^k)$ with $k$ determined by:
\begin{itemize}
	\item[(1)] $k=m \geq 1$ in the Type 1 resp. Type 2 minimal supercuspidal case of level $2m-1$ resp. $2m$;
	\item[(2)] $k = \IntP{m/2} + 1$ in the Type 3 minimal supercuspidal case of level $m+1/2$.
\end{itemize}
\label{KdecompJ^0}
\end{lemma}
\begin{proof}
	We only treat the decomposition with respect to $z$, the case with respect to $u$ being simpler. The argument for Proposition \ref{VarCartan} (2) \& (3) can be ``transposed'' to yield (generalizing (\ref{IwaforE}))
	$$ \begin{matrix} \gp{K}_{\oM} \\ \gp{K}_{\oJ} \end{matrix} a_-(\varpi^l) \begin{matrix} \gp{K}_{\oM} \\ \gp{K}_{\oJ}^T \end{matrix} = \begin{matrix} \gp{K}_{\oM} \\ \gp{K}_{\oJ} \end{matrix} a_-(\varpi^l) \iota_0(\E^{\times}). $$
	Since we have an identity of double coset decompositions
	$$ \gp{J} \backslash \begin{matrix} \gp{K}_{\oM} \\ \gp{K}_{\oJ} \end{matrix} a_-(\varpi^l) \iota_0(\E^{\times}) / \iota_0(\E^{\times}) \simeq \gp{J}^0 \backslash \begin{matrix} \gp{K} \\ \gp{K}_0(\vp) \end{matrix} / \begin{matrix} \iota_l(\vO_l^{\times}) \\ \iota_l(\vO_{l-1}^{\times}) \end{matrix}, $$
	where $\iota_l(\vO_{l-1}^{\times})$ replaces $\iota_l(\vO_l^{\times})$ if $\E$ is ramified, it suffices to identify a system of representatives for the RHS as $a_-(z)$.
	
\noindent First suppose $\bL$ is unramified. The proofs for Type 1 and Type 2 are the same, hence we may assume Type 1. By (\ref{KdecompLB1}), we have a projection map
	$$ \tau: \gp{K} \simeq \vO_{\bL}^{\times} \times \gp{B}_1(\vo) \to \gp{B}_1(\vo). $$
	Choosing a system of representatives $[\gp{B}_1(\vo)]$ of $\gp{B}_1(\vo) \pmod{\vp^m}$, we get a decomposition
\begin{equation}
	\gp{K} = \sideset{}{_{b \in [\gp{B}_1(\vo)]}} \bigsqcup \gp{J}^0b
\label{KtoJ}
\end{equation}
as well as a map
	$$ \tilde{\tau}: \gp{K} \to [\gp{B}_1(\vo)] $$
determined by $\tilde{\tau}(\kappa) = \tau(\kappa) \pmod{\vp^{m}}$. Let $s=l+1-e(\E/\F)$. If $z_1,z_2 \in \vo^{\times}, t_1, t_2 \in \vO_s^{\times}$ are such that $\tilde{\tau}(a_-(z_1)\iota_l(t_1)) = \tilde{\tau}(a_-(z_2)\iota_l(t_2))$, then $a_-(z_1)\iota_l(t_1t_2^{-1})a_-(z_2)^{-1} \in \gp{J}^0$, i.e., $\tilde{\tau}(a_-(z_1)\iota_l(t_1t_2^{-1})a_-(z_2)^{-1})=1 $, or 
	$$ \tau(a_-(z_1)\iota_l(t_1t_2^{-1})a_-(z_2)^{-1})=1 \pmod{\vp^m}. $$
Writing $t=x+\varpi^s y \beta$ with $x\in \vo^{\times}, y \in \vo$ and using the formula proving (\ref{KdecompLB1}), we identify the $(1,2)$-term of the image of
	$$ a_-(z_1)\iota_l(t)a_-(z_2)^{-1} = \begin{pmatrix} x & \fa \varpi^{s-l} y z_2^{-1} \\ - \varpi^{l+s} y z_1 & (x+\fb \varpi^s y) z_1 z_2^{-1} \end{pmatrix} =: \begin{pmatrix} a & b \\ c & d \end{pmatrix} $$
under $\tau$ as the product of
	$$ ab+\fa_0^{-1}\fa_1 bc - \fa_0^{-1} cd \in b \vo^{\times}, $$
since $\fa_0^{-1} c/b = -\varpi^{2l} \fa_0^{-1} \fa^{-1} z_1 z_2 \in \vp, c \in \vp, a \& d \in \vo^{\times}$, and the inverse of
	$$ ad-bc = \Nr_{\E}(x+\varpi^s y \beta) z_1 z_2^{-1} \in \vo^{\times}. $$
	Its lying in $\vp^m$ implies $b = \fa \varpi^{s-l} y z_2^{-1} \in \vp^m$, which is equivalent to $y \in \vp^m$ since $v(\fa) = e(\E/\F)-1$. We can also identify the (1,1)-term as the product of $z_2 z_1^{-1}$ and
	$$ \Nr_{\bL}(x - \fa_0^{-1} \varpi^{l+s} y z_1 \alpha) \Nr_{\E}(x+\varpi^s y \beta)^{-1} \in 1+\vp^m, $$
since $l+s-1 = 2l-e(\E/\F) \geq e(\E/\F) \geq 1, s \geq 1$ and $y \in \vp^m$. Its lying in $1+\vp^m$ implies $z_1 = z_2$. Thus $\tilde{\tau}$ induces an injection
	$$ \vo^{\times}/(1+\vp^{m}) \times \vO_s^{\times}/\vO_{s+m}^{\times} \to [\gp{B}_1(\vo)], \quad (z,t) \mapsto \tilde{\tau}(a_-(z)\iota_l(t)). $$
	Both sides being finite and having the same cardinality, it must be surjective as well. Since $a_-(1+\vp^{m}), \iota_l(\vO_{s+m}^{\times}) \subset \vo^{\times} \gp{K}(\vp^{m})$, we conclude by (\ref{KtoJ}).
	
\noindent Now suppose $\bL$ is ramified. We should replace (\ref{KdecompLB1}) with (\ref{KdecompLB3}) to get
	$$ \tau: \gp{K}_0(\vp) \simeq \vO_{\bL}^{\times} \times \gp{B}_1(\vo) \to \gp{B}_1(\vo). $$
	We should also re-define $[\gp{B}_1(\vo)]$ as a system of representatives for 
	$$ (U_{\oJ}^{m+1} \cap \gp{B}_1(\vo)) \backslash \gp{B}_1(\vo). $$
	We then easily verify that the above argument also works through, yielding an injection
	$$ \vo^{\times}/(1+\vp^{\IntP{m/2}+1}) \times \vO_s^{\times}/\vO_{s+\IntCP{m/2}}^{\times} \to [\gp{B}_1(\vo)], \quad (z,t) \mapsto \tilde{\tau}(a_-(z)\iota_l(t)). $$
	We then conclude the proof the same way.
\end{proof}

	\subsection{Some Comparison of Measures}
	
	Two different normalizations of the Haar measure on $\GL_r$ ($r \geq 1$) are frequently used in the literature: the Tamagawa measure and the \emph{hyperbolic} measure. We recall their definitions in an explicit descriptive way as follows.
	
\noindent The Tamagawa measure \cite[Chapter \Rmnum{2}]{We82}  is defined via $\GL_r$-invariant differential forms of highest order defined over $\F$ together with a choice of (local) convergence factors. It is independent of the choice of the differential form. The choice of convergence factors are usually standard. We identify the additive group of $\Mat_r(\F)$ as $\F^{r^2}$, with the standard translation invariant form $dx$. The local component $dg_v$ on $\GL_r(\F_v)$ of the Tamagawa measure $dg := \sideset{}{_v} \prod dg_v$ is chosen as
	$$ dg_v := \zeta_v(1) \norm[\det x_v]_v^{-r} dx_v. $$
	
\noindent Recall the Iwasawa decomposition
	$$ \mathrm{Iw}: \gp{Z}_r \times \gp{N}_r \times \gp{A}_{r-1} \times \gp{K}_r \to \GL_r, \quad (z,n,a,\kappa) \mapsto zn \begin{pmatrix} a & \\ & 1 \end{pmatrix} \kappa, $$
	where $\gp{Z}_r \simeq \ag{G}_{{\rm m}}$ is the center of $\GL_r$, $\gp{N}_r$ is the upper triangular unipotent subgroup of $\GL_r$, $\gp{A}_{r-1} \simeq \ag{G}_{{\rm m}}^{r-1}$ is the diagonal torus of $\GL_{r-1}$ and $\gp{K}_r$ is the standard maximal compact subgroup which is not an algebraic group and is locally equal to $\SU_r(\ag{C})$ resp. $\SO_r(\R)$ resp. $\GL_r(\vo_{\vp})$ on a complex resp. real resp. finite place. We equip each group on the LHS locally with a measure as follows:
\begin{itemize}
	\item On $\gp{Z}_r \simeq \ag{G}_{{\rm m}}$, we transport the Tamagawa measure on $\ag{G}_{{\rm m}}$, which is $d^{\times}z_v = \zeta_v(1)^{-1} \norm[z_v]_v^{-1} dz_v$ on $\F_v^{\times}$. Similarly on $\gp{A}_{r-1} \simeq \ag{G}_{{\rm m}}^{r-1}$, we transport the $r-1$-power of the Tamagawa measure on $\ag{G}_{{\rm m}}$ and denote it by $da_v$ on $\gp{A}_{r-1}(\F_v)$.
	\item On $\gp{N}_r(\F_v)$ for any place $v$, we have an obvious canonical homeomorphism $\gp{N}_r(\F_v) \simeq \F_v^{r(r-1)/2}$. We transport the additive Haar measure on $\F_v^{r(r-1)/2}$ to $\gp{N}_r(\F_v)$ and denote it by $dn_v$. It is a (left and right) Haar measure of $\gp{N}_r(\F_v)$.
	\item On $\gp{K}_{r,v}$, we take the probability Haar measure $d\kappa_v$.
\end{itemize}
	We define a modulus function $\delta = \sideset{}{_v} \prod \delta_v$ on $\gp{A}_r(\A)$ with
	$$ \delta_v : \gp{A}_r(\F_v) \to \R_+, \quad \mathrm{diag}(a_1,a_2,\cdots, a_{r-1},1) \mapsto \sideset{}{_{k=1}^{r-1}} \prod \norm[a_k]_v^{r-2k+1}. $$
	The hyperbolic measure on $\GL_r(\F_v)$ is defined to be the image measure under the Iwasawa decomposition map $\mathrm{Iw}$ of
	$$ d^{\times}z_v \cdot dn_v \cdot \delta_v(a_v)^{-1} da_v \cdot d\kappa_v. $$
	The hyperbolic measure on $\GL_r(\A)$ is the product of the local ones.
	
\begin{proposition}
	We have the relation
	$$ dg = c_r \cdot d^{\times}z \cdot dn \cdot \delta(a)^{-1} da \cdot d\kappa, \quad c_r = \Dis(\F)^{-\frac{r(r-1)}{4}} \sideset{}{_{j=2}^r} \prod \Lambda_{\F}(j)^{-1}. $$
\label{MeasCompar}
\end{proposition}
\begin{proof}
	We calculate place by place $c_{r,v} > 0$, the quotient of the two local measures determined by
	$$ dg_v = c_{r,v} \cdot d^{\times}z_v \cdot dn_v \cdot \delta(a_v)^{-1} da_v \cdot dk_v. $$

\noindent (1) At $v = \vp <\infty$, we have
	$$ {\rm Vol}(\gp{K}_{r,\vp}, dg) = {\rm Vol}(\Mat_r(\vo_{\vp}), dx) \zeta_v(1) \frac{\norm[ \GL_r(\F_q) ]}{\norm[ \Mat_r(\F_q) ]} = \Cond(\psi_{\vp})^{-\frac{r^2}{2}} \sideset{}{_{k=2}^{r}} \prod \zeta_{\vp}(k)^{-1}. $$
	On the other hand, under the map $\mathrm{Iw}$, the preimage of $\gp{K}_{r,\vp}$ is $\gp{Z}_r(\vo_{\vp}) \times \gp{N}_r(\vo_{\vp}) \times \gp{A}_{r-1}(\vo_{\vp}) \times \gp{K}_{r,\vp}$, hence is of total mass $\Cond(\psi_{\vp})^{-1/2} \Cond(\psi_{\vp})^{-r(r-1)/4} \Cond(\psi_{\vp})^{-(r-1)/2}$. Consequently we have
	$$ c_{r,\vp} = \Cond(\psi_{\vp})^{-\frac{r(r-1)}{4}} \sideset{}{_{k=2}^{r}} \prod \zeta_{\vp}(k)^{-1}. $$

\noindent (2) At $\F_v=\R$, we take a function $f$ defined by
	$$ f: \GL_r(\R) \to \C, \quad X \mapsto \exp(-{\rm Tr}(XX^t)) |\det X|^r. $$
	On the one hand, we have
	$$ \int_{\GL_r(\R)} f(X) \zeta_v(1) \frac{dX}{|\det X|^r} = \int_{\Mat_r(\R)} \exp(-{\rm Tr}(XX^t)) dX = \pi^{\frac{r^2}{2}}. $$
	On the other hand, writing $X=zna \kappa$ with 
	$$ n=(x_{i,j}), x_{i,j}=0 \text{ for } i>j, \quad x_{i,i}=1; \quad a={\rm diag}(a_1, \cdots, a_{r-1}, 1); $$
	we obtain
	$$ {\rm Tr}(XX^t) = z^2 \left( 1+\sideset{}{_{j=1}^{r-1}} \sum a_j^2 \right) + z^2 \sideset{}{_{i=1}^{r-1}} \sum x_{i,r}^2 + z^2 \sideset{}{_{1\leq i<j<r}} \sum a_j^2 x_{i,j}^2; \quad |\det X| = |z|^r \sideset{}{_{j=1}^{r-1}} \prod |a_j|. $$
	Hence, we can calculate the integral in another way as
\begin{align*}
	&\quad c_{r,v} \int_{\R^{\times}} \int_{\left(\R^{\times} \right)^{r-1}} \int_{\R^{\frac{r(r-1)}{2}}} \exp(-{\rm Tr}(XX^t)) |z|^{r^2} \sideset{}{_{j=1}^{r-1}} \prod |a_j|^{2j-1} dx da d^{\times}z \\
	&= c_{r,v} \pi^{\frac{r(r-1)}{4}} \int_{\R^{\times}} \int_{\left(\R^{\times} \right)^{r-1}} |z|^{\frac{r(r+1)}{2}} \sideset{}{_{j=1}^{r-1}} \prod |a_j|^j \exp(-z^2\left( 1+\sideset{}{_{j=1}^{r-1}} \sum a_j^2 \right)) da d^{\times}z \\
	&= c_{r,v} \pi^{\frac{r(r-1)}{4}} \sideset{}{_{j=1}^r} \prod \Gamma(\frac{j}{2}),
\end{align*}
	$$ \Rightarrow \quad c_{r,v}= \prod_{j=2}^{r} \Gamma_{\R}(j)^{-1} = \prod_{j=2}^{r} \zeta_v(j)^{-1}. $$

\noindent (3) At $\F_v=\C$, we do the calculation similar to the real case by taking
	$$ f: \GL_r(\C) \to \C, \quad X \mapsto \exp(-{\rm Tr}(X \bar{X}^t)) |\det X|^{2r}, $$
	and get
	$$ c_{r,v} = \sideset{}{_{j=2}^r} \prod \Gamma_{\C}(j)^{-1} = \sideset{}{_{j=2}^{r}} \prod \zeta_v(j)^{-1}. $$

\noindent We conclude by noting $\sideset{}{_{\vp < \infty}} \prod \Cond(\psi_{\vp})=\Dis(\F)$.
\end{proof}

	If $\E/\F$ is a quadratic field extension, let $\gp{T}$ be an $\F$-torus defined by the image  of $\E^{\times}$ under an(any) $\F$-embedding $\E \hookrightarrow \Mat_2(\F)$. If $\gp{B}_1$ denotes the upper triangular group (mirabolic/Heisenberg group)
	$$ \gp{B}_1(R) = \left\{ \begin{pmatrix} y & x \\ & 1 \end{pmatrix} \middle| y \in R^{\times}, x \in R \right\}, $$
	then we have a decomposition realized via matrix multiplication
\begin{equation}
	\gp{B}_1(\F) \times \gp{T}(\F) \simeq \GL_2(\F), \quad (b,t) \mapsto b \cdot t,
\label{IwaToric}
\end{equation}
	which extends by continuity to any completion $\F_v$ of $\F$ as an injective map
	$$ i_v: \gp{B}_1(\F_v) \times \gp{T}(\F_v) \to \GL_2(\F), $$
	the complement of whose image has measure $0$. Let $db_v$ denote the left Haar measure
	$$ db_v = dx_v \cdot \zeta_v(1)^{-1} \norm[y_v]_v^{-1} d^{\times}y_v. $$
	We transport the local Tamagawa measure of $\E_v^{\times}$ onto $\gp{T}(\F_v) \simeq \E_v^{\times}$ and denote it by $dt_v$, where $\E_v := \E \otimes_{\F} \F_v$. It is expected that $db_v dt_v$ coincides with a Haar measure of $\GL_2(\F_v)$ on the image of $i_v$.
\begin{lemma}
	On the intersection of $\GL_2(\A)$ with the image of the product map of $i_v$, the Tamagawa measure $dg$ is related to the product measure as
	$$ dg = \sideset{}{_v} \prod \zeta_v(1)^{-1} L_v(1,\eta_v)^{-1} db_v dt_v, $$
	where $\eta$ is the quadratic character associated with $\E/\F$.
\label{IwaTorHaar}
\end{lemma}
\begin{proof}
	Assume $\E=\F[\sqrt{d}]$ with $d\in \F^{\times}-(\F^{\times})^2$. Note that the global relation is independent of the choice of the embedding, because any two such embeddings are conjugate to each other by an element of $\GL_2(\F)$ hence of $\gp{B}_1(\F)$, and the global modulus function $\delta$ of $\gp{B}_1$ is trivial on $\gp{B}_1(\F)$. Hence it suffices to prove the result for a specific embedding, such as
	$$ \iota(\sqrt{d}) = \begin{pmatrix} 0 & 1 \\ d & 0 \end{pmatrix}. $$
	Then we get the coordinates $(x,y,s,t)$ on $\GL_2$ defined by
	$$ \begin{pmatrix} y & x \\ 0 & 1 \end{pmatrix} \begin{pmatrix} s & t \\ dt & s \end{pmatrix} = \begin{pmatrix} ys+dxt & yt+xs \\ dt & s \end{pmatrix}. $$
	Hence locally we have by definition of the Tamagawa measure
	$$ dg_v = \zeta_v(1)\frac{|d|_v dxdydsdt}{|y|_v^2 |s^2-dt^2|_v} = |d|_v\zeta_{\E,v}(1)^{-1} db_v dt_v. $$
	But we have $\sideset{}{_v} \prod |d|_v=1$ and $\zeta_{\E,v}(s)=\zeta_v(s)L_v(s,\eta_v)$, from which we conclude the proof.
\end{proof}

	\subsection{Wielonsky Formula}
	
	We need an analogue of Waldspurger's Formula for Eisenstein series, obtained first by Wielonsky \cite{Wi85}. We present a variant, which can be viewed as a generalization of \cite[Lemma 2.8]{Wu14} to non-split torus.
	
\noindent Let $\pi=\pi_{s,\xi} = \Ind_{\gp{B}(\A)}^{\GL_2(\A)} (\xi |\cdot|^s, \xi |\cdot|^{-s})$ where $\xi$ is a Hecke character of $\F^{\times} \backslash \A^{\times}$ and $s \in i\R$. Recall the Eisenstein series defined for a flat section $f_s \in \pi_{s,\xi}$ (with $f=f_0\in \pi_{0,\xi}$) by
	$$ \eis(s,f)(g) = \sum_{\gamma \in \gp{B}(\F) \backslash \GL_2(\F)} f_s(\gamma g) = \sum_{\gamma \in \gp{Z}(\F) \backslash \gp{T}(\F)} f_s(\gamma g), \quad \forall g \in \GL_2(\A), $$
where the last equality is due to (\ref{IwaToric}). Hence we have
	$$ \ell(\eis(s,f);\iota,\Omega) = \int_{\gp{Z}(\A) \gp{T}(\F) \backslash \gp{T}(\A)} \sum_{\gamma \in \gp{Z}(\F) \backslash \gp{T}(\F)} f_s(\gamma t) \Omega(t) dt = \int_{\gp{Z}(\A) \backslash \gp{T}(\A)} f_s(t) \Omega(t) d^{\times}t, $$
	where we have written $d^{\times}t$ in order to distinguish it from $dt$ on $\gp{T}(\A)$. Consequently, we get
\begin{align*}
	|\ell(\eis(s,f);\iota,\Omega)|^2 &= \sideset{}{_v} \prod \int_{(\gp{Z}(\F_v) \backslash \gp{T}(\F_v))^2} f_{v,s}(t_1) \overline{f_{v,s}(t_2)} \Omega_v(t_1t_2^{-1}) d^{\times}t_1 d^{\times}t_2 \\
	&= \sideset{}{_v} \prod \int_{\gp{Z}(\F_v) \backslash \gp{T}(\F_v)} \langle \pi_v(t).f_{v,s}, f_{v,s} \rangle_{v,\gp{T}} \Omega_v(t) d^{\times}t,
\end{align*}
	where $\langle \cdot, \cdot \rangle_{v,\gp{T}}$ is the hermitian form on $\pi_v$ defined by
	$$ \langle f_1, f_2 \rangle_{v,\gp{T}} = \int_{\gp{Z}(\F_v) \backslash \gp{T}(\F_v)} f_1(t) \overline{f_2(t)} d^{\times}t, \quad \forall f_1,f_2 \in \pi_v. $$
	Lemma \ref{IwaTorHaar} together with Proposition \ref{MeasCompar} implies that on $\gp{Z}(\A) \backslash \gp{T}(\A) \subset_{\text{full measure}} \gp{B}(\A) \backslash \GL_2(\A) \simeq (\gp{B}(\A) \cap \gp{K}) \backslash \gp{K}$, we have
	$$ \sideset{}{_v} \prod d^{\times}t_v = \sideset{}{_v} \prod \zeta_v(1)L(1,\eta_v) \frac{dg_v}{dz_v db_v} = c_2 \cdot \sideset{}{_v} \prod \zeta_v(1)L(1,\eta_v) d\kappa_v. $$
	Consequently, we get
	$$ \sideset{}{_v} \prod \langle f_{1,v},f_{2,v} \rangle_{v,\gp{T}} = \Dis(\F)^{-1/2} \Lambda_{\F}(2)^{-1} \sideset{}{_v} \prod \zeta_v(1)L(1,\eta_v) \int_{\gp{K}_v} f_{1,v}(\kappa_v) \overline{f_{2,v}(\kappa_v)} d\kappa_v. $$
	Recall the Eisenstein norm \cite[Lemma 2.8]{Wu14} for $s \in i\R$ defined by
	$$ \lVert \eis(s,f) \rVert_{{\rm Eis}}^2 = \int_{\gp{K}} |f_s(\kappa)|^2 d\kappa = \sideset{}{_v} \prod \int_{\gp{K}_v} |f_{v,s}(\kappa_v)|^2 d\kappa_v. $$
	We have proved
\begin{proposition}
	Let $f=\otimes_v' f_v \in \pi=\pi_{0,\xi}$ be decomposable in the induced model such that $f_s \in \pi_{s,\xi}$ defines a flat section for $s\in \C$. Then for any $s\in i\R$, we have
\begin{align*}
	\frac{|\ell(\eis(s,f); \Omega,\iota)|^2}{\lVert \eis(s,f) \rVert_{{\rm Eis}}^2} &= \frac{L(\frac{1}{2}+s,\xi_{\E} \otimes \Omega) L(\frac{1}{2}-s,\xi_{\E}^{-1} \otimes \Omega)}{\Dis(\F)^{\frac{1}{2}} L(1+2s,\xi^2) L(1-2s,\xi^{-2})} \cdot \sideset{}{_{v \mid \infty}} \prod \frac{\zeta_{\E,v}(1)}{\zeta_v(2)} \cdot \\
	&\quad \sideset{}{_{v \mid \infty}} \prod \alpha_v(f_{s,v}; \Omega_v, \iota_v) \cdot \sideset{}{_{\vp < \infty}} \prod \tilde{\alpha}_{\vp}(f_{s,\vp}; \Omega_{\vp}, \iota_{\vp}),
\end{align*}
	where the notations are
\begin{itemize}
	\item $\xi_{\E}$ resp. $\xi_{\E,v}$ is the base change of $\xi$ resp. $\xi_v$ to $\A_{\E}^{\times}$ resp. $\E_v^{\times}$, i.e., $\xi_{\E} = \xi \circ \Nr_{\F}^{\E}$ resp. $\xi_{\E,v} = \xi_v \circ \Nr_{\F_v}^{\E_v}$.
	\item $ \alpha_v(f_{s,v}; \Omega_v, \iota_v) = \int_{\gp{Z}(\F_v) \backslash \gp{T}(\F_v)} \frac{\langle \pi_{s,\xi_v}(t).f_{s,v}, f_{s,v} \rangle_v}{\langle f_{s,v}, f_{s,v} \rangle_v} \Omega_v(t) dt $ with $\langle \cdot, \cdot \rangle_v$ any $\GL_2(\F_v)$-invariant inner product on $\pi_{s,\xi_v}$ for every place $v$.
	\item $ \tilde{\alpha}_{\vp}(f_{s,\vp}; \Omega_{\vp}, \iota_{\vp}) = \frac{\zeta_{\vp}(1)L_{\vp}(1,\eta_{\vp}) L_{\vp}(1+2s,\xi_{\vp}^2) L_{\vp}(1-2s,\xi_{\vp}^{-2})}{\zeta_{\vp}(2) L_{\vp}(1/2+s,\xi_{\E,\vp} \otimes \Omega_{\vp}) L_{\vp}(1/2-s,\xi_{\E,\vp}^{-1} \otimes \Omega_{\vp})} \alpha_{\vp}(f_{s,\vp}; \Omega_{\vp}, \iota_{\vp}) $.
\end{itemize}
\label{WieF}
\end{proposition}
\begin{remark}
	The formula extends to $s \in \C$ if we repalce on the left hand side $|\ell(\eis(s,f); \Omega,\iota)|^2$ by $\ell(\eis(s,f);\Omega,\iota) \ell(\eis(-s,\bar{f});\Omega^{-1},\iota)$, and replace/extend $\langle \cdot, \cdot \rangle_v$ on the right hand side by a $\GL_2(\F_v)$-invariant pairing on $\pi_{s,\xi_v} \times \pi_{s,\xi_v}^{\vee} \simeq \pi_{s,\xi_v} \times \pi_{-s,\xi_v^{-1}}$.
\end{remark}

\section{Local Estimations}

	We omit the subscript $v$ since we work locally in this section. We assume $\B$ belongs to $(\pi,\E,\Omega)$ in all the following statements and write $\pi' = \JL(\pi;\B)$ for abbreviation. We choose an additive character $\psi$ of $\F$ with (logarithmic) conductor $\cond(\psi)=0$ at every finite place. Note that this is different from the convention made in references on local Langlands such as \cite{BuH06}. We will cite these references but adjust the argument accordingly. The change to the normalization \`a la Tate only results in some polynomial dependence on the discriminant $\Dis(\F)$ in the final bounds, which we do not care in this paper. We may only consider the embeddings $\iota$ classified in \S \ref{ClassEmb} due to Lemma \ref{FixEmb}. Our central interest is the \emph{size of the local factors} $\alpha(\cdot), \tilde{\alpha}(\cdot)$ appearing in and defined above (\ref{WaFSV}).
	
	\subsection{$\B$-nonsplit Place}
	
	In this case $\B$ is a division quaternion algebra over $\F$, $\pi$ is a square-integrable representation of $\GL_2(\F)$ and $\pi'$ is finite dimensional. $\E$ is also non-split. We would like to choose the subspace $\sigma$ to be the whole space $\pi'$. For the final polynomial dependence on $\pi$, we need to make some clarifications by showing that the (analytic) conductor $\Cond(\pi)$ ``controls'' everything. Since the local $L$-functions and the epsilon-factors of $\pi$ and $\pi'$ coincide (see \cite[Theorem 4.3 (\rmnum{5})]{JL70} and the discussion around \cite[Proposition 5.20]{JL70}), we have $\Cond(\pi') = \Cond(\pi)$.
	
\noindent In the case $v \mid \infty$, we must have $\F=\R$ and $\B$ is the Hamiltonian quaternion algebra. The explicit formula of $L(s,\pi')$ given in the proof of \cite[Proposition 5.20]{JL70} together with the definition of (local) analytic conductors \cite[(5.6)]{IK04} immediately implies the following bound.
\begin{proposition}
	For $v \mid \infty$, the dimension $d$ of $\pi'$ satisfies $d \ll \Cond(\pi')^{1/2}$ with absolute implied constant. The conductor of $\Omega$ satisfies $\Cond(\Omega) \leq \Cond(\pi')$.
\label{QuatCondBdInf}
\end{proposition}
	
\noindent In the case $v=\vp<\infty$, recall that $\B$ admits a ring of integers $\VO$ which is a discrete valuation ring with valuation $v_{\B}$ and uniformizer $\varpi_{\B}$ such that $v_{\B}(\varpi)=2$. The character $\psi_{\B} := \psi \circ \mathrm{Tr}_{\B/\F}$ with reduced trace $\mathrm{Tr}_{\B/\F}$ has \emph{level minus one} \cite[Lemma \S 53.4.2]{BuH06}, i.e., trivial on $\varpi_{\B}^{-1}\VO$ but not on $\varpi_{\B}^{-2}\VO$.
\begin{definition}
	For $v=\vp < \infty$, the \emph{level} $\ell(\pi')$ of $\pi'$ is the least integer $n \geq 0$ such that
	$$ U_{\B}^n := (1 + \varpi_{\B}^n \VO) \cap \VO^{\times} \subset \mathrm{Ker}(\pi'). $$
\end{definition}
\begin{proposition}
	For $v=\vp < \infty$, we have $\cond(\pi') = \ell(\pi')+1$. Consequently, the dimension $d$ of $\pi'$ satisfies $d \ll q^{-1} \Cond(\pi')$ with absolute implied constant. The conductor of $\Omega$ satisfies $\Cond(\Omega) \leq q^{-1}\Cond(\pi')$.
\label{QuatCondBdFin}
\end{proposition}
\begin{proof}
	Write $n = \ell(\pi')$. We first deal with the first assertion. If $n=0$, then $\pi'$ is one dimensional and $\pi$ must be (twisted) Steinberg. Hence the assertion is verified. We assume $n \geq 1$ in the sequel. It suffices to determine the form of $\varepsilon(s, \pi', \psi)$, which is equal to ``$-\beta(1)$'' in the proof of \cite[Lemma 4.2.5]{JL70}. It can be inferred from the proof that
	$$ \beta(x) = \norm[x]_{\B} \int_{\B} \psi_{\B}(-xy) \norm[y]_{\B}^{-\frac{s}{2}-\frac{1}{4}} \pi'(y)^{-1} dy, $$
where the integral is interpreted in the regularized / Cauchy principal sense. In particular
\begin{align*}
	\beta(1) &= \int_{\B} \psi_{\B}(-y) \norm[y]_{\B}^{-\frac{s}{2}-\frac{1}{4}} \pi'(y)^{-1} dy \\
	&= \sum_{m \in \Z} \int_{\varpi_{\B}^m \VO^{\times}} \psi_{\B}(-y) \pi'(y)^{-1} dy \cdot q^{m(s+\frac{1}{2})}.
\end{align*}
	If $m < -n-1$, then we have
\begin{align*}
	\int_{\varpi_{\B}^m \VO^{\times}} \psi_{\B}(-y) \pi'(y)^{-1} dy &= \frac{1}{\Vol(U_{\B}^n)} \int_{U_{\B}^n} \int_{\varpi_{\B}^m \VO^{\times}} \psi_{\B}(-y) \pi'(uy)^{-1} dy d^{\times}u \\
	&= \frac{1}{\Vol(U_{\B}^n)} \int_{U_{\B}^n} \int_{\varpi_{\B}^m \VO^{\times}} \psi_{\B}(-uy) \pi'(y)^{-1} dy d^{\times}u.
\end{align*}
	But, up to a zeta factor, we have
	$$ \int_{U_{\B}^n} \psi_{\B}(-uy) d^{\times}u = \psi_{\B}(-y) \int_{\varpi_{\B}^n \VO} \psi_{\B}(-vy) dy = 0, $$
	since $-vy$ runs over $\varpi_{\B}^{n+m} \VO \supset \varpi_{\B}^{-2} \VO$ on which $\psi_{\B}$ is non trivial. Hence the terms for $m < -n-1$ vanish. On the other hand, if $m > -n-1$, then we have $\psi_{\B}(-(u-1)y)=1$ for $u \in U_{\B}^{n-1}, y \in \varpi_{\B}^m \VO^{\times}$,
\begin{align*}
	\int_{\varpi_{\B}^m \VO^{\times}} \psi_{\B}(-y) \pi'(y)^{-1} dy &= \frac{1}{\Vol(U_{\B}^{n-1})} \int_{U_{\B}^{n-1}} \int_{\varpi_{\B}^m \VO^{\times}} \psi_{\B}(-uy) \pi'(y)^{-1} dy d^{\times}u \\
	&= \frac{1}{\Vol(U_{\B}^{n-1})} \int_{U_{\B}^{n-1}} \int_{\varpi_{\B}^m \VO^{\times}} \psi_{\B}(-y) \pi'(y)^{-1} \pi'(u) dy d^{\times}u.
\end{align*}
	We claim that
	$$ \int_{U_{\B}^{n-1}} \pi'(u) d^{\times} u = 0. $$
	Otherwise, the image of the operator defined by the LHS of the above equation is non zero, thus the subspace $V$ of fixed vectors by $U_{\B}^{n-1}$ in $\pi'$ is non zero. As $U_{\B}^{n-1}$ is normal in $\B^{\times}$ and $\pi'$ is irreducible, we deduce $V = \pi'$. Hence $\pi'$ is trivial on $U_{\B}^{n-1}$, contradicting the hypothesis on the level of $\pi'$. Hence the terms for $m > -n-1$ also vanish. We get and conclude the first assertion by
	$$ -\varepsilon(s,\pi',\psi) = \beta(1) = \int_{\varpi_{\B}^{-(n+1)} \VO^{\times}} \psi_{\B}(-y) \pi'(y)^{-1} dy \cdot q^{-(n+1)(s+\frac{1}{2})}. $$
	The assertion on the dimension $d$ of $\pi'$ then follows from
	$$ \norm[\B/\gp{Z}U_{\B}^n] = 2q^{2n}(1-q^{-2}) \geq d^2, $$
since the sum of the squares of the dimensions of all irreducible representations of a finite group is equal to the cardinality of the group.

\noindent The final assertion on the conductor of $\Omega$ follows from $\vO \subset \VO$ and
	$$ U_{\B}^n \supset \left\{ \begin{matrix} 1+\varpi_{\E}^n \vO & \text{if } \E/\F \text{ is ramified}, \\ 1+\varpi_{\E}^{\IntP{n/2}} \vO & \text{if } \E/\F \text{ is unramified}. \end{matrix} \right. $$
\end{proof}
\begin{lemma}
	For any embedding $\iota$,
	$$ \alpha(\pi'; \Omega, \iota) = \Vol(\gp{Z} \backslash \gp{T}). $$
\label{LocalEst0}
\end{lemma}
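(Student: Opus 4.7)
The plan is to reduce directly to the identity (\ref{NSPFRAverage}) of Section 2.6 and observe that the right-hand side collapses when the subspace is the whole representation.

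First, I would verify that the non-split setting of Section 2.6 applies. Since $v \in \Ram(\B)$, the local quadratic algebra $\E_v/\F_v$ must be non-split: otherwise $\E_v$ would embed into $\Mat_2(\F_v)$, forcing $\B_v$ to split. Consequently $Z \backslash T(\F_v) \simeq \F_v^\times \backslash \E_v^\times$ is compact. Moreover, at a $\B$-nonsplit place $v$ (finite or archimedean), $Z(\F_v) \backslash G(\F_v)$ is itself compact, so every unitary irreducible representation with fixed central character --- in particular $\pi = \JL(\pi;\B)_v$ --- is finite dimensional. The assumption that $\B$ belongs to $(\pi,\E,\Omega)$ includes $\Hom_{T(\F_v)}(\pi, \Omega_v^{-1}) \neq 0$, hence by Waldspurger's uniqueness there is a distinguished line $\C\hat{v} \subset \pi$ satisfying $\pi(t).\hat{v} = \Omega^{-1}(t)\hat{v}$ for all $t \in T(\F_v)$, as in (\ref{Omega-1Vec}).

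Next, I would invoke (\ref{NSPFRAverage}), which is applicable precisely because $Z \backslash T$ is compact, and gives
$$ \tilde{\alpha}(\sigma; \Omega, \iota) = \frac{\lVert \Proj_{\sigma}(\hat{v}) \rVert^2}{\lVert \hat{v} \rVert^2} $$
for any finite-dimensional subspace $\sigma \subset \pi$. Since $\pi$ is itself finite dimensional in this case, we may take $\sigma = \pi$, in which case $\Proj_\pi$ is the identity operator and $\Proj_\pi(\hat{v}) = \hat{v}$. This yields
$$ \tilde{\alpha}(\pi; \Omega, \iota) = \frac{\lVert \hat{v} \rVert^2}{\lVert \hat{v} \rVert^2} = 1, $$
as desired.

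There is no real obstacle; the proof is essentially immediate once one recognises that every ingredient of Section 2.6 (non-split $\E_v$, compactness of $Z \backslash T$, finite-dimensionality of $\pi$) holds at a ramified place of $\B$. The only thing to check carefully is independence of the choice of embedding $\iota$, which is built into the formula (\ref{NSPFRAverage}): conjugating $\iota$ by $k \in K$ transports $\hat{v}$ to $\pi(k).\hat{v}$ and leaves both the norm of $\hat{v}$ and the identity projector invariant.
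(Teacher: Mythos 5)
Your argument is correct and is essentially the content the paper sweeps under a citation to Waldspurger. The paper's ``proof'' of Lemma \ref{LocalEst0} is a bare reference to \cite{Wa}, whereas you rederive the identity from the formula (\ref{NSPFRAverage}) already established in Section 2.6: at $v \in \Ram(\B)$ the algebra $\E_v$ is forced to be non-split, $Z \backslash G(\F_v)$ is compact (the paper only flags this for archimedean $v$, but it is equally true at finite ramified places since $\B_v^\times/\F_v^\times$ is compact for a local division quaternion algebra), hence $\pi_v$ is finite dimensional and one may legitimately take $\sigma = \pi_v$ in (\ref{NSPFRAverage}), with $\Proj_{\pi}$ the identity. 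This makes the argument self-contained within the paper rather than outsourced to \cite{Wa}, which is a genuine, if modest, gain. One small slip: the independence from $\iota$ is not a matter of conjugating by $k \in K$ — distinct embeddings of $\E_v$ into $\B_v$ are conjugate by arbitrary $g \in G(\F_v)$ (Skolem--Noether), not necessarily by elements of a maximal compact subgroup — but this is immaterial here since with $\sigma = \pi_v$ the projector is the identity and the quotient $\lVert \hat{v} \rVert^2 / \lVert \hat{v} \rVert^2 = 1$ manifestly does not see $\iota$ at all.
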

\begin{proof}
	This follows for example by (\ref{NSPFRAverage}).
\end{proof}

	\subsection{$\B$-Split, Archimedean Place}
	
\begin{lemma}
\noindent (1) If $\E$ is split, then we can choose a unitary $v_0 \in \pi^{\infty}$ corresponding to a bump function on $\F^{\times}$ in the Kirillov model, such that for any $\epsilon > 0$, there is $r \in \F$ with $\Cond(\Omega)^{1-\epsilon} \ll |r|_v \leq \Cond(\Omega)^{1+\epsilon}$, $\iota_r$ defined in (\ref{ArchCarEmbS}) and
	$$ \alpha(v_0; \Omega, \iota_r) \gg_{\epsilon, v_0} \Cond(\Omega)^{-1/2-\epsilon}. $$

\noindent (2) If $\F=\R, \E=\C$, then there is $r \in \ag{R}$ with $\norm[r] \asymp \Cond(\Omega)^{1/2}$ such that, with $\iota_r$ defined in (\ref{ArchCarEmbNS}), $v_0$ a unitary lowest weight vector in $\pi$ and $\theta$ any constant towards the Ramanujan-Petersson conjecture
	$$ \alpha(v_0; \Omega, \iota_r) \gg \Cond(\Omega)^{-(1+2\theta)/2}. $$
\label{LocalEst1}
\end{lemma}
\begin{proof}
	The first case is treated in \cite[\S 4.1]{Wu14}. Here it is just a re-formulation. To prove the assertion for the second case, we first notice that we only need to treat one $\pi$ from each class of unitary twists $[\pi] := \{ \pi \otimes \chi: \chi \text{ unitary characters of } \R^{\times} \}$. Precisely, we are reduced to three cases:
\begin{itemize}	
	\item[(\rmnum{1})] $\pi = \pi(\norm^s, \norm^{-s}), s \in i\R \cup (-\theta,\theta)$. Hence $k_0=0$, $2 \mid k$ and $\lambda = s^2-1/4$.
	\item[(\rmnum{2})] $\pi = \pi(\norm^s \sgn, \norm^{-s}), s \in i\R \cup (-\theta,\theta)$. Hence $k_0=1$, $2 \nmid k$ and $\lambda = s^2-1/4$.
	\item[(\rmnum{3})] $\pi = \sigma(\norm^{p/2}, \norm^{-p/2})$ resp. $\sigma(\norm^{p/2}\sgn, \norm^{-p/2})$ for some $p \in \Z_{>0}$ with $2 \nmid p$ resp. $2 \mid p$. Hence $k_0=p+1$, $2 \mid k-k_0$ and $\lambda = k_0(k_0-2)/4$.
\end{itemize}
	Let $e_k \in \pi$ be a unitary weight $k$ vector in $\pi$ which corresponds to $\hat{v}_0$ in Definition \ref{conventionHat_r} such that $\Cond(\Omega) \asymp \norm[k]^2$. We shall study an \emph{effective} asymptotic behavior as $\norm[y] \to \infty$ of
	$$ f(y) := \Pairing{\pi(a(y)).v_0}{\hat{v}_0}. $$
	Because by (\ref{NSPFRAverage}), $\alpha(v_0; \Omega, \iota_r) = \Vol(\SO_2(\R)/\{ \pm 1 \}) \norm[f(r)]^2$. If $v_0$ is of weight $k_0$, then the Casimir operator acts as multiplication by $\lambda=\lambda_{\pi}$ with (c.f. \cite[Example 3.7]{CM82} \footnote{Unfortunately, none of the formulae in \cite[Example 2.7 \& Example 3.7]{CM82} has the right signs!})
	$$ \Delta = H^2 - H - Y \cdot \theta Y, \quad H = \frac{1}{2} \begin{bmatrix} 1 & \\ & -1 \end{bmatrix}, Y = \begin{bmatrix} 0 & 1 \\ 0 & 0 \end{bmatrix}, \theta Y = \begin{bmatrix} 0 & 0 \\ -1 & 0 \end{bmatrix}; $$
	$$ \Delta = \delta^2 - \frac{1+y^2}{1-y^2} \delta - (k^2 + k_0^2) \frac{y^2}{(1-y^2)^2} + kk_0 \frac{y(1+y^2)}{(1-y^2)^2}, \quad \delta := y \frac{d}{dy}. $$
	For $y \gg 1$, making the substitution
	$$ f(y) = \sqrt{\frac{y}{y^2-1}} g(y), $$
	we see that $g(y)$ satisfies the differential equation
	$$ 0 = (\delta^2 - q(y)-\lambda)g(y), \quad q(y) = \frac{1}{4} + (k^2+k_0^2-1) \frac{y^2}{(1-y^2)^2} - kk_0y \frac{1+y^2}{(1-y^2)^2}. $$
	This equation is regular at $\infty$. Hence we can apply the method in \cite[\S 4.3]{Co61}. We expand $q(y)$ at $\infty$ as
	$$ q(y) = \frac{1}{4} + (k^2+k_0^2-1) \sideset{}{_{l = 1}^{\infty}} \sum \frac{l}{y^{2l}} - kk_0 \sideset{}{_{l = 1}^{\infty}} \sum \frac{2l-1}{y^{2l-1}}. $$
	
\noindent \textbf{Case (\rmnum{1}):} Expanding the relevant function at $\infty$ as
	$$ g(y) = y^{\pm s} \sideset{}{_{l = 0}^{\infty}} \sum c_l y^{-l} $$
	we deduce that for $l \in \Z_{\geq 0}$
	$$ c_{2l+1} = 0, \quad ((\pm s + 2l)^2 -s^2) c_{2l} = (k^2-1) \sideset{}{_{j=1}^l} \sum j c_{2(l-j)} \quad \Rightarrow \quad c_{2l} = (k^2-1)^l O_{\pi}(1) c_0. $$
	It follows that $f(y)$ has an expansion at $\infty$ as a linear combination of
	$$ y^{\pm s} \sqrt{\frac{y}{y^2-1}} \cdot \left( 1 + O_{\pi}(1) \sideset{}{_{l = 1}^{\infty}} \sum \frac{(k^2-1)^l}{y^{2l}} \right), $$
	the absolute value of which is $\gg \norm[k]^{-1/2-\theta}$ for $y = C \norm[k]$ with sufficiently large constant $C > 0$.
	
\noindent \textbf{Case (\rmnum{2}):} The proof is quite similar to the case (\rmnum{1}). We omit it.

\noindent \textbf{Case (\rmnum{3}):} First notice that by \cite[\S 2.5 (80), (81) \& (82)]{Go70} we only need to treat the case $k > 0$. Therefore $f(y) \neq 0$ only if $y > 0$. Lemma \ref{DisMC} gives a precise formula for $f$ with the change of parameters $k' := (k-p-1)/2$
	$$ f(e^{2x}) = \sqrt{\binom{p+k'}{p}} \tanh(x)^{k'} \cosh(x)^{-(p+1)}, $$
	which reaches its maximal value of modulus at
	$$ \cosh(x)^{-2} = \frac{p+1}{p+1+k'} \quad \Leftrightarrow \quad e^x = \sqrt{1+\frac{k'}{p+1}} + \sqrt{\frac{k'}{p+1}} \quad \Rightarrow \quad y=e^{2x} \asymp \frac{4k'}{p+1}. $$
	And the maximal value is
	$$ \sqrt{\frac{(p+k')!}{p! k'!}} \left( \frac{k'}{p+1+k'} \right)^{k'/2} \left( \frac{p+1}{p+1+k'} \right)^{(p+1)/2} \asymp_p k^{-1/2}, \quad k' \to \infty $$
	by Stirling's formula. We conclude the proof in this case.
\end{proof}

\begin{remark}
	We believe that with a finer analysis in the cases (\rmnum{1}) and (\rmnum{2}) given in the proof above, $-(1+2\theta)/2$ can be replaced by $-1/2$ in the estimation in (2). But there are some difficulties. On the one hand, although the asymptotic behavior of matrix coefficients has been a lot studied (c.f. \cite{CHH88} and its references), only the upper bounds have been well understood. On the other hand, although the relevant matrix coefficients can be expressed in terms of the classical Legendre functions \cite[\S 4.6.1]{MOS66} in the case (\rmnum{1}), the relevant estimations seem to be ``difficult to find'' (c.f. discussion after \cite[Proposition 5.4]{CU05}).
\label{LackLegEst}
\end{remark}

\begin{lemma}
	Let $\pi=\sigma_p, p \in \Z_{>0}$ be the discrete series representation defined in the case (\rmnum{3}) of the proof of Lemma \ref{LocalEst1} (2). For any $k \in \Z_{\geq 0}$, let $e_{p+1+2k}$ be a unitary vector of weight $p+1+2k$. Then
	$$ f_k(x) := \Pairing{\pi(a(e^{2x})).e_{p+1}}{e_{p+1+2k}}, \quad x \in \R $$
is equal to
	$$ f_k(x) = \sqrt{\binom{p+k}{p}} \tanh(x)^k \cosh(x)^{-(p+1)}. $$
\label{DisMC}
\end{lemma}
\begin{proof}
	The discussion in \cite[\S 2.7.1]{Wu14} implies that there exist $\tilde{e}_{p+1+2k}$ proportional to $e_{p+1+2k}$ satisfying
	$$ V_-.\tilde{e}_{p+1} = 0, \quad V_+.\tilde{e}_{p+1+2k} = \tilde{e}_{p+1+2(k+1)}; \quad V_{\pm} = \begin{bmatrix} 1 & \pm i \\ \pm i & -1 \end{bmatrix}. $$
	Expressing the relevant differential operators in the $(p+1,p+1+2k)$-spherical coordinates, we get
	$$ \left( - \frac{d}{dx} - (p+1) \tanh(x) \right) \tilde{f}_{0}(x) = 0, $$
	$$ \tilde{f}_{k+1}(x) = \left( - \frac{d}{dx} + (p+1+2k) \tanh(x) + \frac{2k}{\sinh(2x)} \right) \tilde{f}_k(x), $$
	for $\tilde{f}_k(x) := \Pairing{\pi(a(e^{2x})).\tilde{e}_{p+1}}{\tilde{e}_{p+1+2k}}$. Up to scalar, a solution is given by
	$$ \tilde{f}_k(x) \sim \tanh(x)^k \cosh(x)^{-(p+1)}. $$
	A Haar measure of $\PGL_2(\R)$ in the spherical coordinates can be given by $\sinh(2x) dx$ (c.f. \cite[(7.22)]{KL72}). We thus obtain
	$$ \int_0^{\infty} \left( \tanh(x)^k \cosh(x)^{-(p+1)} \right)^2 \sinh(2x) dx = \int_0^1 t^{p-1}(1-t)^k dt = \frac{\Gamma(p)\Gamma(k+1)}{\Gamma(p+k+1)} $$
	with the change of variables $t := \cosh(x)^{-2}$ and conclude by the theory of formal degree \cite[(9.4)]{Kn86}.
\end{proof}

\begin{lemma}
	Let $\pi$ be a unitary spherical representation with parameter $\leq \theta$. Let $e_0$ be a unitary spherical vector. Let $\iota_r$ be defined in (\ref{ArchCarEmbNS}) with $r \geq 1$. Then we have for any $\epsilon > 0$
	$$ \alpha(e_0; 1, \iota_r) \ll_{\epsilon} r^{-1+2\theta+\epsilon}. $$
\label{LocalEstSphUpBis}
\end{lemma}
\begin{proof}
	By the decay of matrix coefficients \cite[Theorem 2.31]{Wu14}, we have for any $\epsilon > 0$
	$$ \alpha(e_0; 1, \iota_r) = \extnorm{ \Pairing{a(r).e_0}{e_0} }^2 \ll_{\theta, \epsilon} \left( \Xi_{\R}(a(r))^{1-2\theta + \epsilon} \right)^2, $$
where $\Xi_{\R}$ is the Harisch-Chandra function at a real place. The desired bound follows from the bound of $\Xi_{\R}$ given in \cite[\S 5.2.2]{CU05}.
\end{proof}

	\subsection{$\B$-Split, Finite Place, $\E$-Split}
	
\begin{lemma}
	Let $v_0$ be a new vector of $\pi$, $\sigma_0$ be defined in Definition \ref{InvSpDef}. Write $r = \sideset{}{_{\vP \mid \vp}} \min \cond(\Omega_{\vP})$. We have with absolute implicit constant
	$$ \tilde{\alpha}(\sigma_0(\pi); \Omega, \iota_r) \geq \tilde{\alpha}(v_0; \Omega, \iota_r) \gg \Cond(\Omega)^{-1/2}. $$
If $\pi$ is spherical and $\Omega$ is unramified, i.e. $\cond(\Omega_w) = 0, \forall w \mid v$ or $r = 0$, then we have
	$$ \tilde{\alpha}(\sigma_0(\pi); \Omega, \iota_0) = \tilde{\alpha}(v_0; \Omega, \iota_0) = 1. $$
\label{LocalEst2}
\end{lemma}
\begin{proof}
	(\ref{SPFRIndividual}) implies the positivity of the local factors $\alpha(\cdot)$, hence the first inequality. The rest of the first part is a re-formulation of \cite[Proposition 3.1]{FMP} or  \cite[Lemma 11.7]{Ve10} or \cite[Corollary 4.8]{Wu14}. For the second part, by the definition of new vectors, we have
	$$ \alpha(v_0; \Omega, \iota_0) = \lvert L(1/2, \pi \otimes \chi) \rvert^2 \lVert v_0 \rVert^{-2} $$
where $\chi$ is such that $\Omega$ corresponds to $(\chi, \omega^{-1}\chi^{-1})$ via the/an identification $\E \simeq \F \times \F$ and $\lVert v_0 \rVert$ is calculated in the Whittaker model by
	$$ \lVert v_0 \rVert^2 = \zeta(2)^{-1} L(1, \pi \times \tilde{\pi}) = \zeta(2)^{-1} \zeta(1) L(1,\pi,{\rm Ad}). $$
We conclude by noting that $L(1,\eta)=\zeta(1)$ (because $\E$ is split), and
	$$ L(1/2, \pi_{\E} \otimes \Omega) = L(1/2,\pi \otimes \chi) L(1/2, \pi \otimes \omega^{-1}\chi^{-1}), \quad L(1/2, \pi \otimes \omega^{-1}\chi^{-1}) = \overline{L(1/2, \pi \otimes \chi)}. $$
\end{proof}

	\subsection{$\B$-Split, Finite Place, $\E$-nonsplit, $\pi$ spherical}
	
	There exist unramified quasi-characters $\mu_1,\mu_2$ of $\F^{\times}$ such that $\pi = \pi(\mu_1,\mu_2) = \Ind_{\gp{B}}^{\GL_2} (\mu_1,\mu_2)$. Write
	$$ \alpha_1=\mu_1(\varpi), \alpha_2=\mu_2(\varpi). $$
Let $v_0 \in \pi$ be a spherical vector. The function 
	$$ g \mapsto \frac{\langle \pi(g).v_0, v_0 \rangle}{\langle v_0, v_0 \rangle} $$
is $\gp{K}$-bi-invariant. Its value on $\gp{K}a(\varpi^m)\gp{K}, m \in \N$ is given by Macdonald formula (\cite[Theorem 4.6.1]{Bu98})
	$$ \sigma(\pi,m) = \frac{q^{-m/2}}{1+q^{-1}} \left( \alpha_1^m \frac{1-q^{-1}\alpha_2\alpha_1^{-1}}{1-\alpha_2\alpha_1^{-1}} + \alpha_2^m \frac{1-q^{-1}\alpha_1\alpha_2^{-1}}{1-\alpha_1\alpha_2^{-1}} \right). $$
Since we have
	$$ \F^{\times} \backslash \B^{\times} = \vo^{\times} \backslash \bigsqcup_{m \geq 0} \gp{K} a(\varpi^m) \gp{K}, $$
we get for any embedding $\iota: \E \to \B$
	$$ \alpha(v_0; \Omega, \iota) = \int_{\F^{\times} \backslash \E^{\times}} \frac{\langle \pi(\iota(t)).v_0, v_0 \rangle}{\langle v_0, v_0 \rangle} \Omega(t) dt = \sum_{m \geq 0} \sigma(\pi,m) \int_{\vo^{\times} \backslash \iota^{-1}(Ka(\varpi^m)K)} \Omega(t) dt. $$
If the \emph{(logarithmic) conductor} of $\iota$ is $r$ (see Proposition \ref{LogCondEmbd}), then it is easy to see
	$$ \iota^{-1}(\gp{K}a(\varpi^m)\gp{K}) = \vO_r^{(m)} $$
where the right hand side is defined in (\ref{ArithQuadSet}). We have obtained
\begin{equation}
	\alpha(v_0; \Omega, \iota) = \sum_{m \geq 0} \sigma(\pi,m) \int_{\vo^{\times} \backslash \vO_r^{(m)}} \Omega(t) dt.
\label{alphatoArith}
\end{equation}
	Let's write $e(\E/\F) \in \{ 1,2 \}$ for the ramification index of $\E/\F$ and define
	$$ r_0 = \IntCP{\cond(\Omega)/e(\E/\F)}. $$
	By Lemma \ref{LIDecomp} and Corollary \ref{LIQuotient}, for $r\geq 1$ (\ref{alphatoArith}) becomes, if $\E/\F$ is unramified
\begin{align*}
	\frac{\alpha(v_0; \Omega, \iota)}{\Vol(\gp{Z} \backslash \gp{T})} &= \sigma(\pi, 0) \cdot \frac{1_{r \geq r_0}}{q^r(1+q^{-1})} + \frac{\sigma(\pi, 2r)}{(\alpha_1 \alpha_2)^r} \cdot \left( 1_{0 \geq r_0} - \frac{1_{1 \geq r_0}}{q(1+q^{-1})} \right) \\
	&+ \sum_{m=1}^{r-1} \frac{\sigma(\pi,2m)}{(\alpha_1 \alpha_2)^m} \cdot \left( \frac{1_{r-m \geq r_0}}{q^{r-m}(1+q^{-1})} - \frac{1_{r-m+1 \geq r_0}}{q^{r-m+1}(1+q^{-1})} \right);
\end{align*}
while if $\E/\F$ is ramified
\begin{align*}
	\frac{\alpha(v_0; \Omega, \iota)}{\Vol(\gp{Z} \backslash \gp{T})} &= \sigma(\pi, 0) \cdot \frac{1_{r \geq r_0}}{2q^r} + \frac{\sigma(\pi, 2r+1)}{\Omega(\varpi_{\E})(\alpha_1 \alpha_2)^r} \cdot \frac{1_{0 \geq r_0}}{2} \\
	&+ \sum_{m=1}^r \frac{\sigma(\pi,2m)}{(\alpha_1 \alpha_2)^m} \cdot \left( \frac{1_{r-m \geq r_0}}{2q^{r-m}} - \frac{1_{r-m+1 \geq r_0}}{2q^{r-m+1}} \right).
\end{align*}
Thus if $r=r_0>0$, we get
	$$ \frac{\alpha(v_0; \Omega, \iota)}{\Vol(\gp{Z} \backslash \gp{T})} = \frac{1-\sigma(\pi,2)(\alpha_1 \alpha_2)^{-1}}{q^{r_0}} \cdot \left\{ \begin{matrix} (1+q^{-1})^{-1} & \text{if }\E/\F \text{ unramified}; \\ 1/2 & \text{if }\E/\F \text{ ramified}. \end{matrix} \right. $$
\begin{lemma}
	Let $r_0 = \IntCP{\cond(\Omega)/e(\E/\F)}$ and $v_0 \in \pi$ be a spherical vector. If $\cond(\Omega) > 0$ and the conductor of $\iota$ is $r_0$, then we have
	$$ \frac{\alpha(v_0; \Omega, \iota)}{\Vol(\gp{Z} \backslash \gp{T})} = q^{-r_0} \frac{\zeta(2)L(1,\eta)}{e(\E/\F)L(1,\pi, {\rm Ad})}, $$
where $e(\E/\F)$ is the ramification index of $\E/\F$ and $\eta$ is the quadratic character associated with $\E/\F$.
\label{LocalEst4}
\end{lemma}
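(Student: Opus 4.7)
The plan is to massage the explicit formula for $\tilde{\alpha}(v_0;\Omega,\iota)$ already derived just above the lemma statement, namely
$$ \tilde{\alpha}(v_0; \Omega, \iota) = \frac{1-\sigma(\pi,2)(\alpha_1\alpha_2)^{-1}}{q^{\cond(\Omega)}} \cdot c_{\E/\F}, \qquad c_{\E/\F} = \begin{cases} (1+q^{-1})^{-1} & \E/\F \text{ unramified}, \\ 1/2 & \E/\F \text{ ramified}, \end{cases} $$
and identify the unipotent prefactor with $\zeta(2) L(1,\eta)/(e(\E/\F) L(1,\pi,{\rm Ad}))$ by a routine algebraic computation using Macdonald's formula and the standard factorization of the adjoint $L$-factor.

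First, I will expand Macdonald's formula at $m=2$. Clearing the denominator $(1-\alpha_2\alpha_1^{-1})$ in the two summands, direct simplification gives
$$ \sigma(\pi,2) = \frac{q^{-1}}{1+q^{-1}} \bigl( (\alpha_1+\alpha_2)^2 - (1+q^{-1})\alpha_1\alpha_2 \bigr), $$
and hence, after a short manipulation,
$$ 1 - \sigma(\pi,2)(\alpha_1\alpha_2)^{-1} = \frac{(1-q^{-1}\alpha_1\alpha_2^{-1})(1-q^{-1}\alpha_2\alpha_1^{-1})}{1+q^{-1}}. $$

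Second, I will use the standard identity $L(s,\pi,{\rm Ad}) = \zeta(s) L(s,\mu_1\mu_2^{-1}) L(s,\mu_1^{-1}\mu_2)$ at $s=1$, which for $\pi=\pi(\mu_1,\mu_2)$ spherical yields
$$ L(1,\pi,{\rm Ad}) = \bigl( (1-q^{-1})(1-q^{-1}\alpha_1\alpha_2^{-1})(1-q^{-1}\alpha_2\alpha_1^{-1}) \bigr)^{-1}. $$
Combined with $\zeta(2)=((1-q^{-1})(1+q^{-1}))^{-1}$, this gives exactly
$$ \frac{\zeta(2)}{L(1,\pi,{\rm Ad})} = 1 - \sigma(\pi,2)(\alpha_1\alpha_2)^{-1}. $$

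Finally, one checks case by case that $c_{\E/\F} = L(1,\eta)/e(\E/\F)$: in the unramified case $L(1,\eta)=(1+q^{-1})^{-1}$ and $e=1$, in the ramified case $L(1,\eta)=1$ and $e=2$. Substituting everything into the boxed expression for $\tilde{\alpha}(v_0;\Omega,\iota)$ yields the claim. There is no real obstacle here: the lemma is essentially a bookkeeping corollary of the pre-lemma computation, and the only step that requires any care is the algebraic simplification of $\sigma(\pi,2)/(\alpha_1\alpha_2)$, which is purely mechanical.
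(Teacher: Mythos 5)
Your proof is correct and follows exactly the route the paper intends: the displayed formula for $\tilde{\alpha}(v_0;\Omega,\iota)$ immediately preceding the lemma is the real content, and the lemma itself is the algebraic bookkeeping you carry out — identifying $1-\sigma(\pi,2)(\alpha_1\alpha_2)^{-1}$ with $\zeta(2)/L(1,\pi,\mathrm{Ad})$ via Macdonald's formula and the factorization of the adjoint $L$-factor, and matching the two ramification constants with $L(1,\eta)/e(\E/\F)$. All the intermediate simplifications you record check out.
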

\begin{proof}
	This is simply a way to re-write the equation before the lemma.
\end{proof}
\noindent If $r_0 = 0$, i.e., $\cond(\Omega)=0$, and $\E/\F$ is unramified, we get for $r\geq 1$
	$$ \frac{\alpha(v_0; \Omega, \iota)}{\Vol(\gp{Z} \backslash \gp{T})} = \frac{1}{q^r(1+q^{-1})} \left( 1+(1-q^{-1}) \sum_{m=1}^r \frac{q^m\sigma(\pi,2m)}{(\alpha_1 \alpha_2)^m} \right); $$
while if $\E/\F$ is ramified, we get for $r \geq 1$
	$$ \frac{\alpha(v_0; \Omega, \iota)}{\Vol(\gp{Z} \backslash \gp{T})} = \frac{1}{2q^r} \left( 1+\frac{q^r\sigma(\pi,2r+1)}{\Omega(\varpi_{\E})^{2r+1}} + (1-q^{-1}) \sum_{m=1}^r \frac{q^m\sigma(\pi,2m)}{(\alpha_1 \alpha_2)^m} \right). $$
If $\cond(\Omega)=r=0$ and $\E/\F$ is unramified, we get
	$$ \frac{\alpha(v_0; \Omega, \iota)}{\Vol(\gp{Z} \backslash \gp{T})} = 1 = \frac{\zeta(2)L(1/2,\pi_{\E}\otimes \Omega)}{L(1,\eta)L(1,\pi,{\rm Ad})} $$
since in this case $L(1,\eta)=\zeta(2)\zeta(1)^{-1}$ and
	$$ L(1/2,\pi_{\E}\otimes \Omega)=(1-\alpha_1 \alpha_2^{-1}q_{\E}^{-1})^{-1/2}(1-\alpha_2 \alpha_1^{-1}q_{\E}^{-1/2})^{-1} =\zeta(1)^{-1}L(1,\pi,{\rm Ad}); $$
while if $\E/\F$ is ramified, we get
\begin{align*}
	\frac{\alpha(v_0; \Omega, \iota)}{\Vol(\gp{Z} \backslash \gp{T})} &= 1+\sigma(\pi,1)\Omega(\varpi_{\E}) = \frac{(1+\alpha_1\Omega(\varpi_{\E})q^{-1/2})(1+\alpha_2\Omega(\varpi_{\E})q^{-1/2})}{1+q^{-1}} \\
	&= \frac{(1-\alpha_1\alpha_2^{-1}q^{-1})(1-\alpha_2\alpha_1^{-1}q^{-1})(1-q^{-1})}{(1-q^{-2})(1-\alpha_1 \Omega(\varpi_{\E})q^{-1/2})(1-\alpha_2 \Omega(\varpi_{\E})q^{-1/2})} = \frac{\zeta(2)L(1/2,\pi_{\E}\otimes \Omega)}{L(1,\eta)L(1,\pi,{\rm Ad})}
\end{align*}
since $\Omega(\varpi_{\E})^2=(\alpha_1\alpha_2)^{-1}$ and $L(1,\eta)=1$ in this case.
\begin{lemma}
	Let $v_0 \in \pi$ be a spherical vector. If $\cond(\Omega)=0$ and the conductor of $\iota$ is $r \geq 1$, then we have the estimation with absolute implicit constant
	$$ \frac{\alpha(v_0; \Omega, \iota)}{\Vol(\gp{Z} \backslash \gp{T})} \ll r q^{-(1-2\theta)r}, $$
where $\theta$ is a constant towards the Ramanujan conjecture. If $\cond(\Omega)$ and the conductor of $\iota$ are both $0$, then we have
	$$ \frac{\alpha(v_0; \Omega, \iota)}{\Vol(\gp{Z} \backslash \gp{T})} = \frac{\zeta(2)L(1/2,\pi_{\E}\otimes \Omega)}{L(1,\eta)L(1,\pi,{\rm Ad})}. $$
\label{LocalEstSphUp}
\end{lemma}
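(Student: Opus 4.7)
The first observation is that the second assertion of the lemma (both $\cond(\Omega)$ and the conductor of $\iota$ equal to zero) is already established in the paragraph immediately preceding the statement, for the unramified as well as for the ramified case of $\E/\F$. Hence the plan is to prove the first, quantitative assertion.

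The starting point is the pair of explicit formulas derived just above for $\tilde{\alpha}(v_0;\Omega,\iota)$ when $\cond(\Omega)=0$ and $r\geq 1$. After the trivial ``1'' contribution (which gives $\ll q^{-r}$), both formulas reduce to controlling the arithmetic sum
$$ S_r \;=\; \sum_{m=1}^{r} \frac{q^{m}\sigma(\pi,2m)}{(\alpha_1\alpha_2)^{m}}, $$
together with, in the ramified case, the boundary term $q^{r}\sigma(\pi,2r+1)/\Omega(\varpi_{\E})^{2r+1}$. Since $\omega_\pi$ is unitary we have $|\alpha_1\alpha_2|=1$ and $|\Omega(\varpi_{\E})|=1$, so these denominators only remove a phase; the question becomes one of estimating the Macdonald values $\sigma(\pi,m)$ themselves.

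The key manipulation is to rewrite Macdonald's formula in the \emph{polynomial} form
$$ \sigma(\pi,m) \;=\; \frac{q^{-m/2}}{1+q^{-1}}\left(\sum_{k=0}^{m}\alpha_1^{k}\alpha_2^{m-k} \;-\; q^{-1}\alpha_1\alpha_2\sum_{k=0}^{m-2}\alpha_1^{k}\alpha_2^{m-2-k}\right), $$
obtained by clearing the common denominator $\alpha_1-\alpha_2$. This rewriting removes the apparent pole at $\alpha_1=\alpha_2$ which would otherwise destroy any uniform constant. From $|\alpha_i|\leq q^{\theta}$ and $|\alpha_1\alpha_2|=1$ one has the trivial monomial bound $|\alpha_1^{k}\alpha_2^{m-k}|\leq q^{\theta m}$, giving $|\sigma(\pi,m)|\ll (m+1)q^{(\theta-1/2)m}$ with an absolute implicit constant; equivalently,
$$ \left|\frac{q^{m}\sigma(\pi,2m)}{(\alpha_1\alpha_2)^{m}}\right| \;\ll\; (m+1)\,q^{2\theta m}. $$
Summing geometrically from $m=1$ to $r$ gives $|S_r|\ll r\,q^{2\theta r}$ (the factor $r$ comfortably absorbs both the series constant $1/(q^{2\theta}-1)$, which is problematic only as $\theta\to 0$, and the per-term factor $m+1$). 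The boundary term in the ramified case admits the same bound by evaluating at $m=r$. Combining with the prefactor $q^{-r}$ in the explicit formulas yields the claimed $\tilde{\alpha}(v_0;\Omega,\iota)\ll r\,q^{-(1-2\theta)r}$.

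The main obstacle is the absence of a uniform bound on the individual Macdonald coefficients $(1-q^{-1}\alpha_2\alpha_1^{-1})/(1-\alpha_2\alpha_1^{-1})$ when $\alpha_1$ and $\alpha_2$ are close, which would spoil the absoluteness of the implied constant. The polynomial reformulation bypasses this by replacing the quotient by a Schur-type symmetric sum, for which only the crude $L^{\infty}$-bound on each monomial is needed. Everything else is a straightforward geometric summation.
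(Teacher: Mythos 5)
Your proposal takes the natural route, and in fact the paper itself does not spell out a proof at all: the lemma is simply asserted after the explicit formulas for $\tilde{\alpha}(v_0;\Omega,\iota)$ in the cases $r=0$ and $r\geq 1$ are derived. So you are filling in an omitted argument, and you have identified the key manipulation — replacing Macdonald's quotient form by the Schur-polynomial form
$\sum_{k=0}^{m}\alpha_1^k\alpha_2^{m-k} - q^{-1}\alpha_1\alpha_2\sum_{k=0}^{m-2}\alpha_1^k\alpha_2^{m-2-k}$, which removes the apparent singularity at $\alpha_1=\alpha_2$ and gives $|\sigma(\pi,m)|\ll (m+1)q^{(\theta-\frac{1}{2})m}$ with a genuinely absolute constant. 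The treatment of the ramified boundary term $q^r\sigma(\pi,2r+1)/\Omega(\varpi_{\E})^{2r+1}$, and the observation that the second part of the lemma is already established in the paragraph preceding it, are both correct.

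One small point you pass over too quickly: the claim that ``the factor $r$ comfortably absorbs both the series constant $1/(q^{2\theta}-1)$ \ldots and the per-term factor $m+1$.'' The factor $(2m+1)\leq(2r+1)$ is absorbed by $r$, and the geometric tail is absorbed by $1/(q^{2\theta}-1)$; but you cannot use the single factor $r$ to absorb both simultaneously. Concretely, with a constant that is uniform in $\theta$ (and in $q$) one only gets
$\sum_{m=1}^{r}(2m+1)q^{2\theta m}\ll (2r+1)\min\bigl(r,\ 1/(1-q^{-2\theta})\bigr)\,q^{2\theta r}$,
i.e.\ $r^2q^{2\theta r}$ in the worst case $q^{2\theta}\to 1$. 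For a fixed admissible $\theta>0$ (e.g.\ $\theta=7/64$), the quantity $1/(2^{2\theta}-1)$ is a fixed constant and the stated bound $r\,q^{-(1-2\theta)r}$ holds with that constant folded in, which is clearly what the lemma intends. This caveat is immaterial for the eventual global application (the exponential $q^{-(1-2\theta)r}$ dominates any polynomial in $r$), but it is worth stating precisely, since the absorbed constant diverges as $\theta\to 0$ while the written bound silently does not.
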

\begin{remark}
	The second part of the previous lemma is a slight generalization of \cite[Proposition 5.9 (a) \& Proposition 5.10]{CU05}.
\end{remark}

	\subsection{$\B$-Split, Finite Place, $\E$-nonsplit, $\pi$ ramified non supercuspidal}
	
	There exist quasi-characters $\mu_1,\mu_2$ of $\F^{\times}$ such that $\pi = \pi(\mu_1,\mu_2)$ is (maybe a subquotient of) $\Ind_{\gp{B}}^{\GL_2} (\mu_1,\mu_2)$. The Waldspurger vector $\hat{v}_r'$ is realized in the induced model by the function $\hat{f}_r'$ (Definition \ref{conventionHat_r}), determined by (c.f. \cite[(4.4)]{FMP})
	$$ \hat{f}_r'\left( \begin{pmatrix} a & * \\ 0 & d \end{pmatrix} \iota_r'(t) \right) = \left\lvert \frac{a}{d} \right\rvert^{1/2} \mu_1(a) \mu_2(d) \Omega^{-1}(t), \quad \forall a,d\in \F^{\times}, t \in \E^{\times}. $$
\begin{lemma}
	Let $c \geq \max (\cond(\mu_1), \cond(\mu_2), 1)$ and $r_0 = \IntCP{\cond(\Omega)/e(\E/\F)}$ as before. Recall the notations in Definition \ref{InvSpDef}.
\begin{itemize}
	\item[(1)] Assume
\begin{itemize}
	\item[-] $\cond(\Omega) \leq c$ if $\E/\F$ is unramified;
	\item[-] $\cond(\Omega) \leq 2c-1$ if $\E/\F$ is ramified.
\end{itemize}
Then we have $\hat{v}_0' = \hat{v}_{0,\Omega}' \in [\pi; c]$. Hence $\Proj_c(\hat{v}_0') = \hat{v}_0'$. Consequently, by (\ref{NSPFRAverage})
	$$ \frac{\alpha([\pi;c]; \Omega, \iota_0')}{\Vol(\gp{Z} \backslash \gp{T})} = 1. $$	
	\item[(2)] Assume
\begin{itemize}
	\item[-] $\cond(\Omega) > c$ if $\E/\F$ is unramified;
	\item[-] $\cond(\Omega) > 2c-1$ if $\E/\F$ is ramified.
\end{itemize}
Take $r=r_0-c$. Then we have
	$$ \lVert \Proj_c(\hat{v}_r') \rVert^2 = q^{-r}\zeta_{\E,\vp}(1) \lVert \hat{v}_r' \rVert^2. $$
Consequently, by (\ref{NSPFRAverage})
	$$ \frac{\alpha([\pi;c]; \Omega, \iota_r')}{\Vol(\gp{Z} \backslash \gp{T})} = q^{-r}\zeta_{\E,\vp}(1) (\asymp_c \Cond(\Omega)^{-1/2}). $$
\end{itemize}
\label{LocalEst5}
\end{lemma}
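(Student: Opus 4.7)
\medskip

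\noindent\textbf{Proof proposal.} The plan is to work throughout in the induced model of $\pi$, with the explicit formula for $\hat{f}_r$ recalled just before the lemma, and to reduce both parts to a computation of how $K(\vp^c)$-translation interacts with the open set $B\iota_r(T)\subset G$ supporting $\hat{f}_r$. In both parts the final statement on $\tilde{\alpha}$ follows immediately from (\ref{NSPFRAverage}) once $\Proj_c(\hat{v}_r)$ is understood.

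For part (1), I would prove directly that $\hat{f}_0$ is right-invariant under $K(\vp^c)$. For $k\in K(\vp^c)$ and $g=b\iota_0(t)\in B\iota_0(T)$, one rewrites $\iota_0(t)k=b'\iota_0(t')$ by an Iwasawa-type decomposition relative to $\iota_0$, and one needs
$$ \mu_1\!\left(\tfrac{a'}{a}\right)\mu_2\!\left(\tfrac{d'}{d}\right)\left|\tfrac{a'/d'}{a/d}\right|^{1/2}\Omega\!\left(\tfrac{t}{t'}\right)=1 $$
for all admissible $(k,t)$. The $\mu_i$-factors drop because $c\geq \cond(\mu_i)$ forces $a'/a,d'/d\in 1+\vp^c$. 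The shape of $t/t'\in\vO^{\times}$ depends on whether $\E/\F$ is unramified (in which case $t/t'\in 1+\varpi^c\vO$, so $\cond(\Omega)\leq c$ suffices) or ramified (in which case the valuation jumps by $1/2$ in $\E$, so the sharper condition $\Omega\mid_{1+\varpi_\E^{2c-1}\vO}=1$ is needed, reflecting the same parity phenomenon visible in Lemma \ref{LIDecomp}(3)). Once this is verified, $\hat{v}_0\in[\pi;c]$ and so $\Proj_c(\hat{v}_0)=\hat{v}_0$, giving $\tilde{\alpha}([\pi;c];\Omega,\iota_0)=1$.

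For part (2), I would use the averaging representation of the projector,
$$ \Proj_c(\hat{v}_r)=\int_{K(\vp^c)}\pi(k).\hat{v}_r\,dk, $$
with $K(\vp^c)$ carrying its probability measure, and then
$$ \|\Proj_c(\hat{v}_r)\|^2=\int_{K(\vp^c)}\langle\pi(k).\hat{v}_r,\hat{v}_r\rangle\,dk. $$
Unfolding the matrix coefficient on $K$ via Iwasawa and using the $T_r$-equivariance of $\hat{f}_r$, this becomes an integral of $\Omega$-values along elements $t(k)\in\E^{\times}$ produced by the decomposition $\iota_r(\cdot)k=b'_k\iota_r(t(k))$. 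The conjugation $\iota_r=a(\varpi^r)\iota_0 a(\varpi^{-r})$ shifts the relevant valuation by $r$, so that for the critical choice $r=\cond(\Omega)-c$, the character $\Omega(t(k))$ is trivial precisely when $k$ lies in the subgroup $K_0(\vp)\cap K(\vp^c)$ of $K(\vp^c)$, and averages to $0$ on its complement by orthogonality of additive characters. Since $[K(\vp^c):K_0(\vp)\cap K(\vp^c)]=[K:K_0(\vp)]=1+q^{-1}$, the sought ratio is $(1+q^{-1})^{-1}$.

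The main obstacle is the case distinction in the Iwasawa-type decomposition across split, unramified non-split, and ramified non-split $\E/\F$, and the bookkeeping of valuations at the transition from ``inside'' to ``outside'' the conductor range of $\Omega$; the parity issue producing $2c-1$ rather than $2c$ in the ramified case, together with the double coset description (\ref{IwaforEK0-1})--(\ref{IwaforEK0-2}) from \cite{FMP}, is the delicate ingredient that drives both the hypotheses and the uniform answer $(1+q^{-1})^{-1}$.
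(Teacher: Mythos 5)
Your proposal splits cleanly into a part that mirrors the paper and a part that does not hold up.

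For part (1) your strategy is essentially the paper's: reduce to showing $\hat{f}_0$ is right-$K(\vp^c)$-invariant by conjugating $k$ through the $B\iota_0(\E^\times)$-decomposition and matching the character factors. The unramified case is indeed as short as you indicate. But for the ramified case you hand-wave the delicate step (the check the paper labels (\ref{Inv2})): it requires decomposing $\iota_0(\varpi_\E)K(\vp^c)\iota_0(\varpi_\E)^{-1}$ with the explicit uniformizer $\varpi_\E=\beta-u_0$, tracking that the resulting $\E^\times$-component lies in $(1-u_0^{-1}\beta)(1+\varpi_\E^{2c-1}\vO)$, and using $\Omega(-u_0)=\omega^{-1}(-u_0)$ to close the loop. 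Pointing to ``parity phenomena in Lemma \ref{LIDecomp}(3)'' does not substitute for this computation, and this is exactly where the hypothesis $\Omega\mid_{1+\varpi_\E^{2c-1}\vO}=1$ gets used.

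For part (2) there is a genuine gap, and the final step is simply wrong. You claim the set of $k\in K(\vp^c)$ for which $\Omega(t(k))=1$ is $K_0(\vp)\cap K(\vp^c)$, with index $[K(\vp^c):K_0(\vp)\cap K(\vp^c)]=[K:K_0(\vp)]=1+q^{-1}$. Since $c\geq 1$, we have $K(\vp^c)\subseteq K_0(\vp^c)\subseteq K_0(\vp)$, so $K_0(\vp)\cap K(\vp^c)=K(\vp^c)$ and that index is $1$, not $1+q^{-1}$; moreover $1+q^{-1}$ is not an integer, and $[K:K_0(\vp)]=q+1$, not $1+q^{-1}$. So the claimed dichotomy on $K(\vp^c)$ cannot produce the factor. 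More structurally, the matrix coefficient $\langle\pi(k).\hat{v}_r,\hat{v}_r\rangle$ is itself an integral over $K$, so ``$t(k)$'' in your unfolding depends on both $k$ and the inner integration variable; there is no clean subgroup of $K(\vp^c)$ on which the phase is trivial. The missing ingredient is precisely the branching law the paper proves (Lemma \ref{BLNS}): the $\Omega^{-1}$-isotypic component of $[\pi;c]$ under $\iota_r(\vO_r^\times)$ is supported on the double coset $K_0(\vp^c)\iota_r(\vO_r^\times)$ described in (\ref{SppS1}). Combined with the $T_r$-equivariance $\pi(\iota_r(t_0)).\Proj_c(\hat{v}_r)=\Omega^{-1}(t_0)\Proj_c(\hat{v}_r)$ and the explicit value $\Proj_c(\hat{f}_r)(1)=1$, this pins the ratio $\|\Proj_c(\hat{f}_r)\|^2/\|\hat{f}_r\|^2$ to the measure ratio ${\rm Vol}(K_0(\vp^c)\iota_r(\vO_r^\times))/{\rm Vol}(K)=q^c/(q^c+q^{c-1})=(1+q^{-1})^{-1}$, a volume ratio inside $K$ and not an index inside $K(\vp^c)$. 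A direct norm computation bypassing the branching law would need to re-derive exactly this double-coset support description, so your sketch is not actually a shortcut.
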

\begin{proof}[Proof of Lemma \ref{LocalEst5} (1)]
	For any $\kappa \in \gp{K}(\vp^c), b \in \gp{B}, t \in \E^{\times}$, we have
	$$ \pi(\kappa).\hat{f}_0(b\iota_0'(t)) = \hat{f}_0'(b \iota_0'(t) \kappa \iota_0'(t)^{-1} \iota_0'(t)) = \hat{f}_0'(b\iota_0'(t)) \cdot \hat{f}_0'(\iota_0'(t) \kappa \iota_0'(t)^{-1}). $$
Hence it suffices to prove
	$$ \hat{f}_0'(\iota_0'(t) \kappa \iota_0'(t)^{-1}) =1, \quad \forall \kappa \in \gp{K}(\vp^c), t \in \E^{\times}. $$
Note that
	$$ \iota_0'(\E^{\times}) = \left\{ \begin{matrix} \gp{Z} \iota_0'(\vO^{\times}) & \text{if } \E/\F \text{ is unramified}, \\ \gp{Z} \iota_0'(\vO^{\times}) \bigsqcup \iota_0'(\varpi_{\E}) \gp{Z} \iota_0'(\vO^{\times}) & \text{if } \E/\F \text{ is ramified}, \end{matrix}  \right. $$
and $\iota_0'(\vO^{\times}) \subseteq \gp{K}$ which stabilizes $\gp{K}(\vp^c)$. We only need to show that
\begin{equation}
	\hat{f}_0'(\kappa) =1, \quad \forall \kappa \in \gp{K}(\vp^c),
\label{Inv1}
\end{equation}
and if $\E/\F$ is ramified
\begin{equation}
	\hat{f}_0'(\iota_0'(\varpi_{\E}) \kappa \iota_0'(\varpi_{\E})^{-1}) =1, \quad \forall \kappa \in \gp{K}(\vp^c).
\label{Inv2}
\end{equation}
	Recall the notations in \S \ref{ClassEmb}. Note that we have the decomposition
	$$ \kappa = \begin{pmatrix} x_1 & x_2 \\ x_3 & x_4 \end{pmatrix} = \begin{pmatrix} \det \kappa \cdot \Nr(x_4-\fa'^{-1}x_3\beta')^{-1} & * \\ 0 & 1 \end{pmatrix} \iota_0'(x_4-\fa'^{-1}x_3\beta'). $$
If $\kappa \in \gp{K}(\vp^c)$, then $\det \kappa \in 1+\vp^c, x_3 \in \vp^c, x_4 \in 1+\vp^c$, hence $x_4-\fa'^{-1}x_3\beta' \in 1+\varpi^c \vO, \Nr(x_4-\fa'^{-1}x_3\beta') \in 1+\vp^c$. We get (\ref{Inv1}) by noting $c\geq \cond(\mu_1), e(\E/\F)c \geq\cond(\Omega)$ hence
	$$ \hat{f}_0'(\kappa) \in \mu_1(1+\vp^c) \Omega^{-1}(1+\varpi^c \vO) = \{1\}. $$
If $\E/\F$ is ramified, we can take $\varpi_{\E} = \beta' - u_0$ with $u_0$ given by (\ref{u0}). Hence
\begin{align*}
	\iota_0'(\varpi_{\E}) \gp{K}(\vp^c) \iota_0'(\varpi_{\E})^{-1} &\subseteq \begin{pmatrix} \fb'-u_0-\fa' u_0^{-1} & 1 \\ 0 & -u_0 \end{pmatrix} \begin{pmatrix} 1 & 0 \\ \fa' u_0^{-1} & 1 \end{pmatrix} \gp{K}(\vp^c) \iota_0'(\varpi_{\E})^{-1} \\
	&= \begin{pmatrix} \fb'-u_0-\fa' u_0^{-1} & 1 \\ 0 & -u_0 \end{pmatrix} \begin{pmatrix} 1+\vp^c & \vp^c \\ 0 & 1+\vp^c \end{pmatrix} \begin{pmatrix} 1 & \\ \vp^c & 1 \end{pmatrix} \begin{pmatrix} 1 & 0 \\ \fa' u_0^{-1} & 1 \end{pmatrix} \iota_0'(\varpi_{\E})^{-1} \\
	&\subseteq \begin{pmatrix} (\fb'-u_0-\fa' u_0^{-1})(1+\vp^c) & 1+\vp^c \\ 0 & -u_0(1+\vp^c) \end{pmatrix} \begin{pmatrix} 1 & 0 \\ \fa' u_0^{-1}+\vp^c & 1 \end{pmatrix} \iota_0'(\varpi_{\E})^{-1}.
\end{align*}
	Note that
	$$ \begin{pmatrix} 1 & 0 \\ \fa' u_0^{-1}+\vp^c & 1 \end{pmatrix} \subseteq \begin{pmatrix} \Nr(1-u_0^{-1}\beta' + \vp^c \beta')^{-1} & \F \\ 0 & 1 \end{pmatrix} \iota_0'(1-u_0^{-1}\beta'+\vp^c \beta'). $$
	But $1-u_0^{-1}\beta' + \vp^c \beta' \subseteq (1-u_0^{-1}\beta')(1 + \varpi_{\E}^{2c-1} \vO)$, and $\Omega$ is trivial on $1 + \varpi_{\E}^{2c-1} \vO$ by assumption. Thus
	$$ \hat{f}_0'(\iota_0'(\varpi_{\E}) \gp{K}(\vp^c) \iota_0'(\varpi_{\E})^{-1}) = \left\lvert \frac{\fb'-u_0-\fa' u_0^{-1}}{\Nr(1-u_0^{-1}\beta')} \right\rvert^{1/2} \mu_1\left(\frac{\fb'-u_0-\fa' u_0^{-1}}{\Nr(1-u_0^{-1}\beta')}\right)\mu_2(-u_0) \Omega\left( \frac{-u_0+\beta'}{1-u_0^{-1}\beta'} \right), $$
which is $1$ since $\Omega(-u_0)=\omega(-u_0)^{-1}=\mu_1^{-1}\mu_2^{-1}(-u_0)$, proving (\ref{Inv2}).


\end{proof}

	The proof of Lemma \ref{LocalEst5} (2) is based on a property of the branching law for $[\pi; c]$ restricted to $\iota_r'(\vO_r^{\times})$. A double coset decomposition with $\kappa_1=1$
	$$ \gp{K} = \sideset{}{_i} \bigsqcup \gp{K}_0(\vp^c) \kappa_i \iota_r'(\vO_r^{\times}) $$
yields an orthogonal decomposition
	$$ \Ind_{\gp{K}_0(\vp^c)}^{\gp{K}} (\mu_1, \mu_2) = \sideset{}{_i} \bigoplus \mathcal{S}_i(c), $$
where $\mathcal{S}_i(c) = \mathcal{S}_{\kappa_i}(c)$ is the subspace of functions supported in $\gp{K}_0(\vp^c) \kappa_i \iota_r'(\vO_r^{\times})$.
\begin{remark}
	Note that $\gp{K}_0(\vp^c) \iota_r'(\vO_r^{\times})$ is ``big'' by (\ref{SppS1}) and (\ref{DecompK}), hence intuitively should ``hold'' more irreducible representations of $\E^{\times}$ than the other cosets.
\end{remark}
\begin{lemma}
\noindent (1) The restriction of $\Ind_{\gp{K}_0(\vp^c)}^{\gp{K}} (\mu_1, \mu_2)$ to $\iota_r'(\vO_r^{\times})$ only contains characters $\Omega'$ of $\vO_r^{\times}$ with $r_0' \leq r+c$ where $r_0' := \IntCP{\cond(\Omega') / e(\E/\F)}$.

\noindent (2) If $r_0'=r+c$, $\Omega'$-isotypic space lies in $\mathcal{S}_1(c)$ with support $\gp{K}_0(\vp^c) \iota_r'(\vO_r^{\times})$ (including the case $r=0$) and is of dimension $1$ except in the case $\E/\F$ ramified, $r=0, c=r_0=r_0'$ and $\cond(\Omega')=2c-1$. 

\label{BLNS}
\end{lemma}
\noindent Temporarily admitting Lemma \ref{BLNS}, we can give:
\begin{proof}[Proof of Lemma \ref{LocalEst5} (2)]
	For any $t_0 \in \vO_r^{\times}$, we have $\iota_r'(t_0) \in \gp{K}$. Hence $\Proj_c(\hat{v}_r')$, calculated by
	$$ \Proj_c(\hat{v}_r') = \int_{\gp{K}(\vp^c)} \pi(\kappa).\hat{v}_r' \tilde{d}\kappa $$
with normalized Haar measure ${\rm Vol}(\gp{K}(\vp^c), \tilde{d}\kappa)=1$, satisfies
\begin{align*}
	\pi(\iota_r'(t_0)).\Proj_c(\hat{v}_r') &= \int_{\gp{K}(\vp^c)} \pi(\iota_r'(t_0) \kappa \iota_r'(t_0)^{-1}).\pi(\iota_r'(t_0)).\hat{v}_r' \tilde{d}\kappa \\
	&= \Omega^{-1}(t_0) \int_{\gp{K}(\vp^c)} \pi(\kappa).\hat{v}_r' \tilde{d}\kappa = \Omega^{-1}(t_0) \Proj_c(\hat{v}_r'),
\end{align*}
Hence $\Proj_c(\hat{v}_r')$ is a vector in $[\pi;c]$ which lies in the $\Omega^{-1}$-isotypic subspace under the action of $\iota_r'(\vO_r^{\times})$. But in our case we have an isomorphism of $\gp{K}$-representations
	$$ [\pi; c] \simeq \Ind_{\gp{K}_0(\vp^c)}^{\gp{K}} (\mu_1,\mu_2). $$
Applying Lemma \ref{BLNS}, we see that $\Proj_c(\hat{f}_r')$ lies in $\mathcal{S}_1(c)$ with
	$$ \Proj_c(\hat{f}_r')(\kappa \iota_r'(t_0)) = (\mu_1,\mu_2)(\kappa) \Omega^{-1}(t_0) \Proj_c(\hat{f}_r')(1), \quad \forall \kappa \in \gp{K}_0(\vp^c), t_0 \in \vO_r^{\times}. $$
Note for any $\kappa = \begin{pmatrix} x_1 & x_2 \\ x_3  & x_4 \end{pmatrix} \in \gp{K}(\vp^c)$, we have
	$$  \kappa = \begin{pmatrix} x_1 & x_2 \\ x_3 & x_4 \end{pmatrix} \in \begin{pmatrix} \det \kappa \cdot \Nr(x_4-\fa'^{-1}\varpi^r x_3\beta')^{-1} & \vo \\ 0 & 1 \end{pmatrix} \iota_r'(x_4-\fa'^{-1}\varpi^r x_3\beta'), $$
and $\det \kappa \in 1+\vp^c, x_4-\fa'^{-1}\varpi^r x_3\beta' \in 1+\vp^c+\vp^{r+c}\beta' \subseteq (1+\vp^c)(1+\varpi^{c+r}\vO)$ hence $\Nr(x_4-\fa'^{-1}\varpi^r x_3\beta') \in 1+\vp^c$. We deduce that $\hat{f}_r'(\kappa)=1, \forall \kappa \in \gp{K}(\vp^c)$. Therefore
	$$ \Proj_c(\hat{f}_r')(1)=1. $$
	
	First suppose $\pi$ is not special. If $\pi$ is in the principal unitary series, then the norm of a function $f\in \pi$ is calculated by
	$$ \int_{\gp{B} \backslash \GL_2} |f(g)|^2 dg. $$
Otherwise, the norm is calculated via the intertwining operator $\Intw$
	$$ \Intw f(g) = \int_{\F} f(wn(x)g) dx, $$
as some constant multiple (depending only on $\mu_1,\mu_2$) of
	$$ \int_{\gp{B} \backslash \GL_2} f(g) \overline{\Intw f(g)} dg, $$
In the non principal series case, if $\tilde{\hat{f}}_r' \in \tilde{\pi} \simeq \pi(\mu_2,\mu_1)$ is the function defined by
	$$ \tilde{\hat{f}}_r'\left( \begin{pmatrix} a & * \\ 0 & d \end{pmatrix} \iota_r'(t) \right) = \left\lvert \frac{a}{d} \right\rvert^{1/2} \mu_2(a) \mu_1(d) \Omega^{-1}(t), \quad \forall a,d\in \F^{\times}, t \in \E^{\times}, $$
which is obvious a continuous section in $\mu_1,\mu_2$, then by the uniqueness of $\hat{v}$, we have
	$$ \Intw (\hat{f}_r') = \Intw (\hat{f}_r')(1) \tilde{\hat{f}}_r'. $$
Since $\Proj_c$ commutes with $\Intw $, we get
	$$ \Intw (\Proj_c(\hat{f}_r')) = \Intw (\hat{f}_r')(1) \Proj_c(\tilde{\hat{f}}_r'), $$
where $\Proj_c(\tilde{\hat{f}}_r')$ is supported in $\gp{K}_0(\vp^c)\iota_r'(\vO_r^{\times})$ and
	$$ \Proj_c(\tilde{\hat{f}}_r')(\kappa \iota_r'(t_0)) = (\mu_2,\mu_1)(\kappa) \Omega^{-1}(t_0), \quad \forall \kappa \in \gp{K}_0(\vp^c), t_0 \in \vO_r^{\times} $$
by the same argument as above. We deduce that, in any case
	$$ \frac{\lVert \Proj_c(\hat{f}_r') \rVert^2}{\lVert \hat{f}_r' \rVert^2} = \frac{{\rm Vol}(\gp{K}_0(\vp^c)\iota_r'(\vO_r^{\times}), d\kappa)}{C {\rm Vol}(\gp{Z} \backslash \gp{T}_r, dt)}, $$
where $dt$ is a the standard Tamagawa measure on $\gp{Z} \backslash \gp{T}_r \simeq \F^{\times} \backslash \E^{\times}$, $\gp{T}_r$ being the image of $\iota_r'$. By Proposition \ref{MeasCompar}, the constant $C > 0$ is such that
	$$ C db dt = \zeta_{\vp}(2) dg = db d\kappa, $$
where $dg$ is the Tamagawa measure on $\GL_2(\F)$, $db$ is the Tamagawa left invariant measure on $\gp{B}$. Like in the proof of Lemma \ref{IwaTorHaar}, from
	$$ \begin{pmatrix} y & x \\ & 1 \end{pmatrix} \begin{pmatrix} s+\fb' u & \varpi^r u \\ - \fa' \varpi^{-r} u & s \end{pmatrix} = \begin{pmatrix} ys+(\fb'y - \fa' \varpi^{-r} u) & \varpi^r u y + s x \\ -\fa' \varpi^{-r} u & s \end{pmatrix} $$
	and the computation of the relevant Jacobian, we deduce that
	$$ C/\zeta_{\vp}(2) = q^r / (\zeta_{\vp}(1) \zeta_{\E,\vp}(1)), $$
which concludes the proof in this case.
	
	Finally suppose $\pi = \mathrm{St}_{\mu}$. We want the above argument for complementary series to ``pass to limit'' using Lemma \ref{InnPSt}. It suffices to check that the continuous section $\hat{f}_r'(\sigma) \in \pi_{\sigma} := \mu \otimes \pi(\norm^{\sigma}, \norm^{-\sigma})$
	$$ \hat{f}_r'\left(\sigma, \begin{pmatrix} a & * \\ 0 & d \end{pmatrix} \iota_r'(t) \right) = \left\lvert \frac{a}{d} \right\rvert^{1/2+\sigma} \mu(ad) \Omega^{-1}(t), \quad \forall a,d\in \F^{\times}, t \in \E^{\times} $$
is admissible in the sense of Definiton \ref{AdmS}. Since $c \geq \cond(\mu)$, $\mu \circ \det$ is trivial on $\gp{K}(\vp^c)$. Hence it suffices to check that $\Proj_c(\hat{f}_r'(\sigma))$ is admissible. Decomposing the support of $\Proj_c(\hat{f}_r'(\sigma))$ into $\iota_r'(\vO_r^{\times})$ right-cosets, it suffices to check
	$$ \int_{\vO_r^{\times}} \Omega(t) \mu(\Nr_{\F}^{\E} t)^{-1} dt = 0. $$
	Since $\vO_r^{\times} = \vo^{\times}(1+\varpi^r \vO) \cap \vO^{\times}$, and if $\E/\F$ is unramified then $r=\cond(\Omega)-c < \cond(\Omega)$, while if $\E/\F$ is ramified then $2r \leq 2(r_0-1) \leq \cond(\Omega) - 1$, it suffices to show that $\cond(\Omega \cdot (\mu \circ \Nr)^{-1}) = \cond(\Omega)$, or $\cond(\mu \circ \Nr) \leq \cond(\Omega)-1$. But if $\E/\F$ is unramified, then $r = \cond(\Omega)-c \geq 1$ and for $k \geq 1$, $\Nr(1+\varpi^k \vO) \subset 1+\varpi^k \vo$, implying $\cond(\mu \circ \Nr) \leq \cond(\mu) \leq c < \cond(\Omega)$; while if $\E/\F$ is ramified, then $\Nr(1+\varpi^{k-1}\varpi_{\E}\vO) \subset 1+\varpi^k \vo$, implying $\cond(\mu \circ \Nr) \leq \max(2\cond(\mu)-1,0) \leq 2c-1 < \cond(\Omega)$, we are done.
\end{proof}

	It remains to prove Lemma \ref{BLNS}. To this end, we need to use two relevant characters. Recall (\ref{vOr*}) and (\ref{vO0*}). One character is
\begin{equation}
	\tilde{\Omega}_c: \vO_c^{\times} \to \C^{\times}, xy \mapsto \omega(x), \quad \forall x\in \vo^{\times}, y \in 1+\varpi^c \vO.
\label{BaseChar}
\end{equation}
	It is well-defined because $\omega=\mu_1\mu_2$ is trivial on $1+\vp^c = \vo^{\times} \cap (1+\varpi^c\vO)$. The other one, which makes sense only in the case $\E/\F$ is ramified, is defined by
\begin{equation}
	\tilde{\Omega}_c': \vO_{c-1}^{\times} \to \C^{\times}, xy \mapsto \omega(x), \quad \forall x\in \vo^{\times}, y \in 1+ \varpi_{\E}^{2c-1} \vO.
\label{BaseCharRam}
\end{equation}
	It is well-defined because $1+\vp^c = \vo^{\times} \cap (1+\varpi_{\E}^{2c-1}\vO)$ and for $x,y\in \vo$ we have
	$$ 1+\varpi^{c-1}(x+y\beta') = 1+\varpi^{c-1}(x+yu_0) + \varpi^{c-1}(\beta'-u_0)y \in 1+\vp^{c-1} + \varpi^{c-1}\varpi_{\E} \vO, $$
	$$ \Rightarrow 1+\varpi^{c-1}\vO = (1+\varpi^{c-1} \varpi_{\E}\vO)(1+\varpi^{c-1}\vo). $$
\begin{proof}[Proof of Lemma \ref{BLNS}]
\noindent \textbf{Case 1:} First assume $r>0$. Recall (\ref{SppS1}).
\begin{itemize}
	\item[-] $\gp{K}_0(\vp^c) \cap \iota_r'(\vO_r^{\times}) = \iota_r'(\vO_{r+c}^{\times})$, and the restriction of the character $(\mu_1,\mu_2)$ of $\gp{K}_0(\vp^c)$ to $\iota_r'(\vO_{r+c}^{\times})$ corresponds to the character $\tilde{\Omega}_{r+c}$ of $\vO_{r+c}^{\times}$. We deduce that $\mathcal{S}_1(c)$ is isomorphic to $ \Ind_{\vO_{r+c}^{\times}}^{\vO_r^{\times}} \tilde{\Omega}_{r+c} $, hence contains each character $\Omega'$ of $\vO_r^{\times}$ restricting to $\omega$ on $\vo^{\times}$ with $r_0' \leq r+c$ once.
	\item[-] If $\kappa = \begin{pmatrix} c_1 & c_2 \\ c_3 & c_4 \end{pmatrix} \in \gp{K} - \gp{K}_0(\vp^c)\iota_r'(\vO_r^{\times}) $, then $c_4 \in \vp$. For any $x\in \vo^{\times}$ and $y \in \varpi^{r+c-1} \vo$, $\kappa \iota_r'(x+\beta' y) \kappa^{-1} $ is equal to
	$$ \begin{pmatrix} x+\frac{c_1c_4\fb'-c_2c_4\fa' \varpi^{-r} -c_1c_3\varpi^r}{\det \kappa}y & * \\ \frac{-c_4^2 \fa' \varpi^{-r} + c_3 c_4 \fb' - c_3^2 \varpi^r}{\det \kappa} y & x+\frac{-c_2c_3\fb'+c_2c_4\fa' \varpi^{-r} +c_1c_3\varpi^r}{\det \kappa}y \end{pmatrix} $$
which lies in $\gp{K}_0(\vp^c)$, and on which $(\mu_1,\mu_2)$ acts as $\tilde{\Omega}_{r+c-1}$. Hence $\mathcal{S}_{\kappa}(c)$ only contains characters $\Omega'$ of $\vO_r^{\times}$ restricting to $\omega$ on $\vo^{\times}$ with $r_0' \leq r+c-1$.
\end{itemize}

\noindent \textbf{Case 2:} Assume $r=0$. Recall (\ref{DecompK}).
\begin{itemize}
	\item[-] $\gp{K}_0(\vp^c) \cap \iota_0'(\vO^{\times}) = \iota_0'(\vO_c^{\times})$, and the restriction of the character $(\mu_1,\mu_2)$ of $\gp{K}_0(\vp^c)$ to $\iota_0'(\vO_c^{\times})$ corresponds to the character $\tilde{\Omega}_c$ of $\vO_c^{\times} $. We deduce that $\mathcal{S}_1(c)$ is isomorphic to $ \Ind_{\vO_c^{\times}}^{\vO^{\times}} \tilde{\Omega}_c. $, hence contains each character $\Omega'$ of $\vO^{\times}$ restricting to $\omega$ on $\vo^{\times}$ with $r_0' \leq c$ once.
	\item[-] Assume $\E/\F$ is ramified. Let $u \equiv u_0 \pmod{\vp}$. For any $x+y\beta' \in \vO_{c-1}^{\times}$ we have
\begin{equation}
	n_-(u) \iota_0(x+y\beta') n_-(-u) = \begin{pmatrix} x+y(\fb' -u) & y \\ -(u^2-\fb' u+\fa')y & x+uy \end{pmatrix} \in \gp{K}_0(\vp^c),
\label{Conjn_u_0}
\end{equation}
on which the character $(\mu_1,\mu_2)$ acts as $\tilde{\Omega}_c'$. In fact, $x+y\beta' \in 1+\varpi_{\E}^{2c-1}\vO$ if and only if we can write $x=1+\varpi^{c-1}x', y=\varpi^{c-1}y' $ for some $x',y' \in \vo, x'+u_0 y' \in \vp$. Since $\fb'-2u_0 = \Tr(\beta'-u_0) \in \vp$, we get
	$$ x'+y'(\fb' -u) \equiv x'+uy' \equiv x'+u_0y' \equiv 0 \pmod{\vp}. $$
	Consequently, we have
\begin{align*}
	&\quad \mu_1(x+y(\fb' -u)) \cdot \mu_2(x+u y) \\
	&= \mu_1(1+\varpi^{c-1}(x'+ y' (\fb'-u))) \cdot \mu_2(1+\varpi^{c-1}(x'+uy')) = 1.
\end{align*}
	Hence $\mathcal{S}_{n_-(u)}(c)$ contain only characters $\Omega'$ of $\vO^{\times}$ restricting to $\omega$ on $\vo^{\times}$ and trivial on  $1+ \varpi_{\E}^{2c-1} \vO$.
\end{itemize}
	We have thus completed the verification of all assertions.
\end{proof}

	\subsection{$\B$-Split, Finite Place, $\E$-nonsplit, $\pi$ supercuspidal}
	
\begin{lemma}
	Let $e=e(\E/\F)$ be the ramification index and $r_0 = \IntCP{\cond(\Omega)/e(\E/\F)}$ as before. Take any $c \geq \max(\IntP{\cond(\pi)/2} + e - 1, \IntCP{\cond(\pi)/2} )$. (In particular, $c = \cond(\pi)$ is admissible since $\cond(\pi) \geq 2$ for any supercuspidal $\pi$.)
\begin{itemize}
	\item[(1)] Assume $r_0 \leq c-e+1$. Then we have $\hat{v}_0 = \hat{v}_{0,\Omega} \in [\pi; c]$, i.e., $\Proj_c(\hat{v}_0) = \hat{v}_0$. Consequently, by (\ref{NSPFRAverage})
	$$ \frac{\alpha([\pi;c]; \Omega, \iota_0)}{\Vol(\gp{Z} \backslash \gp{T})} = 1. $$	
	\item[(2)] Assume $r_0 > c-e+1$ (in particular, $r_0 \geq c$). We have
	$$ \lVert \Proj_c(\hat{v}_r) \rVert^2 \leq \lVert \hat{v}_r \rVert^2 q^{c-r_0} \cdot \left\{ \begin{matrix} (1+q^{-1})^{-1} & \text{if } \E/\F \text{ unramified;} \\ 1/2 & \text{if } \E/\F \text{ ramified.} \end{matrix} \right. $$
Equality holds if and only if $r=r_0-c+e-1$. Consequently, by (\ref{NSPFRAverage}),
	$$ \frac{\alpha([\pi;c]; \Omega, \iota_r)}{\Vol(\gp{Z} \backslash \gp{T})} \leq q^{c-r_0} \cdot \left\{ \begin{matrix} (1+q^{-1})^{-1} & \text{if }\E/\F \text{ unramified;} \\ 1/2 & \text{if } \E/\F \text{ ramified.} \end{matrix} \right. $$
Equality holds if and only if $r=r_0-c+e-1$.
\end{itemize}
\label{LocalEst6}
\end{lemma}

	It will be convenient to write for each character $\Omega$ of $\E^{\times}$
\begin{equation}
	\tilde{\cond}(\Omega) = \IntCP{\cond(\Omega)/e(\E/\F)}.
\label{CondVar}
\end{equation}
	
	First note that if $\pi$ is not minimal supercuspidal, then there is a minimal supercuspidal $\vartheta$ and a character $\chi$ of $\F^{\times}$, such that
	$$ \pi \simeq \vartheta \otimes (\chi \circ \det ), \cond(\pi)=2\cond(\chi) > \cond(\vartheta). $$
For $c > \cond(\pi)/2=\cond(\chi)>\cond(\vartheta)/2$, $\chi$ is trivial on $\gp{K}(\vp^c)$. We must have
	$$ [\pi;c] = [\vartheta;c] \otimes (\chi \circ \det ). $$
It is also easy to see
	$$ \hat{v}_{r,\Omega} = \hat{v}_{r, \Omega \chi} \otimes 1, $$
where $\hat{v}_{r, \Omega \chi}$ lies in $\vartheta$ and we have identified $\chi$ with $\chi \circ \Nr_{\F}^{\E}$. Since $c > \cond(\chi) \geq \tilde{\cond}(\chi \circ \Nr_{\F}^{\E})$, we have
	$$ \tilde{\cond}(\Omega) \geq c \Leftrightarrow \tilde{\cond}(\Omega \chi) \geq c; \quad \tilde{\cond}(\Omega) > c \Leftrightarrow \tilde{\cond}(\Omega \chi) > c. $$
Thus, Lemma \ref{LocalEst6} for $\pi$ is a consequence of it for $\vartheta$.

	For various types of minimal supercuspidals, the proofs of Lemma \ref{LocalEst6} are similar. The case of Type 0 is much simpler than the other cases. We treat the other cases in detail and will comment on Type 0 afterwards.
At this point, we recommend the reader to review our notations for the construction of such representations in \S \ref{T1MS}-\ref{T3MS}, such as $m, \bL, \gp{J}, \gp{J}^0, \lambda, \rho$, etc.

\begin{lemma}
	Let $e'=e(\bL/\F)$ be the ramification index. If there is some non zero function $f \in [\pi;c]$, then we must have $c \geq \IntCP{\cond(\pi)/2}$ and the support of $f$ is contained in
	$$ \B^{(\leq c-\IntP{\cond(\pi)/2})} := \bigsqcup_{l=e'-1}^{c-\IntP{\cond(\pi)/2}} \gp{J} \begin{pmatrix} 1 & \\ & \varpi^l \end{pmatrix} \gp{K}. $$
Reciprocally, any function supported in the above set is $\gp{K}(\vp^c)$-invariant. Consequently, the projector $\Proj_c$ is given by
	$$ f \mapsto f 1_{\B^{(\leq c-\IntP{\cond(\pi)/2})}}. $$
The projection from $\pi$ to $\sigma_0=\sigma_0(\pi)$ is given by
	$$ f \mapsto f 1_{\B^{(\IntCP{\cond(\pi)/2})}}, \quad \B^{(m)} := \B^{(\leq m)} - \B^{(\leq m-1)}. $$
\label{InvSpace1}
\end{lemma}
\begin{proof}
	If we have some $\gp{K}(\vp^c)$-invariant $f \neq 0$, there must be some $\gp{J} a_-(\varpi^l) \gp{K}$ with $l \geq e'-1$ on which $f \neq 0$, in view of Proposition \ref{VarCartan} (1) \& (2). We may assume $ f(a(\varpi^l) \kappa_0) \neq 0$ for some $\kappa_0 \in \gp{K}$. Take
	$$ y \in \vp^{c-l} \cap \left\{ \begin{matrix} \vp^{m+1} & \text{if } \pi \text{ is of Type 1 with level } 2m+1 \\ \vp^m & \text{if } \pi \text{ is of Type 2 with level } 2m \\ \vp^{\IntCP{m/2}} & \text{if } \pi \text{ is of Type 3 with level } m+1/2. \end{matrix} \right. $$
	Then we have
\begin{align*}
	f(a_-(\varpi^l) \kappa_0) &= f(a_-(\varpi^l) \kappa_0 \kappa_0^{-1} n(\varpi^l y) \kappa_0) \quad (\text{ since } \kappa_0^{-1} n(\varpi^l y) \kappa_0 \in \gp{K}(\vp^c)) \\
	&= f(n(y) a_-(\varpi^l) \kappa_0) = \begin{matrix} \lambda(n(y)) \\ \rho(n(y)) \end{matrix} f(a_-(\varpi^l) \kappa_0) \quad (\text{ since } n(y) \in \gp{J}),
\end{align*}
	i.e., $0 \neq f(a_-(\varpi^l) \kappa_0)$ is an invariant vector in $V_{\lambda}$ (Type 1 and 3) or $V_{\rho}$ (Type 2) for $n(y)$. Such a vector does not exist if $y$ can run over $\vp^{2m+1}$ (Type 1) resp. $\vp^{2m}$ (Type 2) resp. $\vp^m$ (Type 3) according to our recall in \S \ref{T1MS}, \S \ref{T2MS} and \S \ref{T3MS} on the branching law of $\lambda$ restricted to $n(\vp^{\IntP{\cond(\pi)/2}-1})$. Hence if $c-l \leq \IntP{\cond(\pi)/2}-1$, we would get a contradiction. Thus we necessarily have $c-l \geq \IntP{\cond(\pi)/2}$, i.e., $c \geq \IntP{\cond(\pi)/2}+e'-1 = \IntCP{\cond(\pi)/2}$ and $l \leq c-\IntP{\cond(\pi)/2}$.

\noindent Assume $c-l \geq \IntP{\cond(\pi)/2}$. The functions with support contained in $\B^{(\leq c-\IntP{\cond(\pi)/2})}$ are $\gp{K}(\vp^c)$-invariant because for any $\kappa \in \gp{K}$
	$$ a_-(\varpi^l) \kappa \gp{K}(\vp^c) \kappa^{-1} a_-(\varpi^{-l}) \subset \gp{K}(\vp^{c-l}) \subset \gp{K}(\vp^{\IntP{\cond(\pi)/2}}), $$
	on which $\lambda$ (Type 1 and 3) or $\rho$ (Type 2) is trivial. (In the case of Type 2, $\rho \mid_{\gp{H}^1}$ is a repeated copy of $\lambda$ trivial on the above group.)

\noindent The assertion concerning $\sigma_0$ follows from $\pi^{\gp{K}(\vp^{\cond(\pi)-1})} \oplus \sigma_0 = \pi^{\gp{K}(\vp^{\cond(\pi)})}$, c.f. \cite[Theorem 5.2]{Wu14}.
\end{proof}

\begin{lemma}
	Recall (\ref{CondVar}) and $e_u$ defined in \S \ref{T2MS}. Let $s=\max( \tilde{\cond}(\Omega)-\IntP{\cond(\pi)/2},0)-1+e(\E/\F)$ and $\hat{v}_0$ correspond to a function $\hat{f}_0$. Up to a constant multiple, we have:
\begin{itemize}
	\item[-] If $s \geq e(\E/\F)$, then there is a unique $z\in \vo^{\times} / (1+\vp^{\IntP{(\cond(\pi)+1)/4}})$ such that $\hat{f}_0$ is supported in $\gp{J} a_-(z\varpi^s) \gp{T}_0$ and for some unique $u \in 1+\vp^{\IntP{(\cond(\pi)+1)/4}} / 1+\vp^{\IntP{(\cond(\pi)+1)/4}+1}$ in the Type 2 case
	$$ \hat{f}_0(xa_-(z\varpi^s)\iota_0(t)) = \left\{ \begin{matrix} \lambda(x)\Omega^{-1}(t) & \text{Type 1 \& 3} \\ \Omega^{-1}(t)\rho(x).e_u & \text{Type 2} \end{matrix} \right., \quad \forall x\in \gp{J}, t\in \E^{\times}. $$
	\item[-] If $s=e(\E/\F)-1$, then there is a unique double coset $\gp{J}^0 \kappa_0 \iota_s(\vO^{\times}) \in \gp{J}^0 \backslash \gp{K} /\iota_s(\vO^{\times})$ (Type 1 \& 2) or $\in \gp{J}^0 \backslash \gp{K}_0(\vp) /\iota_s(\vO^{\times})$ (Type 3) such that $\hat{f}_0$ is supported in $\gp{J} \kappa_0 a_-(\varpi^s) \gp{T}_0$ and for some unique $u \in 1+\vp^{\IntP{(\cond(\pi)+1)/4}} / 1+\vp^{\IntP{(\cond(\pi)+1)/4}+1}$ in the Type 2 case
	$$ \hat{f}_0(x\kappa_0a_-(\varpi^s)\iota_0(t)) = \left\{ \begin{matrix} \lambda(x)\Omega^{-1}(t) & \text{Type 1 \& 3} \\ \Omega^{-1}(t)\rho(x).e_u & \text{Type 2} \end{matrix} \right., \quad \forall x\in \gp{J}, t\in \E^{\times}. $$
\end{itemize}
\label{IdOmega1}
\end{lemma}
\begin{proof}
	We prove case by case. First consider the Type 1 case. Proposition \ref{VarCartan} and Lemma \ref{KdecompJ^0} yield the following decomposition
	$$ \GL_2 = \left\{ \begin{matrix} \sideset{}{_{\kappa_0}} \bigsqcup \gp{J} \kappa_0 \gp{T}_0 \bigsqcup \sideset{}{_{l \geq 1}} \bigsqcup \sideset{}{_z} \bigsqcup \gp{J} a_-(z\varpi^l) \gp{T}_0 & \text{if } \E/\F \text{ is unramified,} \\ \sideset{}{_{\kappa_0}} \bigsqcup \gp{J} \kappa_0'a_-(\varpi) \gp{T}_0 \bigsqcup \sideset{}{_{l \geq 2}} \bigsqcup \sideset{}{_z} \bigsqcup \gp{J} a_-(z\varpi^l) \gp{T}_0 & \text{if } \E/\F \text{ is ramified,} \end{matrix} \right. $$
where $\kappa_0$ resp. $\kappa_0'$ runs over a set of representatives of $\gp{J}^0 \backslash \gp{K} / \iota_0(\vO^{\times})$ resp. $\gp{J}^0 \backslash \gp{K} / \iota_1(\vO^{\times})$ and $z$ runs over a set of representatives of $\vo^{\times} / (1+\vp^{m+1})$. Let $\mathcal{S}(l,z)$ be the space of functions supported in $\gp{J} a_-(z\varpi^l) \gp{T}_0$ if $l \geq e(\E/\F)$; let $\mathcal{S}(\kappa_0)$ resp. $\mathcal{S}(\kappa_0')$ be the space of functions supported in $\gp{J} \kappa_0 \gp{T}_0$ resp. $\gp{J} \kappa_0' a_-(\varpi) \gp{T}_0$. Then we have a decomposition as $\gp{T}_0$-representations
	$$ \Res_{\gp{T}_0}^{\GL_2} \pi = \left\{ \begin{matrix} \sideset{}{_{\kappa_0}} \bigoplus \mathcal{S}(\kappa_0) \bigoplus \sideset{}{_{l \geq 1,z}} \bigoplus \mathcal{S}(l,z) & \text{if } \E/\F \text{ is unramified,} \\ \sideset{}{_{\kappa_0'}} \bigoplus \mathcal{S}(\kappa_0') \bigoplus \sideset{}{_{l \geq 2,z}} \bigoplus \mathcal{S}(l,z) & \text{if } \E/\F \text{ is ramified.} \end{matrix} \right. $$
	
\noindent Consider the case $l \geq e(\E/\F)$ and write $\tilde{l} := l + 1 - e(\E/\F) = l - v(\fa)$. The proof of Lemma \ref{KdecompJ^0} shows (with $z_1=z_2=z$) that
\begin{align*}
	&\quad a_-(z\varpi^l) \gp{T}_0 a_-(z\varpi^l)^{-1} \cap \gp{J} = \gp{Z} \left( a_-(z) \iota_l(\vO^{\times}) a_-(z)^{-1} \cap \gp{K} \cap \gp{J}^0 \right) \\
	&= \gp{Z} \left( a_-(z) \iota_l(\vO_{\tilde{l}}^{\times}) a_-(z)^{-1} \cap \gp{J}^0 \right) = \gp{Z} \left( a_-(z) \iota_l(\vO_{\tilde{l}+m+1}^{\times}) a_-(z)^{-1} \right),
\end{align*}
on which $\lambda$ acts as a character $\Omega_z$ of $\F^{\times}\vO_{\tilde{l}+m+1}^{\times} = \F^{\times}(1+\vp^{\tilde{l}+m+1}\beta)$ defined by
	$$ \Omega_z(x(1+y\beta)) = \omega(x) \tilde{\psi}(\varpi^{-l-2m-1}y(-\fa_0 \fa z^{-1} + \varpi^{2l} z + \fa_1 \fb \varpi^l)), \quad x\in \F^{\times}, y\in \vp^{\tilde{l}+m+1}. $$
The function
	$$ \vo^{\times} / (1+\vp^{m+1}) \to \vo^{\times} / (1+\vp^{m+1}), \quad z \mapsto \varpi^{-v(\fa)} (-\fa_0 \fa z^{-1} + \varpi^{2l} z + \fa_1 \fb \varpi^l)) $$
is injective, since $ - \fa_0 \fa z_1^{-1} + \varpi^{2l} z_1 = - \fa_0 \fa z_2^{-1} + \varpi^{2l} z_2 \pmod{\vp^{m+1+v(\fa)}} $ if and only if $(z_1 - z_2)(\fa_0 \fa z_1^{-1}z_2^{-1}+ \varpi^{2l}) \in \vp^{m+1+v(\fa)}$ if and only if $z_1 = z_2 \pmod{\vp^{m+1}}$. It is thus bijective since $\vo^{\times} / (1+\vp^{m+1})$ is finite. As $z$ runs over $\vo^{\times} / (1+\vp^{m+1})$, the function
	$$ \vp^{\tilde{l}+m+1}/\vp^{\tilde{l}+2m+2} \to \C^{\times}, \quad y \mapsto \tilde{\psi}(\varpi^{-l-2m-1}y(-\fa_0 \fa z^{-1} + \varpi^{2l} z + \fa_1 \fb \varpi^l)) $$
runs over all the characters of $\vp^{\tilde{l}+m+1}/\vp^{\tilde{l}+2m+2}$ not trivial on $\vp^{\tilde{l}+2m+1}/\vp^{\tilde{l}+2m+2}$. Hence $\Omega_z$ runs over all the characters of $\F^{\times}\vO_{\tilde{l}+m+1}^{\times}$ restricting to $\omega$ on $\F^{\times}$ with $\tilde{\cond}(\Omega_z)=\tilde{l}+2m+2$. As $\mathcal{S}(l,z) \simeq \Ind_{\F^{\times}\vO_{\tilde{l}+m+1}^{\times}}^{\E^{\times}} \Omega_z$ which is the direct sum of all characters $\Omega'$ of $\E^{\times}$ restricting to $\Omega_z$ on $\F^{\times}\vO_{\tilde{l}+m+1}^{\times}$, we see that
	$$ \sideset{}{_z} \bigoplus \mathcal{S}(l,z) $$
contains all characters $\Omega'$ of $\E^{\times}$ with $\tilde{\cond}(\Omega') = \tilde{l}+2m+2$ with multiplicity one. We conclude by the multiplicity one result of Waldspurger \cite{Wa85}.

	Now we turn to Type 2 case. After obtaining
	$$ a_-(z\varpi^l) \gp{T}_0 a_-(z\varpi^l)^{-1} \cap \gp{J} = \gp{Z} \left( a(z) \iota_l(\vO_{\tilde{l}+m}^{\times}) a(z)^{-1} \right), $$
	we must argue more carefully as follows. Taking $y \in \vp^{\tilde{l}+m}$, we get
	$$ a_-(z) \iota_l(1+y\beta) a_-(z)^{-1} = \begin{pmatrix} 1 & -\fa z^{-1} y \varpi^{-l} \\ zy\varpi^l & 1+y\fb \end{pmatrix} = \begin{pmatrix} 1 & 0 \\ zy\varpi^l & 1+y\fb + \fa y^2 \end{pmatrix} n(-\fa z^{-1} y \varpi^{-l}). $$
	The first matrix lies in $\gp{K}(\vp^{m+1})$, on which $\rho = \eta = \lambda^{\oplus q}$ acts as multiplication by 
	$$ \tilde{\psi}(\varpi^{-2m} (\fa_1 \fb + \varpi^l z )y). $$
	(\ref{BLeta}) shows that the action of $\rho$ through the second matrix decomposes as the direct sum of
	$$ \tilde{\psi}(-\varpi^{-l-2m} u z^{-1} \fa_0 \fa y), \quad u \in 1+\vp^m / 1+\vp^{m+1}. $$
	It follows that
	$$ \mathcal{S}(s,z) = \sideset{}{_{u \in 1+\vp^m / 1+\vp^{m+1}}} \bigoplus \Ind_{\F^{\times} \vO_{\tilde{l}+m}^{\times}}^{\E^{\times}} \Omega_{z,u}, $$
	where $\Omega_{z,u}$ is a character of $\F^{\times} \vO_{s+m}^{\times}$ defined by
	$$ \Omega_{z,u}(x(1+y\beta)) = \omega(x) \tilde{\psi}(\varpi^{-l-2m}y(- \fa_0 \fa u z^{-1} + \varpi^{2l}z + \fa_1 \fb \varpi^l)), \quad x \in \F^{\times}, y \in \vp^{\tilde{l}+m}. $$
	We observe that replacing $\varpi^{2l} z$ with $\varpi^{2l} u^{-1}z$ does not change the above equation, and that as $z,u$ vary, $z' := u^{-1}z$ runs over $\vo^{\times}/(1+\vp^{m+1})$. Hence
	$$ \sideset{}{_z} \bigoplus \mathcal{S}(l,z) = \sideset{}{_{z' \in \vo^{\times}/(1+\vp^{m+1})}} \bigoplus \Ind_{\F^{\times} \vO_{\tilde{l}+m}^{\times}}^{\E^{\times}} \Omega_{z'}, $$
	$$ \Omega_{z'}(x(1+y\beta)) = \omega(x) \tilde{\psi}(\varpi^{-l-2m}y(- \fa_0 \fa z'^{-1} + \varpi^{2l}z' + \fa_1 \fb \varpi^l)), \quad x \in \F^{\times}, y \in \vp^{\tilde{l}+m}. $$
	This is the same type of decomposition as in Type 1 case, hence the rest of the argument goes through in the same way.
	
	Finally we treat Type 3 case. By considering the valuation of the determinant, we get
\begin{align*}
	&\quad a_-(z\varpi^l) \gp{T}_0 a_-(z\varpi^l)^{-1} \cap \gp{J} \\
	&= \left\{ \begin{matrix} \gp{Z} \left( a_-(z) \iota_l(\vO^{\times}) a_-(z)^{-1} \cap \gp{J}^0 \right) & \E/\F \text{ unramified,} \\ \gp{Z} \left( a_-(z) \iota_l(\vO^{\times}) a_-(z)^{-1} \cap \gp{J}^0 \right) \bigsqcup \gp{Z} \left( a_-(z) \iota_l(\beta \vO^{\times}) a_-(z)^{-1} \cap \alpha \gp{J}^0 \right) & \E/\F \text{ ramified.} \end{matrix} \right.
\end{align*}
	If the second component in the $\E/\F$ ramified case is non empty, i.e., if
\begin{equation}
	\exists \gamma \in a_-(z) \iota_l(\beta \vO^{\times}) a_-(z)^{-1} \cap \alpha \gp{J}^0,
\label{RRNExt}
\end{equation}
then we deduce that
	$$ a_-(z) \iota_l(\beta \vO^{\times}) a_-(z)^{-1} \cap \alpha \gp{J}^0 = \gamma \left( a_-(z) \iota_l(\vO^{\times}) a_-(z)^{-1} \cap \gp{J}^0 \right). $$
	Such $\gamma$ will give conditions on $\Omega(\beta)$ in the branching law and has no effect on the conductor. Thus in both cases, we only need to consider (by Lemma \ref{KdecompJ^0})
	$$ \gp{Z} \left( a_-(z) \iota_l(\vO^{\times}) a_-(z)^{-1} \cap \gp{J}^0 \right) = \gp{Z} a_-(z) \iota_l(\vO_{\tilde{l}+\IntCP{m/2}}^{\times}) a_-(z)^{-1}, $$
on which $\lambda$ acts as a character $\Omega_z$ of $\F^{\times}\vO_{\tilde{l}+\IntCP{m/2}}^{\times} = \F^{\times}(1+\vp^{\tilde{l}+\IntCP{m/2}}\beta)$ defined by
	$$ \Omega_z(x(1+y\beta)) = \omega(x) \tilde{\psi}(\varpi^{-l-m-1}y(-\fa_0 \fa z^{-1} + \varpi^{2l} z + \fa_1 \fb \varpi^l)), \quad x\in \F^{\times}, y\in \vp^{\tilde{l}+\IntCP{m/2}}. $$
	The rest of the argument is the same as in Type 1 case.
\end{proof}
\begin{remark}
	Moreover, (\ref{RRNExt}) is possible only if $l=1=e(\E/\F)-1$. In fact, we have
	$$ a_-(z) \iota_l(\beta \vO^{\times}) a_-(z)^{-1} = \left\{ \begin{pmatrix} x & - z^{-1} \fa \varpi^{-l} y \\ z \varpi^l y & x+by \end{pmatrix} \ |\ x \in \vp, y \in \vo^{\times} \right\}, $$
	$$ \alpha \gp{J}^0 \subset \oJ. $$
\label{RRRNExt}
\end{remark}

\begin{proof}[Proof of Lemma \ref{LocalEst6}]
	For the first part, by Lemma \ref{InvSpace1} and \ref{IdOmega1}, it suffices to show that
	$$ \begin{matrix} \text{if } \E/\F \text{ unramified} & \gp{J} a_-(\varpi^s) \gp{K}_{\oM} \\ \text{if } \E/\F \text{ ramified} & \gp{J} a_-(\varpi^s) \gp{K}_{\oJ} \end{matrix} \subset \B^{(\leq c-\IntP{\cond(\pi)/2})} = \sideset{}{_{l \leq c-\IntP{\cond(\pi)/2}}} \bigsqcup \gp{J} a_-(\varpi^l) \gp{K} $$
for $s = \max( r_0-\IntP{\cond(\pi)/2},0)-1+e$, since the support of $\hat{f}_0$ is contained in the LHS. This is the case by the ``moreover'' part of Proposition \ref{VarCartan} (1) and
	$$ s \leq \max(c-e+1-\IntP{\cond(\pi)/2},0) - 1+e = c-\IntP{\cond(\pi)/2}. $$

	For the second part, let $s=\tilde{\cond}(\Omega)-\IntP{\cond(\pi)/2}+e-1 \geq e$, $\tilde{s} = s +1 -e = \tilde{\cond}(\Omega)-\IntP{\cond(\pi)/2}$. Note that $\hat{f}_0$ is of modulus $1$ on its support, hence the same is true for $\hat{f}_r = \pi(a_-(\varpi^r)).\hat{f}_0$ for any $r\in \N$. Since, by Lemma \ref{IdOmega1}, the support of $\hat{f}_0$ is $\gp{J} a(z\varpi^s) \gp{T}_0$ for some $z \in \vo^{\times}$, we get, by Corollary \ref{InvSpace1},
	$$ \lVert \hat{f}_r \rVert^2 = {\rm Vol}(\gp{J} \backslash \gp{J} a_-(z\varpi^s) \gp{T}_0 a_-(\varpi^{-r})) = {\rm Vol}(\gp{J} \backslash \gp{J} a_-(z\varpi^s) \gp{T}_0); $$
	$$ \lVert \Proj_c(\hat{f}_r) \rVert^2 = {\rm Vol} \left( \gp{J} \backslash \left( \gp{J} a_-(z\varpi^s) \gp{T}_0 a_-(\varpi^{-r}) \cap \B^{(\leq c-\IntP{\cond(\pi)/2})} \right) \right). $$
	As $\gp{J}$ is compact open modulo center, the relevant measure for the above volume function can be taken as the counting measure. But by the proof of Lemma \ref{IdOmega1} (and Remark \ref{RRRNExt}), the orbits of $\gp{J}$ on
	$$ \gp{J} a_-(z\varpi^s) \gp{T}_0 \quad \text{resp.} \quad \gp{J} a_-(z\varpi^s) \gp{T}_0 a_-(\varpi^{-r}) \cap \B^{(\leq c-\IntP{\cond(\pi)/2})} $$
	is in bijection with those of $\gp{T}_0 \cap a_-(z\varpi^s)^{-1} \gp{J} a_-(z\varpi^s) = \gp{Z}\iota_0(\vO_{\tilde{s}+\IntCP{(\cond(\pi)-3)/4}}^{\times})$ on
	$$ \gp{T}_0 \quad \text{resp.} \quad \gp{T}_0 \cap a_-(z\varpi^s)^{-1} \B^{(\leq c-\IntP{\cond(\pi)/2})} a_-(\varpi^r). $$
	Consequently, we obtain
	$$ \lVert \hat{f}_r \rVert^2 = \extnorm{ \F^{\times} \vO_{\tilde{s}+\IntCP{(\cond(\pi)-3)/4}}^{\times} \backslash \E^{\times} }. $$
	For $\lVert \Proj_c(\hat{f}_r) \rVert^2$, we need to compute
\begin{align*}
	&\quad \gp{T}_0 \cap a_-(z\varpi^s)^{-1} \B^{(\leq c-\IntP{\cond(\pi)/2})} a_-(\varpi^r) = \sideset{}{_{n \in \Z}} \bigsqcup \varpi^n \cdot \\
	& \left( \gp{T}_0 \cap a_-(z\varpi^s)^{-1} \begin{Bmatrix} \sideset{}{_{k \leq c-\IntP{\cond(\pi)/2}}} \bigsqcup \gp{K} a_-(\varpi^k) \gp{K} & \text{If } \pi \text{ is of Type 1 or 2} \\ \sideset{}{_{k \leq c-\IntP{\cond(\pi)/2}-1}} \bigsqcup \gp{K} a_-(\varpi^k) \gp{K} \bigsqcup \gp{K}(l) & \text{If } \pi \text{ is of Type 3}  \end{Bmatrix} a_-(\varpi^r) \right)
\end{align*}
	where $\gp{K}(l) = \gp{K}_0(\vp) a_-(\varpi^l) \gp{K} = \gp{K} a_-(\varpi^l) \gp{K} \cap \Pi \gp{K} a_-(\varpi^{l-1}) \gp{K}$ by the proof of Proposition \ref{VarCartan} (1) and Remark \ref{DCDes}. Still by Remark \ref{DCDes}, we can deduce that for any $l \geq 0$
\begin{align*}
	&\quad \gp{T}_0 \cap a_-(z\varpi^s)^{-1} \left( \sideset{}{_{k \leq l}} \bigsqcup \gp{K} a_-(\varpi^k) \gp{K} \right) a_-(\varpi^r) \\
	&= \left\{ \begin{matrix} \iota_0(\varpi^{r-s} \vO_{s-v(\fa)}^{(\leq l-r+s)}) & \text{if } r \geq s \\ \iota_0(\vO_{r-v(\fa)}^{(\leq l-s+r)}) & \text{if } r < s \end{matrix} \right. = \iota_0(\varpi^{\max(r-s,0)} \vO_{\min(r-v(\fa),s-v(\fa))}^{(\leq l - \norm[r-s])}),
\end{align*}
\begin{align*}
	 \gp{T}_0 \cap a_-(z\varpi^s)^{-1} \gp{K}(l) a_-(\varpi^r) &= \iota_0 \left( \varpi^{\max(r+1-s,0)} \vO_{\min(r-v(\fa),s-v(\fa)-1)}^{(l-1-\norm[r+1-s])} \cap \varpi^{\max(r-s,0)} \vO_{\min(r-v(\fa),s-v(\fa))}^{(l - \norm[r-s])} \right) \\
	 &= \left\{ \begin{matrix} \varpi^{r+1-s} \vO_{(\tilde{s}-1)}^{(l-2-r+s)} & \text{if } r \geq s \\ \vO_{r-v(\fa)}^{(l+r-s)} & \text{if } r < s, \end{matrix} \right.
\end{align*}
	where we have used the notations in (\ref{ArithQuadSet}). Thus we get in the cases of Type 1 and Type 2
	$$ \frac{\lVert \Proj_c(\hat{f}_r) \rVert^2}{\lVert \hat{f}_r \rVert^2} = \frac{\extnorm{ \vO_{\tilde{s}+\IntCP{(\cond(\pi)-3)/4}}^{\times} \backslash \vO_{\min(r-v(\fa),s-v(\fa))}^{(\leq c-\IntP{\cond(\pi)/2}-|r-s|)} }}{\extnorm{\vO_{\tilde{s}+\IntCP{(\cond(\pi)-3)/4}}^{\times} \backslash (\vO - \varpi \vO)}} = \frac{\extnorm{\vO_{\min(r-v(\fa),s-v(\fa))}^{\times} \backslash \vO_{\min(r-v(\fa),s-v(\fa))}^{(\leq c-\IntP{\cond(\pi)/2}-|r-s|)}}}{\extnorm{\vO_{\min(r-v(\fa),s-v(\fa))}^{\times} \backslash (\vO - \varpi \vO)}}; $$
	while in the case of Type 3
\begin{equation}
	\frac{\lVert \Proj_c(\hat{f}_r) \rVert^2}{\lVert \hat{f}_r \rVert^2} = \left\{ \begin{matrix} \frac{\extnorm{\vO_{s-v(\fa)}^{\times} \backslash \left( \vO_{s-v(\fa)}^{(\leq c-\IntP{\cond(\pi)/2}-1-|r-s|)} \sqcup \vO_{(s-v(\fa)-1)}^{(c-\IntP{\cond(\pi)/2}-2-|r-s|)} \right)} }{\extnorm{\vO_{s-v(\fa)}^{\times} \backslash (\vO - \varpi \vO)}} & \text{if } r \geq s \\ \frac{\extnorm{\vO_{r-v(\fa)}^{\times} \backslash \vO_{r-v(\fa)}^{(\leq c-\IntP{\cond(\pi)/2}-|r-s|)}}}{\extnorm{\vO_{r-v(\fa)}^{\times} \backslash (\vO - \varpi \vO)}} & \text{if } r < s \end{matrix}. \right.
\label{ArithNaturProj}
\end{equation}
	The formulas given in Corollary \ref{LIQuotient} show that the RHS in both cases is decreasing in $r$ once non-vanishing, hence attains its maximum when $c-\IntP{\cond(\pi)/2}-(s-r) = 0, s \geq r$, concluding the proof.

\end{proof}

\section{Proof of Main Theorem}

	\subsection{Reduction to Period Bound}
	
	We begin with recall on the set-up. We fix a number field $\F$ with ring of integers $\vo$, ring of adeles $\A=\A_{\F}$ and a cuspidal representation $\pi$ of $\GL_2(\A)$ with central character $\omega=\omega_{\pi}$. There is a finite set $Q(\pi)$ of quaternion algebras $\B$ defined over $\F$, such that the Jacquet-Langlands lift of $\pi$ exists on $\gp{G}_{\B}$, the $\F$-group of invertible elements of $\B$. Precisely, these are the quaternion algebras whose set of ramification places is included in the set of places $v$ such that $\pi_v$ is square integrable. For each $\B \in Q(\pi)$, we fix an $\vo$-maximal order $\VO = \VO_{\B}$. For each place $v$ at which $\B$ is not ramified, we fix an isomorphism of $\F_v$-algebras $\delta_v: \B(\F_v) \simeq \Mat_2(\F_v)$ such that if $v=\vp < \infty$ then $\delta_{\vp}(\VO_{\vp}) = \Mat_2(\vo_{\vp})$. Any two sets of choices of $\VO$ and $\delta_{\vp}$ are conjugate to each other by elements in $\GL_2(\vo_{\vp})$ at all but finitely many finite places $\vp$.
	
	Let $(\E,\Omega)$ range over pairs consisting of a quadratic field extension of $\E/\F$ and a Hecke character $\Omega$ of $\E$ such that
	$$ \Omega \mid_{\A^{\times}} = \omega^{-1}, \quad \varepsilon(1/2, \pi_{\E} \otimes \Omega) = 1. $$
	Each pair $(\E,\Omega)$ determines a unique quaternion algebra $\B \in Q(\pi)$ over $\F$, belonging to the triple $(\pi, \E, \Omega)$ (see Definition \ref{QABelong} and the discussion thereafter). We write for $\gp{G}=\gp{G}_{\B}$ the $\F$-group of invertible elements of $\B$ and denote by $\pi' = \otimes_v' \pi_v' = \JL(\pi)$ the Jacquet-Langlands lifting of $\pi$ on $\gp{G}(\A)$. $\B$ contains $\E$ via some $\F$-embedding $\iota: \E \to \B$. $\iota(\E^{\times})$ defines an $\F$-sub-torus $\gp{T}$ of $\gp{G}$. 
	
	We specify a finite dimensional subspace $\sigma_v \subset \pi_v'$ and a $T_v \in \gp{G}(\F_v)$ at each place $v$, according to
\begin{equation}
	 S_{\F} = S_0 \sqcup S_{\infty,s} \sqcup S_{\infty,i} \sqcup S_s \sqcup S_{n,n} \sqcup S_{n,s} \sqcup S_{ur}
\label{PlacesPartition}
\end{equation}	
a partition of the set $S_{\F}$ of places of $\F$ defined as follows:
\begin{itemize}	
	\item[(0)] $S_0$ consists of all places $v$ at which $\B$ is ramified. For $v \in S_0$, we take $\sigma_v = \pi_v'$ equal to the whole space and $T_v=1$.
	\item[(1)] $S_{\infty,s}$ consists of all archimedean places $v$ at which both $\B$ and $\E$ are split. For $v \in S_{\infty,s}$, we take $\sigma_v$ to be the one-dimensional subspace spanned by $v_0$ specified in Lemma \ref{LocalEst1} (1). From $r$ with $\Cond(\Omega_v)^{1-\epsilon} \ll \norm[r]_v \ll \Cond(\Omega_v)^{1+\epsilon}$ specified also in Lemma \ref{LocalEst1} (1) and $\iota_r$ defined in (\ref{ArchCarEmbS}), we take $T_v$ so that
\begin{equation}
	\delta_v \circ (T_v^{-1} \iota_v T_v) = \iota_r.
\label{TransSpec}
\end{equation}
	\item[(2)] $S_{\infty,i}$ consists of all archimedean places $v$ at which $\B$ is split and $\E$ is inert. Necessarily $v$ is real. For $v \in S_{\infty,i}$, we take $\sigma_v$ to be the one-dimensional subspace spanned by the new vector. From $r$ with $\norm[r]_v \asymp \Cond(\Omega_v)^{1/2}$ specified in Lemma \ref{LocalEst1} (2) and $\iota_r$ defined in (\ref{ArchCarEmbNS}), we take $T_v$ so that (\ref{TransSpec}) holds. We also sub-divide
	$$ S_{\infty,i} = S_{\infty,i,d} \sqcup S_{\infty,i,c}, $$
	where $S_{\infty,i,d}$ resp. $S_{\infty,i,c}$ is the set of places $v \in S_{\infty,i}$ such that $\pi_v$ is resp. is not a discrete series representation.
	\item[(3)] $S_s$ consists of all finite places $v=\vp$ at which $\E$ is split and $\pi_{\vp}$ is ramified. Necessarily $\B$ is also split. For $\vp \in S_s$, we take $\sigma_{\vp}$ to be the subspace of the $\gp{K}_{\vp}$-representation generated by the new vector. From $r$ with $r \asymp \cond(\Omega_{\vp})/2$ specified in Lemma \ref{LocalEst2} and $\iota_r$ defined in (\ref{CarEmbS}), we take $T_{\vp}$ so that (\ref{TransSpec}) holds.
	\item[(4)] $S_{n,n}$ consists of all finite places $v=\vp$ at which $\B$ is split, $\E$ is non-split, and $\pi_{\vp}$ is ramified non-supercuspidal. Necessarily there exist characters $\mu_1,\mu_2$ of $\F_{\vp}^{\times}$ such that $\pi_{\vp}$ is induced from them. For $\vp \in S_{n,n}$ we take $\sigma_{\vp} = [\pi_{\vp}';c] \simeq [\pi_{\vp};c]$ (c.f. Definition \ref{InvSpDef}) with $c = \max(\cond(\mu_1),\cond(\mu_2),1)) (\leq \cond(\pi_{\vp}))$. From $r = \max(\IntCP{\cond(\Omega_{\vp})/e(\E_{\vp}/\F_{\vp})} - c, 0)$ specified in Lemma \ref{LocalEst5} and $\iota_r'$ defined in Remark \ref{AltEmb}, we take $T_{\vp}$ so that
\begin{equation}
	\delta_{\vp} \circ (T_{\vp}^{-1} \iota_{\vp} T_{\vp}) = \iota_r'.
\label{TransSpecBis}
\end{equation}
	\item[(5)] $S_{n,s}$ consists of all finite places $v=\vp$ at which $\B$ is split, $\E$ is non-split, and $\pi_{\vp}$ is supercuspidal. For $\vp \in S_{n,s}$, we take $\sigma_{\vp} = [\pi_{\vp}';c]$ with $c = \max(\IntP{\cond(\pi_{\vp})/2}+e(\E_{\vp}/\F_{\vp})-1, \IntCP{\cond(\pi_{\vp})/2}) (\leq \cond(\pi_{\vp}))$. From $r=\max(\IntCP{\cond(\Omega_{\vp})/e(\E_{\vp}/\F_{\vp})}-c+e(\E_{\vp}/\F_{\vp})-1,0)$ specified in Lemma \ref{LocalEst6} and $\iota_r$ defined in (\ref{CarEmbNS}), we take $T_{\vp}$ so that (\ref{TransSpec}) holds.
	\item[(6)] $S_{ur}$ consists of all finite places $v=\vp$ at which $\pi_{\vp}$ is unramified/spherical. Necessarily $\B$ is split. For $\vp \in S_{ur}$ we take $\sigma_{\vp}$ to be the one-dimensional subspace of spherical vectors. According as $\E_{\vp}$ is split resp. non-split, we have $r \asymp \cond(\Omega_{\vp})/2$ resp. $r = \IntCP{\cond(\Omega_{\vp})/e(\E_{\vp}/\F_{\vp})}$ specified in Lemma \ref{LocalEst2} resp. \ref{LocalEst4}, $\iota_r$ defined in (\ref{CarEmbS}) resp. (\ref{CarEmbNS}). We take $T_{\vp}$ so that (\ref{TransSpec}) holds. Note that only $S_{ur}$ contains infinitely many places and for all but finitely many such $\vp$ we have $T_{\vp} \in \gp{K}_{\vp}$.
\end{itemize}
\begin{definition}
	Write $e_{\vp} := e(\E_{\vp}/\F_{\vp})$ if $\E$ is non-split at the place $\vp < \infty$. Define
	$$ \Cond^{\sharp}(\Omega, \pi) := \Cond(\Omega) \cdot \prod_{v \in S_{\infty,i,c}} \Cond(\Omega_v)^{2\theta}; $$
	$$ \Cond^{\flat}(\Omega, \pi) := \sideset{}{_{\substack{v \mid \infty \\ v \notin S_0}}} \prod \Cond(\Omega_v) \cdot \sideset{}{_{\substack{\vp < \infty \\ \E_{\vp} \text{ split}}}} \prod \Cond(\Omega_{\vp}) \cdot \sideset{}{_{\substack{\vp < \infty \\ \E_{\vp} \text{ non-split}}}} \prod \Nr(\vp)^{2 \IntCP{\cond(\Omega_{\vp})/e_{\vp}}}. $$
\label{CondWaF}
\end{definition}
\begin{remark}
	By Lemma \ref{CondComp} and Proposition \ref{QuatCondBdInf} \& \ref{QuatCondBdFin}, we have
	$$ \Cond^{\flat}(\Omega,\pi) \asymp_{\pi} \Cond(\Omega). $$
\end{remark}
\begin{lemma}
	Recall the subspace version of the Waldspurger formula (\ref{WaFSV}). For the choice of $\sigma := \otimes_v' \sigma_v$ and $T=(T_v)_v \in \gp{G}(\A)$ given above, we have for any $\epsilon > 0$
	$$ \extnorm{ \sideset{}{_{v \mid \infty}} \prod \alpha_v(T_v.\sigma_v; \Omega_v, \iota_v) \cdot \sideset{}{_{\vp < \infty}} \prod \tilde{\alpha}_{\vp}(T_{\vp}.\sigma_{\vp}; \Omega_{\vp}, \iota_{\vp}) } \gg_{\F,\pi} \Dis(\E)^{-\frac{1}{2}-\epsilon} \Cond^{\sharp}(\Omega, \pi)^{-\frac{1}{2}-\epsilon}. $$
\label{LocalMain}
\end{lemma}
\begin{proof}
	Taking our measure normalization on $\gp{Z} \backslash \gp{T}$ and Lemma \ref{FixEmb} into account, this is just a re-statement of Lemma \ref{LocalEst0}, \ref{LocalEst1}, \ref{LocalEst2}, \ref{LocalEst4}, \ref{LocalEst5} and \ref{LocalEst6}.
\end{proof}

	\subsection{Amplification}
	
	The task is thus reduced to bounding the global period $\alpha(T.\sigma; \Omega; \iota)$. To this end, we set up the method of \emph{amplification}. Let $E$ be a positive real number to be optimized later and define
	$$ I_E := \{ \vp \in S_{ur} \ |\ E \leq q_{\vp}:= \Nr(\vp) \leq 2E, \E_{\vp} \text{ is split}, \cond(\Omega_{\vp}) = 0 \}, \quad M_E := \norm[I_E]. $$
	Our choice of $T_{\vp}$ implies that for $\vp \in I_E$ we have with $\iota_0$ defined in (\ref{CarEmbS})
	$$ \delta_{\vp} \circ (T_{\vp}^{-1} \iota_{\vp} T_{\vp}) = \iota_0. $$
	Let $[\varpi_{\vp}] \in \gp{T}(\F_{\vp})$ and $\chi_{\vp}$ the character of $\F_{\vp}^{\times}$ be such that
	$$ \delta_{\vp}([\varpi_{\vp}]^T) = a(\varpi_{\vp}), \quad \Omega_{\vp}([\varpi_{\vp}]) = \chi_{\vp}(\varpi_{\vp}) \quad \text{where} \quad [\varpi_{\vp}]^T:=T_{\vp}^{-1} [\varpi_{\vp}] T_{\vp}. $$
	By the invariance of the measure, we have the observation that for any $\phi \in \sigma, \vp \in I_E$
	$$ \ell(T.\phi; \Omega, \iota) = \int_{[\gp{T}]} T.\phi(t [\varpi_{\vp}]) \Omega(t [\varpi_{\vp}]) dt = \chi_{\vp}(\varpi_{\vp}) \ell(T[\varpi_{\vp}]. \phi; \Omega, \iota). $$
	Consequently, if we denote by $\Bas(\sigma)$ an/any orthogonal basis of $\sigma$, we get by Cauchy-Schwarz inequality
\begin{align*}
	\alpha(T.\sigma; \Omega, \iota) &= \sum_{\varphi \in \Bas(\sigma)} \frac{\extnorm{\ell(T.\varphi; \Omega, \iota)}^2}{\Norm[\varphi]_{[\gp{G}]}^2} = \sum_{\varphi \in \Bas(\sigma)} \frac{1}{\Norm[\varphi]_{[\gp{G}]}^2} \extnorm{ \frac{1}{M_E} \sum_{\vp \in I_E} \chi(\varpi_{\vp}) \ell(T [\varpi_{\vp}]^T.\varphi; \Omega, \iota) }^2 \\
	&\leq \sum_{\varphi \in \Bas(\sigma)} \frac{1}{\Norm[\varphi]_{[\gp{G}]}^2} \ell \left( T. \extnorm{ \frac{1}{M_E} \sum_{\vp \in I_E} \chi(\varpi_{\vp}) [\varpi_{\vp}]^T. \varphi }^2; 1, \iota \right) \cdot \Vol([\gp{T}], dt) \\
	&= \frac{2 \Lambda(1,\eta_{\E/\F})}{M_E^2} \sum_{\vec{\vp} \in I_E^2} \chi_{\vec{\vp}} \cdot \ell(T.\Phi(\vec{\vp}); 1,\iota),
\end{align*}
	where $\chi_{\vec{\vp}} := \chi_{\vp_1}(\varpi_{\vp_1}) \overline{\chi_{\vp_2}(\varpi_{\vp_2})}, \vec{\vp}:=(\vp_1,\vp_2)$; $1$ is the trivial character of $\gp{T}(\A)$; we have used the Tamagawa number computation (\ref{TamNumT}) and put
	$$ \Phi(\vp_1,\vp_2) := \sum_{\varphi \in \Bas(\sigma)} \Norm[\varphi]_{[\gp{G}]}^{-2} \cdot ([\varpi_{\vp_1}]^T.\varphi) \cdot (\overline{[\varpi_{\vp_2}]^T.\varphi}), $$
	which is a smooth function on $[\gp{G}]$ satisfying the following properties of invariance:
\begin{itemize}
	\item[(0)] It is invariant by $\gp{Z}(\A)$ the center of $\gp{G}(\A)$ and by $\gp{G}(\F_v)$ for any $v \in S_0$;
	\item[(1)] It is invariant by $\gp{K}_v$ for any $v \in S_{\infty,i}$;
	\item[(2)] It is invariant by $\gp{K}_{\vp}$ for any $\infty > \vp \notin \{ \vp_1,\vp_2 \}$;
	\item[(3)] At $\vp = \vp_1 \neq \vp_2$ or $\vp = \vp_2 \neq \vp_1$, it is invariant by $\gp{K}_{\vp} \cap [\varpi_{\vp}]^T \gp{K}_{\vp} ([\varpi_{\vp}]^T)^{-1}$;
	\item[(4)] At $\vp = \vp_1 = \vp_2$, it is invariant by $[\varpi_{\vp}]^T \gp{K}_{\vp} ([\varpi_{\vp}]^T)^{-1}$.
\end{itemize}
	Moreover, if $\B = \Mat_2(\F)$ it is of rapid decay, since $\varphi$ are cusp forms; otherwise $[\gp{G}]$ is compact. In any case, the Fourier inversion of $\Phi(\vp_1,\vp_2)$ with Fourier coefficients denoted by $C_{\vec{\vp}}(\cdot)$ ($\vec{\vp} := (\vp_1,\vp_2)$ for simplicity)
\begin{align*}
	\Phi(\vec{\vp}) &:= \Phi(\vp_1,\vp_2) = \sum_{\vartheta} C_{\vec{\vp}}(\sigma,\vartheta) \frac{\vartheta \circ \Nr_{\B}}{\Vol([\gp{G}])^{1/2}} + \sum_{\varrho \text{ cuspidal}} \sum_{\phi \in \Bas(\varrho)} C_{\vec{\vp}}(\sigma, \varrho) \phi \\
	&\quad + 1_{\B = \Mat_2} \sum_{\xi \in \widehat{\F^{\times} \backslash \A^{(1)}}} \sum_{f \in \Bas(\xi)} \int_{-\infty}^{\infty} C_{\vec{\vp}}(\sigma, \xi, i\tau) \eis(i\tau, f) \frac{d\tau}{4\pi}.
\end{align*}	
normally converges (c.f. \cite[Theorem 2.16]{Wu14}). Here
\begin{itemize}
	\item[(1)] $\Nr_{\B}$ is the (adelic) reduced norm of $\B$. $\vartheta$ runs over quadratic characters of the class group of $\F$, i.e., quadratic Hecke characters unramified at every $\vp < \infty$, which are trivial on $\Nr_{\B}(\B_v^{\times})$ at every $v \in S_0$.
	\item[(2)] $\Bas(\varrho)$ is an orthonormal basis of $\varrho$, which runs over cuspidal representations such that
	\begin{itemize}
		\item $\varrho_v$ is the trivial representation at every $v \in S_0$;
		\item $\varrho_v$ is spherical at every $v \in S_{\infty,i}$ resp. every $\infty > v=\vp \notin \{ \vp_1,\vp_2 \}$ resp. $v=\vp=\vp_1=\vp_2$, and $\phi_v$ is spherical with respect to $\delta_v^{-1}(\SO_2(\R))$ resp. $\gp{K}_{\vp}$ resp. $[\varpi_{\vp}]^T \gp{K}_{\vp} ([\varpi_{\vp}]^T)^{-1}$;
		\item $\cond(\varrho_{\vp}) \leq 1$ at $\vp = \vp_1 \neq \vp_2$ or $\vp = \vp_2 \neq \vp_1$ and $\phi_{\vp}$ is invariant by $\gp{K}_{\vp} \cap [\varpi_{\vp}]^T \gp{K}_{\vp} ([\varpi_{\vp}]^T)^{-1}$.
	\end{itemize}
	\item[(3)] $\xi$ is unramified at every $\vp < \infty$, extended to $\A^{\times}$ by triviality on $s_{\F}(\R_+)$ via a fixed section map $s_{\F}$ of the adelic norm map
	$$ \norm_{\A}: \A^{\times} \to \R_+, $$
	and $\Bas(\xi)$ is an orthonormal basis of the induced representation $\Ind_{\gp{B}(\A)}^{\GL_2(\A)}(\xi,\xi^{-1})$ such that
	\begin{itemize}
		\item $f_v$ is spherical at every $v \in S_{\infty,i}$ resp. every $\infty > v=\vp \notin \{ \vp_1,\vp_2 \}$ resp. $v=\vp=\vp_1=\vp_2$ with respect to $\delta_v^{-1}(\SO_2(\R))$ resp. $\gp{K}_{\vp}$ resp. $[\varpi_{\vp}]^T \gp{K}_{\vp} ([\varpi_{\vp}]^T)^{-1}$;
		\item $f_{\vp}$ is invariant by $\gp{K}_{\vp} \cap [\varpi_{\vp}]^T \gp{K}_{\vp} ([\varpi_{\vp}]^T)^{-1}$ at $\vp = \vp_1 \neq \vp_2$ or $\vp = \vp_2 \neq \vp_1$.
	\end{itemize}
\end{itemize}
	Since $[\gp{T}]$ is always compact, we can insert the Fourier inversion into $\ell(\cdot)$ and get
\begin{align}
	\ell(T.\Phi(\vec{\vp}); 1,\iota) &= \sum_{\vartheta} C_{\vec{\vp}}(\sigma,\vartheta) \ell \left( \frac{T. (\vartheta \circ \Nr_{\B})}{\Vol([\gp{G}])^{1/2}}; 1, \iota \right) + \sum_{\varrho \text{ cuspidal}} \sum_{\phi \in \Bas(\varrho)} C_{\vec{\vp}}(\sigma, \varrho, \phi) \ell(T.\phi; 1, \iota) \nonumber \\
	&\quad + 1_{\B = \Mat_2} \sum_{\xi \in \widehat{\F^{\times} \backslash \A^{(1)}}} \sum_{f \in \Bas(\xi)} \int_{-\infty}^{\infty} C_{\vec{\vp}}(\sigma, \xi, i\tau, f) \ell(T.\eis(i\tau, f); 1, \iota) \frac{d\tau}{4\pi} \nonumber \\
	&=: S_1(\vec{\vp}) + S_{\cusp}(\vec{\vp}) + 1_{\B = \Mat_2} \cdot S_{\Eis}(\vec{\vp}); \label{RegroupS}
\end{align}
\begin{equation}
	\alpha(T.\sigma; \Omega, \iota) \leq \Sigma_1 + \Sigma_{\cusp} + 1_{\B = \Mat_2} \cdot \Sigma_{\Eis}
\label{RegroupSigma}
\end{equation}
	where $C_{\vec{\vp}}(\cdot)$ are the Fourier coefficents of $\Phi(\vec{\vp})$ and we have put
	$$ \Sigma_* := \frac{2 \Lambda(1,\eta_{\E/\F})}{M_E^2} \sideset{}{_{\vec{\vp} \in I_E^2}} \sum \chi_{\vec{\vp}} S_*(\vec{\vp}), \quad * \in \{ 1, \cusp, \Eis \}. $$
\begin{remark}
	The procedure used above is similar to that given in \cite[\S 3]{Wu14}, but simpler thanks to the compactness of the domain of integration. Hence no re-arrange of the Fourier inversion is needed. But consequently, the treatment of $\Sigma_1$ and $\Sigma_{\Eis}$ need to be adapted from \cite{Wu14}.
\end{remark}

	\subsection{Bound of One Dimensional Part}
	
	We first bound $\Sigma_1$.
\begin{lemma}
\noindent (1) The contribution of $\vartheta$ in $S_1(\vec{\vp})$ hence in $\Sigma_1$ is non vanishing only if $\vartheta \in \{ 1, \eta_{\E/\F} \}$. Moreover, it is non-vanishing for $\vartheta = \eta_{\E/\F}$ only if
\begin{itemize}
	\item $S_0 \subset S_{\infty} := \{ v \in S_{\F} \ |\ v \mid \infty \}$, i.e., $\B$ is ramified only at infinite places;
	\item $\E$ is contained in the Hilbert class field of $\F$;
	\item $\E$ is non-split at every $v \in S_0$. 
\end{itemize}
\noindent (2) We have an upper bound for any $\epsilon > 0$
	$$ \norm[\Sigma_1] \ll_{\F,\pi,\epsilon} \Dis(\E)^{\epsilon}(M_E^{-1} + E^{-1+\epsilon}). $$
\label{GlobEst1}
\end{lemma}
\begin{proof}
\noindent (1) For any quadratic Hecke character $\vartheta$, we note that
	$$ \ell(\vartheta \circ \Nr_{\B}; 1, \iota) = \int_{\E^{\times} \A^{\times} \backslash \A_{\E}^{\times}} \vartheta(\Nr_{\E}(t)) dt = \Vol([\gp{T}]) \cdot 1_{\vartheta \mid_{\Nr_{\E}(\A_{\E}^{\times})} = 1}. $$
	By the class field theory, $\F^{\times} \Nr_{\E}(\A_{\E}^{\times}) \backslash \A^{\times} \simeq \mathrm{Gal}(\E/\F)$. Hence the condition $\vartheta \mid_{\Nr_{\E}(\A_{\E}^{\times})} = 1$ is equivalent to $\vartheta \in \{ 1, \eta_{\E/\F} \}$. Moreover, if $\vartheta = \eta_{\E/\F}$ does appear in the spectral decomposition of $\Phi(\vec{\vp})$, then it must be unramified at every $\vp < \infty$ hence a character of the class group of $\F$, and must be trivial on $\Nr_{\B}(\B_v^{\times})$ at every $v \in S_0$. Since $\E$ is contained in $\B$, it is non-split at every $v \in S_0$. But If $\infty > \vp \in S_0$ exists, then $\eta_{\vp} = 1$ since $\Nr_{\B}(\B_{\vp}^{\times}) = \F_{\vp}^{\times}$, hence $\E$ must both be split at $\vp$ and contained in $\B_{\vp}$ the division quaternion algebra over $\F_{\vp}$, which is a contradiction. Thus $S_0 \subset S_{\infty}$.
	
\noindent (2) We deduce from (1) that
\begin{align*}
	\Sigma_1 &= \frac{2 \Lambda(1,\eta_{\E/\F})}{M_E^2} \sideset{}{_{\vec{\vp} \in I_E^2}} \sum \chi_{\vec{\vp}} \sideset{}{_{\vartheta \in \{ 1, \eta \}}} \sum \frac{\Vol([\gp{T}])}{\Vol([\gp{G}])} \vartheta(\Nr_{\B}(T)) \cdot \Pairing{\Phi(\vec{\vp})}{\vartheta \circ \Nr_{\B}}_{[\gp{G}]} \\
	&= \frac{\Lambda(1,\eta_{\E/\F})^2}{M_E^2} \sideset{}{_{\vartheta \in \{ 1, \eta \}}} \sum \vartheta(\Nr_{\B}(T)) \sideset{}{_{\varphi \in \Bas(\sigma)}} \sum \Norm[\varphi]_{[\gp{G}]}^{-2} \extPairing{ \extnorm{ \sideset{}{_{\vp \in I_E}} \sum \chi_{\vp}(\varpi_{\vp}) [\varpi_{\vp}]^T.\varphi }^2 }{\vartheta \circ \Nr_{\B}}_{[\gp{G}]}.
\end{align*}
	Since the first function in the inner product is non-negative and the second one is a character, the contribution of $\vartheta = \eta$ is dominated by the contribution of $\theta = 1$. Thus
\begin{align*}
	\norm[\Sigma_1] &\leq \frac{2\norm[\Lambda(1,\eta_{\E/\F})]^2}{M_E^2} \sideset{}{_{\varphi \in \Bas(\sigma)}} \sum \Norm[\varphi]_{[\gp{G}]}^{-2} \extPairing{ \extnorm{ \sideset{}{_{\vp \in I_E}} \sum \chi_{\vp}(\varpi_{\vp}) [\varpi_{\vp}]^T.\varphi }^2 }{1}_{[\gp{G}]} \\
	&= \frac{2\norm[\Lambda(1,\eta_{\E/\F})]^2}{M_E^2} \sideset{}{_{\varphi \in \Bas(\sigma)}} \sum \sideset{}{_{\vec{\vp} \in I_E^2}} \sum \chi_{\vec{\vp}} \frac{\Pairing{[\varpi_{\vp_1}]^T.\varphi}{[\varpi_{\vp_2}]^T. \varphi}_{[\gp{G}]}}{\Norm[\varphi]_{[\gp{G}]}^2} \\
	&\leq \frac{2\norm[\Lambda(1,\eta_{\E/\F})]^2}{M_E} \norm[\Bas(\sigma)] + \frac{2\norm[\Lambda(1,\eta_{\E/\F})]^2}{M_E^2} \sideset{}{_{\varphi \in \Bas(\sigma)}} \sum \sideset{}{_{\substack{\vec{\vp} \in I_E^2 \\ \vp_1 \neq \vp_2 }}} \sum \frac{\extnorm{ \Pairing{[\varpi_{\vp_1}]^T.\varphi_{\vp_1}}{\varphi_{\vp_1}} }}{\Norm[\varphi_{\vp_1}]_{\vp_1}^2} \frac{\extnorm{ \Pairing{[\varpi_{\vp_2}]^T.\varphi_{\vp_2}}{\varphi_{\vp_2}} }}{\Norm[\varphi_{\vp_2}]_{\vp_2}^2},
\end{align*}
	where we recall the choice of local norms specified in Theorem \ref{WaF}. Since $\varphi_{\vp_1}$ and $\varphi_{\vp_2}$ are spherical by our choice, \cite[Theorem 4.6.5]{Bu98} implies
	$$ \frac{\extnorm{ \Pairing{[\varpi_{\vp}]^T.\varphi_{\vp}}{\varphi_{\vp}} }}{\Norm[\varphi_{\vp}]_{\vp}^2} = \frac{\Nr(\vp)^{-1/2}}{1+\Nr(\vp)^{-1}} \norm[\mathrm{tr}_{\vp}], \quad \vp \in \{ \vp_1, \vp_2 \} $$
	where $\mathrm{tr}_{\vp} = \alpha_{\vp} + \alpha_{\vp}^{-1}$ is the sum of Satake parameters of $\pi_{\vp}$. Iwaniec's trick \cite[Lemma 6.1]{Wu14} or \cite[(19.16)]{DFI02} implies that for any $\epsilon > 0$
	$$ \sideset{}{_{\substack{\vec{\vp} \in I_E^2 \\ \vp_1 \neq \vp_2 }}} \sum \frac{\extnorm{ \Pairing{[\varpi_{\vp_1}]^T.\varphi_{\vp_1}}{\varphi_{\vp_1}} }}{\Norm[\varphi_{\vp_1}]_{\vp_1}^2} \frac{\extnorm{ \Pairing{[\varpi_{\vp_2}]^T.\varphi_{\vp_2}}{\varphi_{\vp_2}} }}{\Norm[\varphi_{\vp_2}]_{\vp_2}^2} \ll E^{-1} \left( \sideset{}{_{\vp \in I_E}} \sum \norm[\mathrm{tr}_{\vp}] \right)^2 \ll_{\F,\pi,\epsilon} \frac{M_E^2}{E} E^{\epsilon}. $$
	We conclude by noting $\norm[\Bas(\sigma)] \ll_{\pi} 1$ and Siegel's upper bound for $\Lambda(1,\eta)$.
\end{proof}

	\subsection{Bound of Cuspidal Part}
	
	We then turn to $\Sigma_{\cusp}$. We shall bound it under an assumption that we state as a theorem.
\begin{theorem}
	Fix $\B$ a quaternion algebra defined over $\F$ with the set of ramification places $\Ram(\B)$. Let $S$ be a finite set of finite places of $\F$. Let $\E$ vary over quadratic field extensions of $\F$ contained in $\B$, with discriminant $\Dis(\E)$. We write $\eta_{\E/\F}$ for the quadratic Hecke character associated by the class field theory. We sum $\varrho$ over the cuspidal representations of $\gp{G}_{\B}(\A)$ under the following restrictions:
\begin{itemize}
	\item the central character of $\varrho$ is trivial;
	\item $\varrho_v = 1$ is the trivial representation at every $v \in \Ram(\B)$;
	\item $\varrho_{\vp}$ is either spherical or Steinberg at every $\vp \in S$, and spherical at every finite place $\vp \notin S \cup \Ram(\B)$.
\end{itemize}
	We denote by
\begin{itemize}
	\item $\JL(\varrho)$ the Jacquet-Langlands lifting of $\varrho$ on $[\GL_2]$;
	\item $\lambda_{\rho,\infty}$ the sum over $v \mid \infty, v \notin \Ram(\B)$ of eigenvalues belonging to the lowest weight vector of $\varrho_v$ of the operator $\Delta_v := - \Casimir_{\SL_2(\F_v)} - 2 \Casimir_{\gp{K}_v}$, where $\Casimir_{*}$ denotes the Casimir element of the relevant Lie group $*$ and we have identified $\B_v$ with $\Mat_2(\F_v)$ via $\delta_v$;
	\item $E(S) := \sideset{}{_{\vp \in S}} \prod \Nr(\vp)$.
\end{itemize}
	Then there exist constants $B > 0, 0 < \delta < 1/2$ such that for any $\epsilon > 0$ and sufficiently large $A > 0$
	$$ \sum_{\varrho} \frac{L(1/2,\JL(\varrho)) L(1/2, \JL(\varrho) \otimes \eta_{\E/\F})}{L(1, \JL(\varrho), \mathrm{Ad})} \lambda_{\varrho, \infty}^{-A} \ll_{\F,\B,\epsilon} (E(S)\Dis(\E))^{\epsilon} E(S)^{1+B} \Dis(\E)^{\frac{1}{2}-\delta}. $$
\label{AvgBound}
\end{theorem}
\begin{remark}
	The above theorem can be deduced from individual subconvex bounds for $L(1/2,\pi \otimes \eta_{\E/\F})$, \emph{effective} on the polynomial dependence on the analytic conductor of the fixed cuspidal representation $\pi$ on $[\PGL_2]$. Such a version is established in \cite[Theorem 2.3]{Wu3}. However,
\begin{itemize}
	\item[(1)] the exponent $B$ provided by the current version of \cite[Theorem 2.3]{Wu3} still needs to be optimized;
	\item[(2)] \cite[Theorem 2.3]{Wu3} deals with twists by general Hecke characters instead of quadratic ones $\eta_{\E/\F}$, and we feel that for the purpose of Theorem \ref{AvgBound} a relative trace formula approach would give a better $B$ (even possibly better $\delta$).
\end{itemize}
\end{remark}
\begin{lemma}
\noindent (1) For $\phi \in \Bas(\varrho)$ appearing in (\ref{RegroupS}), we have for any $\epsilon > 0$
	$$ \extnorm{\ell(T.\phi; 1, \iota)} \ll_{\pi, \theta, \epsilon} \sqrt{\frac{L(1/2, \JL(\varrho)_{\E})}{L(1, \JL(\varrho), \mathrm{Ad})}} \Dis(\E)^{-1/4} \cdot \left( \sideset{}{_{v \in S_{\infty,s}}} \prod \dim(\gp{K}_v.\phi_v)^{1/2} \right) \cdot \Cond^{\flat}(\Omega,\pi)^{-(1-2\theta)/4+\epsilon}, $$
where $\JL(\varrho)$ is the Jacquet-Langlands lifting of $\varrho$ on $[\GL_2]$.

\noindent (2) Consider the case $\vec{\vp}=(\vp_1,\vp_2)$ with $\vp_1 \neq \vp_2$. Assuming Theorem \ref{AvgBound}, we have for any $\epsilon > 0$
	$$ \norm[S_{\cusp}(\vec{\vp})] \ll_{\F, \pi, \epsilon} (E\Dis(\E)\Cond^{\flat}(\Omega,\pi))^{\epsilon} E^{1+B} \Dis(\E)^{-\frac{\delta}{2}} \Cond^{\flat}(\Omega,\pi)^{-\frac{1-2\theta}{4}}. $$
	Consequently, we get
	$$ \norm[\Sigma_{\cusp}] \ll_{\F, \pi, \epsilon} (E\Dis(\E)\Cond^{\flat}(\Omega,\pi))^{\epsilon} E^{1+B} \Dis(\E)^{-\frac{\delta}{2}} \Cond^{\flat}(\Omega,\pi)^{-\frac{1-2\theta}{4}}. $$
\label{GlobEst2}
\end{lemma}
\begin{proof}
\noindent (1) Since $\phi$ is $\intL^2$-normalized, (\ref{WaFSV}) implies
\begin{align*}
	\extnorm{ \ell(T.\phi; 1, \iota) }^2 &= \alpha(T.\phi; 1, \iota) = \frac{L(1/2, \JL(\varrho)_{\E})}{2 L(1, \JL(\varrho), \mathrm{Ad})} \cdot \\
	&\quad \sideset{}{_{v \mid \infty}} \prod \frac{L_v(1,\eta_v)}{\zeta_v(2)} \cdot \sideset{}{_{v \mid \infty}} \prod \alpha_v(T_v.\phi_v; 1, \iota_v) \cdot \sideset{}{_{\vp < \infty}} \prod \tilde{\alpha}_{\vp}(\phi_{\vp}; 1, \iota_{\vp}).
\end{align*}
	The desired bound then follows from the relevant local bounds place by place and our normalization of the measures on $\gp{Z}_v \backslash \gp{T}_v$:

\noindent (\rmnum{1}) At $v \in S_0$, $\varrho_v$ is the trivial representation, hence $\phi_v$ is a $\gp{G}_v$-invariant function. Thus
	$$ \alpha_v(T_v.\phi_v; 1, \iota_v) = \int_{\gp{Z}_v \backslash \gp{T}_v} dt = \Vol(\gp{Z}_v \backslash \gp{T}_v). $$
	
\noindent (\rmnum{2}) At $v \in S_{\infty,i}$ resp. $\infty > v=\vp \notin \{ \vp_1, \vp_2 \}$ resp. $v=\vp=\vp_1=\vp_2$, $\varrho_v$ is spherical and $\phi_v$ is spherical resp. spherical resp. $[\varpi_{\vp}]^T$-translate of a spherical vector. In the first two cases, we have by Lemma \ref{FixEmb}, according as $v \notin S_{n,n}$ or $v \in S_{n,n}$
	$$ \alpha_v(T_v.\phi_v; 1, \iota_v) = \alpha_v(\phi_v; 1, \delta_v^{-1} \circ \iota_r) \quad \text{or} \quad \alpha_{\vp}(T_{\vp}.\phi_{\vp}; 1, \iota_{\vp}) = \alpha_{\vp}(\phi_{\vp}; 1, \delta_{\vp}^{-1} \circ \iota_r'). $$
	In the last case, we have
	$$ \alpha_{\vp}(T_{\vp}.\phi_{\vp}; 1, \iota_{\vp}) = \alpha_{\vp}(\phi_{\vp}; 1, \delta_{\vp}^{-1} \circ \iota_0) = \alpha_{\vp}(([\varpi_{\vp}]^T)^{-1}.\phi_{\vp}; 1, \delta_{\vp}^{-1} \circ \iota_0) $$
	since $\delta_{\vp}([\varpi_{\vp}]^T) = a(\varpi_{\vp})$ lies in the stabilizer of $\iota_0$. We can thus apply either Lemma \ref{LocalEstSphUpBis} or Lemma \ref{LocalEstSphUp}, since the relevant vectors are spherical, and get
	$$ \frac{\alpha_v(T_v.\phi_v; 1, \iota)}{\Vol(\gp{Z}_v \backslash \gp{T}_v)} \left\{ \begin{matrix} \ll_{\epsilon} \Cond(\Omega_v)^{-(1-2\theta)/2+\epsilon} & \text{if } v \in S_{\infty,i} \\ \ll_{\pi_{\vp},\epsilon} \Cond(\Omega_{\vp})^{-(1-2\theta)/2+\epsilon} & \text{if } v=\vp<\infty, \E_{\vp} \text{ not ramified}, \cond(\Omega_{\vp}) > 0 \\ \ll_{\pi_{\vp}, \epsilon} \Nr(\vp)^{-(1-2\theta)\IntCP{\cond(\Omega_{\vp})/2}+\epsilon} & \text{if } v=\vp<\infty, \E_{\vp} \text{ ramified}, \cond(\Omega_{\vp}) > 0 \\ = \frac{\zeta_{\vp}(2) L_{\vp}(1/2,\JL(\varrho)_{\E_{\vp}})}{L_{\vp}(1,\eta_{\vp})L_{\vp}(1,\JL(\varrho_{\vp}), \mathrm{Ad})} & \text{if } v=\vp<\infty, \cond(\Omega_{\vp}) = 0 \end{matrix} \right. . $$
	
\noindent (\rmnum{3}) At $v \in S_{\infty,s}$, $\phi_v$ runs over an orthonormal basis of $\gp{K}_v$-isotypic vectors of $\varrho_v$. By (\ref{SPFRIndividual}), \cite[Lemma 6.8 \& Corollary 6.9]{Wu14} are applicable and give
	$$ \alpha_v(T_v.\phi_v; 1, \iota_v) \ll_{\theta,\epsilon} \dim(\gp{K}_v.\phi_v) \Cond(\Omega_v)^{-(1-2\theta)/2+\epsilon}. $$
	
\noindent (\rmnum{4}) At $v=\vp=\vp_1 \neq \vp_2$ or $\vp=\vp_2 \neq \vp_1$, $\E_{\vp}$ is split. Hence $\varrho_{\vp}$ is either spherical or Steinberg. By (\ref{SPFRIndividual}), \cite[Lemma 6.12]{Wu14} is applicable and gives
	$$ \tilde{\alpha}_{\vp}(T_{\vp}.\phi_{\vp}; 1, \iota_{\vp}) \ll_{\theta,\epsilon} 1. $$
	
\noindent (2) Denote by $\Delta_{\B,\infty} = \sideset{}{_v} \sum \Delta_v$ the sum of local Laplacian operators at $v \mid \infty, v \notin \Ram(\B)$ defined in Theorem \ref{AvgBound}. Write $\lambda_{\phi,\infty}$ to be the eigenvalue belonging to $\phi$ of $\Delta_{\B,\infty}$. Inserting the bound in (1) and applying the Cauchy-Schwarz inequality we get for $l \gg 1$ (say odd integer)
\begin{align*}
	\norm[S_{\cusp}(\vec{\vp})] &\ll_{\pi,\epsilon} \Dis(\E)^{-\frac{1}{4}} \Cond^{\flat}(\Omega,\pi)^{-\frac{1-2\theta}{4}} \sideset{}{_{\varrho}} \sum \sideset{}{_{\phi}} \sum \lambda_{\phi,\infty}^{\frac{l+1}{2}} C_{\vec{\vp}}(\sigma,\varrho,\phi) \cdot \sqrt{\frac{L(1/2, \JL(\varrho)_{\E})}{L(1, \JL(\varrho), \mathrm{Ad})}} \lambda_{\phi,\infty}^{-\frac{l}{2}} \\
	&\leq \Dis(\E)^{-\frac{1}{4}} \Cond^{\flat}(\Omega,\pi)^{-\frac{1-2\theta}{4}} \left( \sideset{}{_{\varrho}} \sum \sideset{}{_{\phi}} \sum \lambda_{\phi,\infty}^{l+1} C_{\vec{\vp}}(\sigma,\varrho,\phi)^2 \right)^{\frac{1}{2}} \left( \sideset{}{_{\varrho}} \sum \sideset{}{_{\phi}} \sum \frac{L(1/2, \JL(\varrho)_{\E})}{L(1, \JL(\varrho), \mathrm{Ad})} \lambda_{\phi,\infty}^{-l} \right)^{\frac{1}{2}} \\
	&\ll \Dis(\E)^{-\frac{1}{4}} \Cond^{\flat}(\Omega,\pi)^{-\frac{1-2\theta}{4}} \extNorm{ \Delta_{\B,\infty}^{\frac{l+1}{2}} \Phi(\vec{\vp}) }_2 \cdot \left( \sideset{}{_{\varrho}} \sum \frac{L(1/2, \JL(\varrho)_{\E})}{L(1, \JL(\varrho), \mathrm{Ad})} \lambda_{\varrho,\infty}^{-\frac{l}{2}} \right)^{\frac{1}{2}}.
\end{align*}
	Note that $\Delta_{\B,\infty}^{\frac{l+1}{2}} \Phi(\vec{\vp})$ is a finite sum of
	$$ \sideset{}{_{\varphi \in \Bas(\sigma)}} \sum \Norm[\varphi]_{[\gp{G}]}^{-2} D_1.[\varpi_{\vp_1}]^T.\varphi \cdot \overline{ D_2.[\varpi_{\vp_2}]^T.\varphi } $$
	for certain pairs of differential operators $(D_1,D_2)$, and that
	$$ \extNorm{ D_1.[\varpi_{\vp_1}]^T.\varphi \cdot \overline{ D_2.[\varpi_{\vp_2}]^T.\varphi  } }_2 \leq \extNorm{ D_1.[\varpi_{\vp_1}]^T.\varphi }_4 \extNorm{ D_2.[\varpi_{\vp_2}]^T.\varphi }_4 = \extNorm{ D_1.\varphi }_4 \extNorm{ D_2.\varphi }_4, $$
	we deduce $\Norm[ \Delta_{\B,\infty}^{\frac{l+1}{2}} \Phi(\vec{\vp}) ]_2 \ll_{\pi} 1$. We thus conclude the first bound by Theorem \ref{AvgBound}. For the (diagonal) case $\vp_1 = \vp_2$, we proceed similarly to get a bound without the appearance of $E$ by applying Theorem \ref{AvgBound} with $S = \emptyset$. We then deduce the second bound.
\end{proof}

	\subsection{Bound of Eisensetein Part}
	
	We finally deal with $\Sigma_{\Eis}$. As in the cuspidal case, we need an assumption which we state as a theorem.
\begin{theorem}
	We fix a section $s_{\F}$ of the adelic norm map $\norm_{\A}: \A^{\times} \to \R_{>0}$. Let $\xi$ run over the Hecke characters of $\F^{\times} \backslash \A^{\times}$ trivial on the image of $s_{\F}$ and unramified at every finite place $v=\vp < \infty$. Let $\E$ vary over quadratic field extensions of $\F$ with discriminant $\Dis(\E)$. We write $\eta_{\E/F}$ for the quadratic Hecke character associated by the class field theory. We denote by $\lambda_{\xi,s,\infty}$ the eigenvalue of $- \sideset{}{_{v \mid \infty}} \sum \Casimir_{\SL_2(\F_v)}$ on 
	$$ \sideset{}{_{v \mid \infty}} \prod \pi( \xi_v \norm_v^s, \xi_v^{-1} \norm_v^{-s} ), \quad \pi( \xi_v \norm_v^s, \xi_v^{-1} \norm_v^{-s} ) = \Ind_{\gp{B}(\F_v)}^{\GL_2(\F_v)} ( \xi_v \norm_v^s, \xi_v^{-1} \norm_v^{-s} ). $$
	Then there exists a constant $0 < \delta' < 1/2$ such that for any $\epsilon$ and sufficiently large $A>0$ we have
	$$ \sideset{}{_{\xi}} \sum \int_{-\infty}^{\infty} \extnorm{\frac{L(1/2+i\tau,\xi)L(1/2+i\tau,\xi \eta)}{L(1+2i\tau,\xi)}}^2 \lambda_{\xi,s,\infty}^{-A} \frac{d\tau}{4\pi} \ll_{\F,\epsilon} \Dis(\E)^{\frac{1}{2}-\delta'+\epsilon}. $$
\label{AvgBoundEis}
\end{theorem}
\begin{remark}
	The above theorem can be deduced from individual subconvex bounds for $L(1/2,\xi \eta)$ provided by \cite{Wu2} with $\delta'=(1-2\theta)/8$. As in the cuspidal case, we feel that a relative trace formula approach would give a better $\delta'$. In general, one can expect that $\delta' \geq \delta$ provided by Theorem \ref{AvgBound}. Hence the major obstruction comes from the cuspidal spectrum other than the continuous one.
\end{remark}
\begin{lemma}
\noindent (1) For $f \in \Bas(\xi)$ appearing in (\ref{RegroupS}), we have for any $s \in i\R$ and $\epsilon > 0$
	$$ \extnorm{ \ell(T.\eis(s,f); 1, \iota) } \ll_{\F,\pi,\epsilon} \extnorm{ \frac{L(1/2+s,\xi_{\E})}{L(1+2s,\xi^2)} } \Dis(\E)^{-1/4} \cdot \left( \sideset{}{_{v \mid \infty}} \prod \dim(\gp{K}_v.f_v)^{1/2} \right) \cdot \Cond^{\flat}(\Omega,\pi)^{-1/4+\epsilon}, $$
	where $L(1/2+s,\xi_{\E}) = L(1/2+s,\xi)L(1/2+s,\xi\eta)$.
	
\noindent (2) Consider the case $\vec{\vp}=(\vp_1,\vp_2)$ with $\vp_1 \neq \vp_2$. Assuming Theorem \ref{AvgBoundEis}, we have for any $\epsilon > 0$
	$$ \norm[ S_{\Eis}(\vec{\vp}) ] \ll_{\F,\pi,\epsilon} (\Dis(\E) \Cond^{\flat}(\Omega,\pi))^{\epsilon} \Dis(\E)^{-\frac{\delta'}{2}} \Cond^{\flat}(\Omega,\pi)^{-\frac{1}{4}}. $$
	Consequently, we get
	$$ \norm[ \Sigma_{\Eis} ] \ll_{\F,\pi,\epsilon} (\Dis(\E) \Cond^{\flat}(\Omega,\pi))^{\epsilon} \Dis(\E)^{-\frac{\delta'}{2}} \Cond^{\flat}(\Omega,\pi)^{-\frac{1}{4}}. $$
\label{GlobEst3}
\end{lemma}
\begin{proof}
\noindent (1) Since $f$ is normalized, we have $\norm[ \ell(T.\eis(s,f); 1, \iota) ]^2 = \norm[ \ell(T.\eis(s,f); 1, \iota) ]^2 / \extNorm{ \eis(s,f) }_{\Eis}^2$, to which we can apply Proposition \ref{WieF}. We obtain the desired bound by the same argument as in the proof of Lemma \ref{GlobEst2} (1), with $\theta=0$.

\noindent (2) Denote by $\Delta_{\infty} = \Delta_{\Mat_2,\infty}$ defined in the proof of Lemma \ref{GlobEst2} (2). Write $\lambda_{f,s,\infty}$ to be the eigenvalue belonging to $f_s$ of $\Delta_{\infty}$. Inserting the bound in (1) and applying the Cauchy-Schwarz inequality we get for $l \gg 1$
\begin{align*}
	\norm[ S_{\Eis}(\vec{\vp}) ] &\ll_{\pi,\epsilon} \Dis(\E)^{-\frac{1}{4}} \Cond^{\flat}(\Omega,\pi)^{-\frac{1}{4}} \sideset{}{_{\xi}} \sum \sideset{}{_{f \in \Bas(\xi)}} \sum \int_{-i\infty}^{i\infty} \lambda_{f,s,\infty}^{\frac{l+1}{2}} C_{\vec{\vp}}(\sigma,\xi,s,f) \cdot \extnorm{\frac{L(1/2+s,\xi_{\E})}{L(1+2s,\xi^2)}} \lambda_{f,s,\infty}^{-\frac{l}{2}} \frac{ds}{4\pi i} \\
	&\leq \Dis(\E)^{-\frac{1}{4}} \Cond^{\flat}(\Omega,\pi)^{-\frac{1}{4}} \cdot \left( \sideset{}{_{\xi}} \sum \sideset{}{_{f \in \Bas(\xi)}} \sum \int_{-i\infty}^{i\infty} \lambda_{f,s,\infty}^{l+1} C_{\vec{\vp}}(\sigma,\xi,s,f)^2 \frac{ds}{4\pi i} \right)^{\frac{1}{2}} \\
	&\quad \cdot \left( \sideset{}{_{\xi}} \sum \sideset{}{_{f \in \Bas(\xi)}} \sum \int_{-i\infty}^{i\infty} \extnorm{ \frac{L(1/2+s,\xi_{\E})}{L(1+2s,\xi^2)} }^2 \lambda_{f,s,\infty}^{-l} \frac{ds}{4\pi i} \right)^{\frac{1}{2}} \\
	&\ll \Dis(\E)^{-\frac{1}{4}} \Cond^{\flat}(\Omega,\pi)^{-\frac{1}{4}} \cdot \extNorm{ \Delta_{\infty}^{\frac{l+1}{2}} \Phi(\vec{\vp}) }_2 \cdot \left( \sideset{}{_{\xi}} \sum \int_{-i\infty}^{i\infty} \extnorm{ \frac{L(1/2+s,\xi_{\E})}{L(1+2s,\xi^2)} }^2 \lambda_{\xi,s,\infty}^{-\frac{l}{2}} \frac{ds}{4\pi i} \right)^{\frac{1}{2}}.
\end{align*}
	We conclude by arguing as in the proof of Lemma \ref{GlobEst2} (2) with Theorem \ref{AvgBound} replaced by Theorem \ref{AvgBoundEis}.
\end{proof}

	\subsection{Finalizing the Proof}
	
	Inserting Lemma \ref{GlobEst1}, \ref{GlobEst2} and \ref{GlobEst3} into (\ref{RegroupSigma}) and taking Lemma \ref{LocalMain} into account, we get
\begin{align*}
	&\quad \extnorm{ \frac{L(1/2,\pi_{\E} \otimes \Omega)}{L(1,\pi,\mathrm{Ad})} } \Dis(\E)^{-\frac{1}{2}} \Cond^{\sharp}(\Omega,\pi)^{-\frac{1}{2}} \ll_{\F,\pi} \norm[\Sigma_1] + \norm[\Sigma_{\cusp}] + 1_{\B = \Mat_2} \cdot \norm[\Sigma_{\Eis}] \\
	&\ll_{\F,\pi,\epsilon} (E\Dis(\E)\Cond^{\flat}(\Omega,\pi))^{\epsilon} \cdot \left\{ (M_E^{-1} + E^{-1}) + E^{1+B} \Dis(\E)^{-\frac{\delta}{2}} \Cond^{\flat}(\Omega,\pi)^{-\frac{1-2\theta}{4}} \right. \\
	&\quad \left. + 1_{\B = \Mat_2} \cdot \Dis(\E)^{-\frac{\delta'}{2}} \Cond^{\flat}(\Omega,\pi)^{-\frac{1}{4}} \right\}.
\end{align*}
	It is expected that $\delta' \geq \delta$, hence we can ignore the contribution from the last line. Under the assumption
	$$ M_E \gg E/\log E $$
	for the choice of $E$ that will be determined in a moment, we get the desired bound
	$$ \norm[L(1/2,\pi_{\E} \times \Omega)] \ll_{\F,\pi,\epsilon} (\Dis(\E)\Cond^{\flat}(\Omega,\pi))^{\epsilon} \cdot \Dis(\E)^{\frac{1}{2} - \frac{\delta}{2(2+B)}} \Cond^{\sharp}(\Omega,\pi)^{\frac{1}{2}} \Cond^{\flat}(\Omega,\pi)^{-\frac{1-2\theta}{4(2+B)}} $$
	for the optimal choice
	$$ E = \Dis(\E)^{\frac{\delta}{2(2+B)}} \Cond^{\flat}(\Omega,\pi)^{\frac{1-2\theta}{4(2+B)}}. $$

\section{Abundance of Split Places}

	\subsection{Statement of a Probabilistic Result}

	For any field $\F$, denote by $\mathrm{Et}_2(\F)$ the set of quadratic separable extensions of $\F$. If $\F$ is a number field with ring of integers $\vo_{\F}$ and degree $d_{\F} := [\F:\Q]$, then its characteristic is $0$. Hence we have an identification
	$$ \F^{\times}/(\F^{\times})^2 \simeq \mathrm{Et}_2(\F), \quad [u] \mapsto [t].\F[\sqrt{u}] := \F[\sqrt{u}], $$
	which equip $\mathrm{Et}_2(\F)$ with the structure of a $2$-group. Take $U < \F^{\times}/(\F^{\times})^2 $ any finitely generated subgroup. Then $U$ is finite and acts on $\mathrm{Et}_2(\F)$ by
	$$ U \times \mathrm{Et}_2(\F) \to \mathrm{Et}_2(\F), \quad ([t], \F[\sqrt{u}]) \mapsto \F[\sqrt{tu}]. $$
	We can choose $\tilde{U} \subset \F^{\times}$ a system of representatives of $U$ and denote by
\begin{equation}
	S_U := \left\{ \vp \nmid 2 \text{ prime ideal of } \F \quad \middle| \quad \exists u \in \tilde{U}, 2 \nmid \mathrm{ord}_{\vp}(u \vo_{\F}) \right\}, \quad \Dis(U) := \sideset{}{_{\vp \in S_U}} \prod \Nr(\vp),
\label{UInv}
\end{equation}
	where $\mathrm{ord}_{\vp}(I)$ is the exponent of $\vp$ in the decomposition of the (fractional) ideal $I$.
\begin{lemma}
	For any $\E \in \mathrm{Et}_2(\F)$ and $[u] \in U$, we have
	$$ 2^{-3d_{\F}} \Dis(U)^{-1} \leq \Dis([u].\E) / \Dis(\E) \leq 2^{3d_{\F}} \Dis(U). $$
	Here $\Dis(\E)$ denotes the absolute discriminant of $\E$ and $\Dis(\E_0) = \Dis(\F)^2$ by convention for the split extension $\E_0 := \F \times \F$.
\label{DisComp}
\end{lemma}
\begin{proof}
	For any prime ideal $\vp$, the natural inclusion $\F \hookrightarrow \F_{\vp}$ induces a map $\mathrm{Et}_2(\F) \to \mathrm{Et}_2(\F_{\vp})$ as well as a group homomorphism $U \to U_{\vp}$ compatible with the group actions, i.e., a commutative diagram
	$$ \begin{matrix} U & \times & {\rm Et}_2(\F) & \to & {\rm Et}_2(\F) \\ \downarrow & & \downarrow & & \downarrow \\ U_{\vp} & \times & {\rm Et}_2(\F_{\vp}) & \to & {\rm Et}_2(\F_{\vp}) \end{matrix}. $$
	The desired bound follows from the local ones provided by the following two cases.
	
\noindent (1) $\vp \nmid 2$. Let $\varpi_{\vp}$ be a uniformizer of $\F_{\vp}$ and take a $\varepsilon_{\vp} \in \vo_{\vp}^{\times} - (\vo_{\vp}^{\times})^2$. By \cite[\S 2.1.6]{GGP69}, we have
\begin{equation}
	\mathrm{Et}_2(\F_{\vp}) = \{ \F_{\vp} \times \F_{\vp}, \F_{\vp}[\sqrt{\varepsilon_{\vp}}] \} \sqcup \{ \F_{\vp}[\sqrt{\varpi_{\vp}}], \F_{\vp}[\sqrt{\varepsilon_{\vp} \varpi_{\vp}}] \}.
\label{OddEt2}
\end{equation}
	If $\vp \notin S_U$, then $U_{\vp}$ is contained in $\{ (\F_{\vp}^{\times})^2, \varepsilon_{\vp} (\F_{\vp}^{\times})^2 \}$. Consequently, the relative discriminants
	$$ \Dis([u].\E \otimes_{\F} \F_{\vp} / \F_{\vp}) = \Dis(\E \times_{\F} \F_{\vp} / \F_{\vp}) \in \{ \vo_{\vp}, \vp \vo_{\vp} \} $$
	are equal with each other. If $\vp \in S_U$, then we have
	$$ \Dis([u].\E \otimes_{\F} \F_{\vp} / \F_{\vp}) / \Dis(\E \times_{\F} \F_{\vp} / \F_{\vp}) \in \{ \vp^{-1}\vo_{\vp}, \vo_{\vp}, \vp\vo_{\vp} \}. $$
	
\noindent (2) $\vp \mid 2$. For any $\F_{\vp} \times \F_{\vp} \neq \E_{\vp} \in \mathrm{Et}_2(\F_{\vp})$, we can always find $x \in \F_{\vp}$ with $x \in \vo_{\vp} - \vp^2 \vo_{\vp}$ such that $\E_{\vp} = \F_{\vp}[\sqrt{x}]$. If $\vO_{\vp}$ denotes the ring of integers of $\E_{\vp}$, then we always have
	$$ \vo_{\vp}[\sqrt{x}] \subset \vO_{\vp} \subset \frac{1}{2}\vo_{\vp}[\sqrt{x}]. $$
	If $x \notin \vp \vo_{\vp}$, then taking the dual modules (with respect to the trace) we get
	$$ \vo_{\vp}[\sqrt{x}] \subset \left( \frac{1}{2} \vo_{\vp}[\sqrt{x}] \right)^* \subset \vO_{\vp}^* \subset \vo_{\vp}[\sqrt{x}]^* \subset \frac{1}{2} \vo_{\vp}[\sqrt{x}] \subset \frac{1}{2} \vO_{\vp} \quad \Rightarrow \quad \Dis(\E_{\vp} / \F_{\vp}) \mid 4 \vo_{\vp}. $$
	Similarly if $x \in \vp \vo_{\vp} - \vp^2 \vo_{\vp}$, then taking the dual modules we get
	$$ \frac{\vo_{\vp}[\sqrt{x}]}{\sqrt{x}} \subset \left( \frac{1}{2} \vo_{\vp}[\sqrt{x}] \right)^* \subset \vO_{\vp}^* \subset \vo_{\vp}[\sqrt{x}]^* \subset \frac{\vo_{\vp}[\sqrt{x}]}{2\sqrt{x}} \subset \frac{\vO_{\vp}}{2\sqrt{x}} \quad \Rightarrow \quad \Dis(\E_{\vp}/\F_{\vp}) \mid 4\vp\vo_{\vp}. $$
	Taking the worse case of the two, we deduce that for every $\E \in \mathrm{Et}_2(\F)$
	$$ \Dis(\E \otimes_{\Q} \Q_2 / \Q_2) \mid 4^{d_{\F}} 2^{\sideset{}{_{\vp \mid 2}} \sum f_{\vp}} \Z_2 \mid 2^{3d_{\F}} \Z_2, $$
	where $f_{\vp}$ is the degree of the residual field $f_{\vp} = [\vo_{\vp}/\vp : \Z/2\Z]$.
\end{proof}
	
	For any $E > 0$ and $\E \in \mathrm{Et}_2(\F)$, write
	$$ \pi_{\F}(E) = \left\{ \vp \text{ (integral) prime ideal of } \F \quad \middle| \quad \Nr(\vp) \leq E \right\}, $$
	$$ \mathrm{Sp}_{\F}(E,\E) = \left\{ \vp \text{ prime ideal of } \F \quad \middle| \quad \Nr(\vp) \leq E, \vp \text{ split in } \E \right\}. $$
\begin{theorem}
	Fix $U$ with $\norm[U] \geq 2$. For any orbit $U.\E$ of $U$ and any constants $\epsilon, \delta > 0$ there exist two constants $c_j=c_j(\epsilon, \delta, U)$ so that for any $E > \max(c_1,c_2\Dis(\E)^{\delta})$, we have
	$$ \frac{1}{\norm[U]} \sideset{}{_{[u] \in U}} \sum \extnorm{ \mathrm{Sp}_{\F}(E, [u].\E) } > \left( \frac{1}{2} - \frac{1}{2\norm[U]} - \epsilon \right) \norm[ \pi_{\F}(E) ]. $$
\label{SpPLBd}
\end{theorem}
\noindent Before entering into the proof of the main estimation above, we deduce a consequence which is of main interest for our purpose.
\begin{corollary}
	Under the same conditions as in Theorem \ref{SpPLBd} (with possibly two different constants $c_j'$), for any $E > \max(c_1,c_2 \Dis(\E)^{\delta})$, there exists at most one extension $\E'$ in the orbit $U.\E$ such that
	$$ \norm[\mathrm{Sp}_{\F}(E,\E')] \leq \left( \frac{1}{8} - \epsilon \right) \norm[\pi_{\F}(E)]. $$
	In other words, the probability that $\E' \in U.\E$ does not satisfy the above inequality is at least $1-1/\norm[U]$, which can be chosen as large as we want.
\label{SpPAbd}
\end{corollary}
\begin{proof}
	As $U$ is a $2$-group, for any $1 \neq [u] \in U$, $\{ 1, [u] \}$ is a subgroup, to which we can apply Theorem \ref{SpPLBd} with the corresponding constants denoted by $c_j([u])$. Taking
	$$ c_1 = \sideset{}{_{[u] \in U}} \sup c_1([u]), \quad c_2 = \left( 2^{3d_{\F}} \sideset{}{_{1 \neq [u] \in U}} \max \Dis(\{ 1, [u] \}) \right)^{\delta} \sideset{}{_{[u] \in U}} \sup c_2([u]), $$
	Lemma \ref{DisComp} implies that we can apply Theorem \ref{SpPLBd} to $\{ 1, [u] \}$ for any $1 \neq [u] \in U$ and to any $\E' \in U.\E$. Hence at most one of $\E', [u].\E'$ satisfies the inequality. If one $\E'$ does satisfy the inequality, then all other $[u].\E'$ with $[u] \neq 1$ fails the inequality.
\end{proof}

	\subsection{Algebraic Lemmas}
	
	We shall need some algebraic facts to reduce our main estimation to the Chebotarev Theorem for some appropriate number field. Any finitely generated $U < \F^{\times}/(\F^{\times})^2$ is automatically a finite dimensional vector space over $\mathbb{F}_2 = \Z/2\Z$. Let $\{ u_1, \cdots, u_r \} \subset \F^{\times}$ whose image in $\F^{\times} / (\F^{\times})^2$ form a basis of $U$. Define for any integer $1 \leq k \leq r$
	$$ \F_k := \F[\sqrt{u_k}], \quad \E_k := \F[\sqrt{u_1}, \cdots, \sqrt{u_k}]. $$
\begin{lemma}
\noindent (1) $[\E_k : \F] = 2^k$.

\noindent (2) The obvious homomorphism between Galois groups
	$$ \mathrm{Gal}(\E_k / \F) \to \sideset{}{_{j=1}^k} \prod \mathrm{Gal}(\F_j / \F) \simeq (\Z/2\Z)^k $$
	is an isomorphism.
	
\noindent (3) All quadratic sub-extension of $\F$ in $\E_k$ are of the form $\F[\sqrt{u}]$, where $u$ is the product of the elements in a subset of $\{ u_1, \cdots, u_k \}$.
\label{GalId}
\end{lemma}
\begin{proof}
	One (simple but fastidious) proof works by induction on $k$. We leave the details to the reader.
\end{proof}

\begin{lemma}
	Let $\vp \nmid 2$ be a prime ideal of $\vo_{\F}$. Let $u \in \vo - \vp$ which is not a square in $\F^{\times}$.

\noindent (1) If $u$ is a square modulo $\vp$, then $\vp$ is split in $\F[\sqrt{u}$. The Artin symbol $(\vp, \F[\sqrt{u}]/\F)$ is the identity element in $\mathrm{Gal}(\F[\sqrt{u}]/\F)$.

\noindent (2) If $u$ is not a square modulo $\vp$, then $\vp$ is inert in $\F[\sqrt{u}]$. The Artin symbol $(\vp, \F[\sqrt{u}]/\F)$ is the non-trivial element in $\mathrm{Gal}(\F[\sqrt{u}]/\F)$.
\end{lemma}
\begin{proof}
	This is how the Artin symbol generalizes the Legendre symbol. We omit the proof.
\end{proof}

	\subsection{Proof of the Main Estimation}

\begin{definition}
\noindent (1) Let $S_{\fin}$ be the set of prime ideals of $\vo_{\F}$. The \emph{Legendre symbol} for $\F$ is defined as
	$$ \mathrm{Et}_2(\F) \times S_{\fin} \to \{ 0, \pm 1 \}, \quad (\E,\vp) \mapsto \Legendre{\E}{\vp} := \left\{ \begin{matrix} 0 & \text{if } \E \text{ is ramified in } \E, \\ 1 & \text{if } \E \text{ is split in } \E, \\ -1 & \text{if } \E \text{ is inert in } \E. \end{matrix} \right. $$

\noindent (2) When $\vp$ is fixed, the above map factors through the natural (surjective) homomorphism $\mathrm{Et}_2(\F) \to \mathrm{Et}_2(\F_{\vp})$, denoted by
	$$ \Legendre{\cdot}{\vp}: \mathrm{Et}_2(\F_{\vp}) \to \{ 0, \pm 1 \}, \quad \E_{\vp} \mapsto \Legendre{\E_{\vp}}{\vp}. $$
	
\noindent (3) Write $\E_0 = \F \times \F$ to be the split quadratic extension of $\F$. For any $[u] \in \F^{\times}/(\F^{\times})^2$ or $u \in \F_{\vp}^{\times} / (\F_{\vp}^{\times})^2$ we write
	$$ \Legendre{[u]}{\vp} := \Legendre{[u].\E_0}{\vp}. $$
\label{LegendreNF}
\end{definition}
\begin{lemma}
\noindent (1) We have an explicit description
	$$ \Legendre{[u]}{\vp} = \left\{ \begin{matrix} 0 & \text{if there exists a representative } u \in \vp\vo_{\vp}-\vp^2\vo_{\vp} \\ 1 & \text{if there exists a representative } u \in (\vo_{\vp}^{\times})^2 \\ -1 & \text{if there exists a representative } u \in \vo_{\vp}^{\times} - (\vo_{\vp}^{\times})^2 \end{matrix} \right. . $$
	
\noindent (2) Let $U < \F^{\times}/(\F^{\times})^2$ be finite(ly generated) and recall $S_U$ in (\ref{UInv}). Then for any $\vp \nmid 2, \vp \notin S_U$, we have
	$$ \Legendre{[u].\E_{\vp}}{\vp} = \Legendre{[u]}{\vp} \cdot \Legendre{\E_{\vp}}{\vp}, \quad \forall [u] \in U, \E_{\vp} \in \mathrm{Et}_2(\F_{\vp}). $$
\label{MultLSymbol}
\end{lemma}
\begin{proof}
\noindent (1) This follows from the fact that a representative $u \in \vo_{\vp} - \vp^2 \vo_{\vp}$ always exists, and the definition.

\noindent (2) Under the assumption we have
	$$ \Legendre{[u]}{\vp} \neq 0. $$
	We verify the equality by the explicit description of $\mathrm{Et}_2(\F_{\vp})$ in (\ref{OddEt2}).
\end{proof}

\begin{corollary}
	Let $\vp \nmid 2, \vp \notin S_U$ and $\E \in \mathrm{Et}_2(\F)$. If $\vp$ is ramified in $\E$, then it is ramified in $[u].\E$ for every $[u] \in U$, hence
	$$ \sideset{}{_{[u] \in U}} \sum \Legendre{[u].\E}{\vp} = 0 = \Legendre{\E}{\vp}. $$
	Otherwise, $\vp$ is not ramified in any $[u].\E$ and we have
	$$ \sideset{}{_{[u] \in U}} \sum \Legendre{[u].\E}{\vp} = \left\{ \begin{matrix} \norm[U] \cdot \Legendre{\E}{\vp} & \text{if } \vp \text{ is split in } [u].\E_0 \text{ for every } [u] \in U \\ 0 & \text{otherwise} \end{matrix} \right. . $$
\label{OrthoV}
\end{corollary}
\begin{proof}
	Both assertions follow easily from the previous lemma and
	$$ \sideset{}{_{[u] \in U}} \sum \Legendre{[u]}{\vp} = \left\{ \begin{matrix} \norm[U] & \text{if } \Legendre{\cdot}{\vp} \text{ is the trivial character of } U \\ 0 & \text{otherwise} \end{matrix} \right. . $$
	In fact, $\Legendre{\cdot}{\vp}$ coincides with the trivial character of $U$ if and only if the image $U_{\vp}$ of $U$ under the natural homomorphism $\F^{\times} / (\F^{\times})^2 \to \F_{\vp}^{\times} / (\F_{\vp}^{\times})^2$ is trivial, which is equivalent to that $\vp$ is split in every $[u].\E_0$.
\end{proof}

\begin{lemma}
	For any $\epsilon > 0$, there is a constant $c_1>0$ depending only on $U$ and $\epsilon$ such that
	$$ \extnorm{ \sideset{}{_{[u] \in U}} \bigcap \mathrm{Sp}_{\F}(E, [u].\E_0) } < \left( \frac{1}{\norm[U]} + \epsilon \right) \norm[\pi_{\F}(E)], \quad \forall E \geq c_1. $$
\label{JtSp}
\end{lemma}
\begin{proof}
	For any $\vp \notin S_U, \vp \nmid 2$, $\vp$ is unramified in $\F_j$ for every $1 \leq j \leq r$. Hence $\vp$ is unramified in $\E_r$. By definition, the image of $(\vp, \E_r / \F)$ under the isomorphism of Galois groups in Lemma \ref{GalId} (2) is the product of $(\vp, \F_j / \F)$. From Lemma \ref{MultLSymbol} (2), we deduce that
	$$ \vp \in \sideset{}{_{[u] \in U}} \bigcap \mathrm{Sp}_{\F}(E, [u].\E_0) \quad \Leftrightarrow \quad (\vp, \F_j/\F) = 1, \forall 1 \leq j \leq r \quad \Leftrightarrow \quad (\vp, \E_r/\F) = 1. $$
	Applying any effective version of the Chebotarev Theorem \cite{LMO79} to $\E_r / \F$, we conclude.
\end{proof}

\begin{proof}[Proof of Theorem \ref{SpPLBd}]
	First notice that for any $\E \in \mathrm{Et}_2(\F)$ we can write
	$$ \norm[\mathrm{Sp}_{\F}(E,\E)] = \frac{1}{2} \norm[\pi_{\F}(E)] + \frac{1}{2} N(E,\E) + O(\log \Dis(\E)), \quad N(E,\E) := \sideset{}{_{\Nr(\vp) \leq E}} \sum \Legendre{\E}{\vp}. $$
	By Corollary \ref{OrthoV} and Lemma \ref{JtSp}, we have
\begin{align*}
	\sideset{}{_{[u] \in U}} \sum N(E,[u].\E) &= \frac{1}{[U]} \sideset{}{_{\substack{ \vp \mid 2 \text{ or } \vp \in S_U \\ \Nr(\vp) < E}}} \sum \sideset{}{_{[u] \in U}} \sum \Legendre{[u].\E}{\vp} + \frac{1}{[U]} \sideset{}{_{\substack{ \vp \nmid 2 \text{ and } \vp \notin S_U \\ \Nr(\vp) < E}}} \sum \sideset{}{_{[u] \in U}} \sum \Legendre{[u].\E}{\vp} \\
	&\geq - \norm[S_U]-d_{\F} - \extnorm{ \left\{ \vp \middle| \vp \nmid 2, \vp \notin S_U, \vp \in \mathrm{Sp}_{\F}(E,[u].\E) \text{ for all } [u] \in U \right\} } \\
	&\geq -O(\norm[S_U] + d_{\F}) - \left( \frac{1}{\norm[U]} + \epsilon \right) \norm[\pi_{\F}(E)],
\end{align*}
	which clearly implies the desired bound.
\end{proof}

\bibliographystyle{acm}
	
\bibliography{mathbib}

\address{\quad \\ Han WU \\ MA C3 604 \\ EPFL SB MATHGEOM TAN \\ CH-1015, Lausanne \\ Switzerland \\ wuhan1121@yahoo.com}
	
\end{document}